\documentclass[12pt]{amsart}
 
\pdfoutput=1
\usepackage[margin=1in]{geometry}
\usepackage{amsmath,amsthm,amssymb,amsrefs,bbm,color,esint,esvect,float,graphicx,mathrsfs}

\usepackage[foot]{amsaddr}

\usepackage{euscript,latexsym}
\usepackage{accents}

\newcommand{\norm}[1]{\ensuremath{\left\|#1\right\|}} 
\newcommand{\abs}[1]{\ensuremath{\left\vert#1\right\vert}}
\newcommand{\ip}[2]{\ensuremath{\left\langle#1,#2\right\rangle}} 


\newcommand{\C}{\mathbb{C}}

\newcommand{\R}{\mathbb{R}}

\newcommand{\PW}{\mathcal{P}\mathcal{W}}
\newcommand{\FF}{\mathcal{F}}
 
 \newtheorem{thm}{Theorem}[section]
 
 \newtheorem{cor}[thm]{Corollary}
 \newtheorem{prop}[thm]{Proposition}
 \newtheorem{rem}[thm]{Remark}

 \numberwithin{equation}{section}
 
\newenvironment{theorem}[2][Theorem]{\begin{trivlist}
\item[\hskip \labelsep {\bfseries #1}\hskip \labelsep {\bfseries #2.}]}{\end{trivlist}}

\theoremstyle{definition}

\begin{document}

\title[Riesz-Kolmogorov type compactness criteria with applications]{Riesz-Kolmogorov type compactness criteria in function spaces with applications}
\author{Mishko Mitkovski}
\email{mmitkov@clemson.edu}
\author{Cody B. Stockdale}
\address{
 School of Mathematical and Statistical Sciences; 
 Clemson University; 
 Clemson, SC, 29631, USA}
\email{cbstock@clemson.edu}
\author{Nathan A. Wagner}
\email{nathanawagner@wustl.edu}
\author{Brett D. Wick}
\address{
 Department of Mathematics and Statistics; 
 Washington University in St. Louis; 
 St. Louis, MO, 63130 USA}
\email{bwick@wustl.edu}

 \maketitle

\begin{abstract}
We present forms of the classical Riesz-Kolmogorov theorem for compactness that are applicable in a wide variety of settings. 
In particular, our theorems apply to classify the precompact subsets of the Lebesgue space $L^2$, Paley-Wiener spaces, weighted Bargmann-Fock spaces, and a scale of weighted Besov-Sobolev spaces of holomorphic functions that includes weighted Bergman spaces of general domains as well as the Hardy space and the Dirichlet space. We apply the compactness criteria to characterize the compact Toeplitz operators on the Bergman space, deduce the compactness of Hankel operators on the Hardy space, and obtain general umbrella theorems.

\bigskip
\noindent \textbf{Keywords.}  Compactness, framed spaces, spaces of holomorphic functions, Toeplitz operators, Hankel operators.

\smallskip
\noindent \textbf{MSC.}  Primary 46B50; Secondary 46E15, 46E30, 47B35.

\smallskip
\noindent \textbf{Statements and Declarations.}  The authors state that there are no conflicts of interest and that there are no data sets associated with this research.
\end{abstract}

\section{Introduction}

The Riesz-Kolmogorov theorem is a fundamental result in analysis that characterizes the precompact subsets of $L^p(\mathbb{R}^n)$. The statement is as follows.
\begin{theorem}{A}
Let $p \in [1,\infty)$. A set $\mathcal{F}\subseteq L^p(\mathbb{R}^n)$ is precompact if and only if 
$$
    \lim_{R\rightarrow \infty} \sup_{f \in \mathcal{F}} \int_{|x|>R}|f(x)|^p\,dx=0
$$
and
$$
    \lim_{|h|\rightarrow 0}\sup_{f \in \mathcal{F}}\int_{\mathbb{R}^n}|f(x-h)-f(x)|^p\,dx=0.
$$
\end{theorem}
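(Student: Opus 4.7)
The plan is to prove the two directions separately. For necessity (precompactness implies the two uniform limits), suppose $\mathcal{F}$ is precompact in $L^p(\mathbb{R}^n)$, fix $\varepsilon>0$, and extract a finite $\varepsilon$-net $\{f_1,\dots,f_N\}\subseteq \mathcal{F}$. Each $f_j$ is a single $L^p$ function, so density of $C_c(\mathbb{R}^n)$ in $L^p$ supplies $R_j$ with $\int_{|x|>R_j}|f_j|^p\,dx<\varepsilon^p$, while continuity of translation in $L^p$ supplies $\delta_j$ with $\|f_j(\,\cdot\,-h)-f_j\|_{L^p}<\varepsilon$ for all $|h|\le\delta_j$. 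Setting $R=\max_j R_j$, $\delta=\min_j \delta_j$, and applying the triangle inequality through the nearest $f_j$ (using translation invariance of Lebesgue measure in the second estimate) propagates both bounds uniformly over $\mathcal{F}$.

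For the sufficiency direction, which carries the bulk of the work, I would verify that $\mathcal{F}$ is totally bounded in $L^p(\mathbb{R}^n)$; completeness of $L^p$ then yields precompactness. Let $(\phi_\delta)_{\delta>0}\subseteq C_c^\infty(\mathbb{R}^n)$ be a standard mollifier with $\operatorname{supp}\phi_\delta\subseteq B_\delta(0)$. Minkowski's integral inequality gives the uniform-in-$f$ estimate
$$
    \|\phi_\delta*f-f\|_{L^p(\mathbb{R}^n)}\le \sup_{|h|\le \delta}\|f(\,\cdot\,-h)-f(\,\cdot\,)\|_{L^p(\mathbb{R}^n)},
$$
so by the equicontinuity hypothesis $\phi_\delta*f\to f$ in $L^p$ uniformly over $f\in\mathcal{F}$ as $\delta\to 0$. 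For each fixed $\delta>0$, Hölder's inequality shows that $\{\phi_\delta*f\}_{f\in\mathcal{F}}$ is uniformly bounded and uniformly equicontinuous on any compact set, granted the underlying $L^p$-boundedness of $\mathcal{F}$ (implicit in the stated hypotheses). Given $\varepsilon>0$, I would then choose $R$ large by the tightness condition and $\delta$ small by the mollifier approximation, and apply the Arzelà--Ascoli theorem to $\{(\phi_\delta*f)|_{\overline{B_R}}\}$ to extract a finite $\varepsilon$-net in $C(\overline{B_R})$, which transfers to a $\varepsilon|B_R|^{1/p}$-net in $L^p(B_R)$. A three-term triangle inequality --- splitting $\|f-g\|_{L^p(\mathbb{R}^n)}$ into tail, mollifier, and compact-ball pieces --- then produces a finite $O(\varepsilon)$-net for $\mathcal{F}$ in $L^p(\mathbb{R}^n)$.

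The main technical obstacle is the uniformity bookkeeping in the sufficiency argument: the mollifier approximation, the Arzelà--Ascoli step, and the passage from $C(\overline{B_R})$ to $L^p(B_R)$ must each yield constants independent of $f\in\mathcal{F}$, so that the three errors can be balanced against a single $O(\varepsilon)$ target. Once this uniform-in-$f$ control is in place, the total boundedness of $\mathcal{F}$, and hence its precompactness, follows routinely.
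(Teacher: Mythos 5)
Your argument is the classical direct proof of the Riesz--Kolmogorov theorem: necessity via a finite $\varepsilon$-net, sufficiency via mollification, Arzel\`a--Ascoli on $\overline{B_R}$, and total boundedness. That is a legitimate route, but it is not the one the paper indicates: the paper proves the abstract criterion of Mazur (Theorem B) for a sequence of compact operators $T_n$ converging strongly to the identity and derives Theorem A by applying it to the averaging operators $T_nf=f*V(B(0,\tfrac1n))^{-1}\chi_{B(0,\frac1n)}$, citing \cite{HoHM2019} and \cite{B1987}. The two approaches encode the same mollification idea; the abstract version buys reusability (it is the engine behind the paper's Theorems 1.1--1.5), while yours is self-contained and elementary.

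There is, however, one genuine gap. In the sufficiency direction you invoke ``the underlying $L^p$-boundedness of $\mathcal F$ (implicit in the stated hypotheses).'' As stated here, Theorem A carries no boundedness hypothesis, and your H\"older and Arzel\`a--Ascoli steps really do need $\sup_{f\in\mathcal F}\|f\|_{L^p}<\infty$ (for instance to get the uniform pointwise bound $\|\phi_\delta*f\|_{L^\infty}\le\|\phi_\delta\|_{L^{p'}}\|f\|_{L^p}$ and the matching equicontinuity estimate). That boundedness \emph{is} implied by the two conditions, but this is precisely the nontrivial improvement of \cite{HoHM2019} that the paper flags in the sentence following Theorem A; it does not follow by inspection. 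You must either assume boundedness (the classical formulation) or prove it, e.g.\ by fixing $R$ and $\delta$ corresponding to $\varepsilon=1$, choosing $h$ with $|h|\le\delta$ and $N$ with $N|h|>2R$, and writing $\|f\|_{L^p(B_R)}\le\sum_{k=1}^{N}\|f(\cdot-kh)-f(\cdot-(k-1)h)\|_{L^p}+\|f(\cdot-Nh)\|_{L^p(B_R)}\le N+1$, since the last term is a tail term; combined with the tail bound on $\{|x|>R\}$ this gives a uniform bound on $\|f\|_{L^p}$. With that supplied, the rest of your uniformity bookkeeping goes through.
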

\noindent Theorem A is classically presented with the additional condition of $\mathcal{F}$ being a bounded subset of $L^p(\mathbb{R}^n)$, however this condition is redundant as it is implied by the other two conditions of the theorem, see \cite{HoHM2019}.

The Riesz-Kolmogorov criterion is named after the work of Kolmogorov and Riesz from \cite{K1931} and \cite{R1933}, respectively. In \cite{K1931}, Kolmogorov proved a version of Theorem A in the case when $1<p<\infty$ and all functions in $\mathcal{F}$ are supported on a common bounded set. Riesz independently discovered a version of Theorem A in \cite{R1933} in the case $1\leq p < \infty$. See \cite{HoH2010} for a more detailed historical accounting of this topic.

The Riesz-Kolmogorov characterization has been adapted to handle many other situations. For example, Fr\'echet proved a version of the theorem that includes arbitrary $p>0$ in \cite{F1937}, Phillips characterized precompact subsets of $L^p$ with respect to arbitrary measure spaces in \cite{P1940}, Weil obtained a version of the theorem in the setting of locally compact groups in \cite{W1940}, and Takahashi proved a version of the theorem for Orlicz spaces in \cite{T1934}. There are also versions of the precompactness criterion for weighted settings in \cites{CC2013, GZ2020} and matrix weighted settings in \cite{LYZ2021}. See \cites{AU2020,B2017,CDLW2019,DLMWY2018,F1982,GW1970,GM2015,GM2014,GR2016,IK2009,K1999,M1983,R2009} for further references.

As shown in \cite{HoHM2019}*{Theorem 4} or \cite{B1987}*{p. 466} the Riesz-Kolmogorov theorem can be proved using the following more abstract compactness criterion of Mazur.

\begin{theorem}{B}
Let $\mathcal{X}$ be a Banach space and suppose that $\{T_n\}_{n=1}^{\infty}$ is a sequence of compact operators on $\mathcal{X}$ that converges to the identity in the strong operator topology; that is, $\lim_{n\rightarrow\infty}\|T_nf-f\|_{\mathcal{X}}=0$ for all $f \in \mathcal{X}$. A bounded set $\mathcal{F}\subseteq \mathcal{X}$ is precompact if and only if 
$$
    \lim_{n\rightarrow\infty}\sup_{f\in\mathcal{F}}\|T_nf -f\|_{\mathcal{X}}=0.
$$
\end{theorem}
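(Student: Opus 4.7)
The plan is to prove both implications directly using the definition of precompactness as total boundedness, with the two directions essentially interchanging the roles of the finite net and the operator approximation.

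For the easier ``if'' direction, suppose $\mathcal{F}$ is bounded and $\sup_{f \in \mathcal{F}}\|T_n f - f\|_{\mathcal{X}} \to 0$. Given $\varepsilon > 0$, I would first choose $N$ so large that $\sup_{f \in \mathcal{F}}\|T_N f - f\|_{\mathcal{X}} < \varepsilon/3$. Because $T_N$ is compact and $\mathcal{F}$ is bounded, the image $T_N(\mathcal{F})$ is precompact in $\mathcal{X}$, hence totally bounded, so it admits a finite $\varepsilon/3$-net of the form $\{T_N f_1, \dots, T_N f_k\}$ with $f_i \in \mathcal{F}$. For an arbitrary $f \in \mathcal{F}$, pick $i$ with $\|T_N f - T_N f_i\|_{\mathcal{X}} < \varepsilon/3$, then a triangle inequality through $T_N f$ and $T_N f_i$ gives $\|f - f_i\|_{\mathcal{X}} < \varepsilon$. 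Thus $\{f_1,\dots,f_k\}$ is a finite $\varepsilon$-net for $\mathcal{F}$, which means $\mathcal{F}$ is totally bounded and therefore precompact in the complete space $\mathcal{X}$.

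For the ``only if'' direction, assume $\mathcal{F}$ is precompact. The first thing I need is uniform boundedness of the $T_n$: since $T_n f \to f$ for every $f \in \mathcal{X}$, the sequence $\{T_n f\}$ is bounded for each $f$, so the Banach-Steinhaus theorem gives $M := \sup_n \|T_n\| < \infty$. Now given $\varepsilon > 0$, total boundedness of $\mathcal{F}$ produces a finite $\varepsilon$-net $\{f_1,\dots,f_k\} \subseteq \mathcal{F}$. Choose $N$ large enough that $\|T_n f_i - f_i\|_{\mathcal{X}} < \varepsilon$ for all $i \in \{1,\dots,k\}$ and all $n \ge N$. For an arbitrary $f \in \mathcal{F}$, select $f_i$ with $\|f - f_i\|_{\mathcal{X}} < \varepsilon$ and apply the triangle inequality
$$
\|T_n f - f\|_{\mathcal{X}} \le \|T_n(f-f_i)\|_{\mathcal{X}} + \|T_n f_i - f_i\|_{\mathcal{X}} + \|f_i - f\|_{\mathcal{X}} \le (M+2)\varepsilon,
$$
uniformly in $f \in \mathcal{F}$, which yields the desired convergence.

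The only genuinely nontrivial ingredient is the uniform bound $\sup_n \|T_n\| < \infty$ needed in the second direction; once this is in hand, both implications reduce to the standard trick of combining an $\varepsilon$-net with a single compact approximant, and the compactness of each $T_n$ is used only in the ``if'' direction to manufacture the net from $T_N(\mathcal{F})$. I expect no substantial obstacle beyond keeping track of the three constants in the triangle inequalities.
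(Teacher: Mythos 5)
Your proposal is correct and follows essentially the same route as the paper: Banach--Steinhaus for the uniform bound plus a three-term triangle inequality through an $\varepsilon$-net in the ``only if'' direction, and a single compact approximant $T_N$ whose image supplies the finite net in the ``if'' direction (you merely spell out the last step, which the paper compresses into ``the precompactness of $\mathcal{F}$ follows''). No gaps.
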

In \cite{P1940}*{Theorem 3.7}, Phillips proved a very similar theorem and applied it to characterize the precompact subsets of $L^p$ with respect to arbitrary measure spaces. In \cite{S1957}, Sudakov showed that if at least one of the operators $T_n$ does not have $1$ as an eigenvalue, then the boundedness condition on $\mathcal{F}$ in Theorem B is not needed (see also \cite{HoHM2019}*{p. 90--91}). 

\begin{proof}[Proof of Theorem B] 
First suppose that $\mathcal{F}$ is precompact. By the uniform boundedness principle, $B:=\sup_{n\in\mathbb{N}}\|T_n\|_{\mathcal{X}\rightarrow\mathcal{X}}<\infty$. Let $\varepsilon>0$. Since $\mathcal{F}$ is precompact, there exists a finite subset $\{f_1,\ldots,f_K\}\subseteq \mathcal{F}$ such that for each $f \in \mathcal{F}$ there exists $1\leq j\leq K$ with $\|f_j-f\|_{\mathcal{X}}<\frac{\varepsilon}{3}\min(B,1)$. Choose $N$ so that $\|T_nf_j-f_j\|_{\mathcal{X}}<\frac{\varepsilon}{3}$ for all $n \ge N$ and all $1\leq j\leq K$. For $f \in \mathcal{F}$ and $n\ge N$, let $1\leq j\leq K$ be such that $\|f_j-f\|_{\mathcal{X}}<\frac{\varepsilon}{3}$ and note
$$
    \|T_nf-f\|_{\mathcal{X}} \leq \|T_n(f-f_j)\|_{\mathcal{X}}+\|T_nf_j-f_j\|_{\mathcal{X}}+\|f_j-f\|_{\mathcal{X}} < \varepsilon.
$$

Assuming the uniform strong operator topology convergence of $T_n$ to the identity, we have that for any $\varepsilon>0$ there exists $N \in \mathbb{N}$ such that $\text{dist}(f, T_N\mathcal{F})< \varepsilon$ for all $f \in \mathcal{F}$. Since $\mathcal{F}$ is bounded and $T_N$ is compact, $T_N\mathcal{F}$ is precompact. The precompactness of $\mathcal{F}$ follows.
\end{proof}

We observe that a slight strengthening of Mazur's Theorem B can be obtained in a Hilbert space setting by relaxing the norm conditions involving $\|T_nf-f\|_{\mathcal{X}}$ to quadratic form conditions. This result is likely already known, but we were unable to find a reference. 
\begin{thm}\label{HilbertRK}
Let $\mathcal{H}$ be a Hilbert space and suppose that $\{T_n\}_{n=1}^{\infty}$ is a sequence of compact operators on $\mathcal{H}$ such that $\lim_{n\rightarrow\infty}\langle T_nf-f,f\rangle_{\mathcal{H}}=0$ for all $f \in \mathcal{\mathcal{H}}$. A bounded set $\mathcal{F}\subseteq \mathcal{H}$ is precompact if and only if 
$$
    \lim_{n\rightarrow\infty}\sup_{f\in\mathcal{F}}\langle T_nf -f, f\rangle_{\mathcal{H}}=0.
$$
\end{thm}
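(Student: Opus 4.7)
The plan is to mirror the two-step strategy of Theorem B, with boundedness of $\mathcal{F}$ and weak sequential compactness of bounded sets in $\mathcal{H}$ compensating for the fact that we only have quadratic form control of $T_n f - f$ rather than norm control.

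For the forward direction, I would first upgrade the pointwise hypothesis $\langle T_n f - f, f\rangle_{\mathcal{H}} \to 0$ to weak operator convergence $T_n \to I$ via the polarization identity (valid in a complex Hilbert space); the uniform boundedness principle then yields $B := \sup_n \|T_n\|_{\mathcal{H} \to \mathcal{H}} < \infty$. Given precompact $\mathcal{F}$ with $M := \sup_{g \in \mathcal{F}}\|g\|_{\mathcal{H}}$, I would fix $\varepsilon>0$, select a finite $\varepsilon$-net $\{f_1,\ldots,f_K\} \subseteq \mathcal{F}$, and use the identity
$$\langle T_n f - f, f\rangle_{\mathcal{H}} - \langle T_n f_j - f_j, f_j\rangle_{\mathcal{H}} = \langle T_n(f-f_j), f\rangle_{\mathcal{H}} + \langle T_n f_j, f-f_j\rangle_{\mathcal{H}} - \langle f - f_j, f\rangle_{\mathcal{H}} - \langle f_j, f-f_j\rangle_{\mathcal{H}}$$
together with $\|T_n\|_{\mathcal{H}\to\mathcal{H}} \le B$ to bound the difference by $2M(B+1)\varepsilon$ whenever $\|f-f_j\|_{\mathcal{H}}<\varepsilon$. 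Choosing $n$ large enough to control the $K$ pointwise quantities simultaneously then yields uniform convergence over $\mathcal{F}$, exactly as in the proof of Theorem B.

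The substantive direction is the converse. Given a sequence $\{f_k\} \subseteq \mathcal{F}$, boundedness lets me extract, via Banach--Alaoglu, a weakly convergent subsequence $f_{k_j} \rightharpoonup f$ in $\mathcal{H}$. The identity
$$\|f_{k_j}\|_{\mathcal{H}}^2 - \langle T_n f_{k_j}, f_{k_j}\rangle_{\mathcal{H}} = -\langle T_n f_{k_j} - f_{k_j}, f_{k_j}\rangle_{\mathcal{H}}$$
combined with the uniform hypothesis gives $\bigl|\|f_{k_j}\|_{\mathcal{H}}^2 - \langle T_n f_{k_j}, f_{k_j}\rangle_{\mathcal{H}}\bigr| \le \delta_n$ for all $j$, where $\delta_n := \sup_{g\in\mathcal{F}}|\langle T_ng-g,g\rangle_{\mathcal{H}}| \to 0$. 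For any fixed $n$, compactness of $T_n$ forces $T_n f_{k_j} \to T_n f$ in norm, which combined with $f_{k_j} \rightharpoonup f$ yields $\langle T_n f_{k_j}, f_{k_j}\rangle_{\mathcal{H}} \to \langle T_n f, f\rangle_{\mathcal{H}}$ as $j \to \infty$. Taking $\limsup$ and $\liminf$ over $j$ and then sending $n \to \infty$, using the pointwise hypothesis $\langle T_n f, f\rangle_{\mathcal{H}} \to \|f\|_{\mathcal{H}}^2$, produces $\lim_j \|f_{k_j}\|_{\mathcal{H}} = \|f\|_{\mathcal{H}}$; together with weak convergence, this forces $f_{k_j} \to f$ strongly in $\mathcal{H}$.

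The main point of care is the order of limits in $j$ and $n$: the uniformity of the bound $\delta_n$ in $j$ is precisely what permits first taking the $j$-limit against $\langle T_n f, f\rangle_{\mathcal{H}}$ and only then sending $n\to\infty$. Aside from that, the one genuinely nontrivial step is the polarization argument used in the forward direction to convert the quadratic-form hypothesis into weak operator convergence, which is what enables the application of the uniform boundedness principle.
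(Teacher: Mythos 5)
Your proposal is correct, and both directions rest on the same key ingredients as the paper's proof: polarization plus the uniform boundedness principle to get $\sup_n\|T_n\|_{\mathcal{H}\to\mathcal{H}}<\infty$ in the forward implication, and weak subsequential compactness of bounded sets together with the compactness of a single $T_N$ in the converse. The packaging differs in both halves, though. In the forward direction you argue directly with a finite $\varepsilon$-net and the algebraic identity for the difference of the two quadratic forms, where the paper argues by contradiction along a convergent subsequence; these are interchangeable, and your bound $2M(B+1)\varepsilon$ checks out. In the converse, the more substantive difference is how you close: you first prove $\|f_{k_j}\|_{\mathcal{H}}\to\|f\|_{\mathcal{H}}$ by the two-limit argument (the $j$-limit against $\langle T_nf,f\rangle_{\mathcal{H}}$ for fixed $n$, then $n\to\infty$ using both the uniform bound $\delta_n$ and the pointwise hypothesis at the weak limit $f$), and then invoke the Hilbert-space fact that weak convergence plus convergence of norms implies strong convergence. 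The paper instead expands $\|f_{j_k}-f\|_{\mathcal{H}}^2$ directly and estimates the resulting inner products term by term for a single well-chosen $N$. Your route is slightly more conceptual and makes the role of the Radon--Riesz property explicit; the paper's is more hands-on but avoids the double limit. Your remark on the order of limits is exactly the right point of care, and your implicit use of the complex polarization identity matches the paper's (both arguments tacitly assume $\mathcal{H}$ is a complex Hilbert space).
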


The usual way to derive the Riesz-Kolmogorov theorem when all functions in $\mathcal{F}$ are supported on a common bounded set from Mazur's Theorem B is to use the averaging operators 
$$
    T_nf(x)=\frac{1}{V(B(x,\frac{1}{n}))}\int_{B(x,\frac{1}{n})}f(y)\,dy= f * \frac{1}{V(B(0,\frac{1}{n}))}\chi_{B(0,\frac{1}{n})}(x),
$$ 
where $V$ denotes the Lebesgue measure, see \cites{HoHM2019,S1957}. Loosely speaking, the Riesz-Kolmogorov theorem says that for a set $\mathcal{F}$ to be compact, all of its elements need to have uniformly small tails on the spatial side (first condition) and on the frequency side (second condition). Therefore, to use Mazur's theorem to derive a compactness criterion of Riesz-Kolmogorov type, one must use operators $T_n$ that ``truncate" in both of the spatial and frequency domains. The simplest application of this idea gives the following theorem. 

\begin{theorem}{C}
A bounded set $\mathcal{F}\subseteq L^2(\mathbb{R}^n)$ is precompact if and only if
$$
    \lim_{R\rightarrow \infty} \sup_{f \in \mathcal{F}} \int_{|x|>R}|f(x)|^2\,dx=0 \quad  \text{and} \quad
    \lim_{R\rightarrow \infty} \sup_{f \in \mathcal{F}} \int_{|\xi|>R}|\hat{f}(\xi)|^2\,d\xi=0.
$$
\end{theorem}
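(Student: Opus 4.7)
The plan is to apply Mazur's Theorem~B with the family of operators
\[
    T_n f = P_n Q_n f, \qquad P_n f(x) = \chi_{B(0,n)}(x) f(x), \qquad \widehat{Q_n f}(\xi) = \chi_{B(0,n)}(\xi)\hat{f}(\xi),
\]
where $P_n$ truncates in the spatial variable and $Q_n$ in the frequency variable. This realizes the ``truncate in both space and frequency'' heuristic preceding Theorem~C in the text.

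First, I would verify that each $T_n$ is compact on $L^2(\mathbb{R}^n)$. A direct computation identifies $T_n$ as the integral operator with kernel $\chi_{B(0,n)}(x) K_n(x-y)$, where $K_n$ is the inverse Fourier transform of $\chi_{B(0,n)}$ and belongs to $L^2(\mathbb{R}^n)$ by Plancherel. Fubini then gives
\[
    \int_{\mathbb{R}^n}\int_{\mathbb{R}^n} \bigl|\chi_{B(0,n)}(x) K_n(x-y)\bigr|^2 \, dx \, dy = |B(0,n)|\,\|K_n\|_{L^2}^2 < \infty,
\]
so $T_n$ is Hilbert--Schmidt, and hence compact. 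This is the main technical point; once established, strong convergence $T_n f \to f$ on $L^2(\mathbb{R}^n)$ follows from the triangle inequality and the contraction $\|P_n\|\le 1$ via
\[
    \|T_n f - f\|_{L^2} \le \|P_n(Q_n f - f)\|_{L^2} + \|P_n f - f\|_{L^2} \le \|Q_n f - f\|_{L^2} + \|P_n f - f\|_{L^2},
\]
with each summand tending to zero by Plancherel and dominated convergence.

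For the sufficiency direction of Theorem~C, the same inequality taken uniformly over $\mathcal{F}$ yields
\[
    \sup_{f \in \mathcal{F}} \|T_n f - f\|_{L^2} \le \sup_{f \in \mathcal{F}} \left(\int_{|\xi|>n}|\hat{f}(\xi)|^2 \, d\xi\right)^{1/2} + \sup_{f \in \mathcal{F}} \left(\int_{|x|>n}|f(x)|^2 \, dx\right)^{1/2},
\]
which tends to zero under the two tail hypotheses, so Mazur's Theorem~B delivers precompactness of $\mathcal{F}$. For necessity, precompactness implies total boundedness of $\mathcal{F}$, so for any $\varepsilon>0$ one selects a finite $\varepsilon$-net $\{f_1,\ldots,f_K\}\subseteq \mathcal{F}$; choosing $R$ large enough that $\int_{|x|>R}|f_j|^2<\varepsilon^2$ for each $j$ and applying a standard $3\varepsilon$ estimate yields the spatial tail condition, and the same reasoning applied to $\{\hat{f}:f\in\mathcal{F}\}$ (also precompact since the Fourier transform is unitary on $L^2$) yields the frequency tail condition.
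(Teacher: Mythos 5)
Your proof is correct and follows exactly the route the paper intends: Theorem~C is presented there as ``the simplest application'' of Mazur's Theorem~B with operators $T_n$ that truncate in both the spatial and frequency domains, which is precisely your $T_n = P_nQ_n$, with compactness via the Hilbert--Schmidt kernel and necessity via the standard $\varepsilon$-net argument. No gaps.
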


Theorem C inspired the work of D\"orfler, Feichtinger, and Gr\"ochenig in \cite{DFG2002} where they derived compactness criteria for modulation spaces and co-orbit spaces using the short-time Fourier transform. Recall that the short-time Fourier transform $S_\phi: L^2(\R^n)\to L^2(\R^{2n})$ with a window function $\phi\in L^2(\R^n)$ is defined by $S_\phi f(a,b):=\ip{f}{\phi_{(a,b)}},$ where $\phi_{(a,b)}(x):=e^{2\pi i b x}\phi(x-a)$. The most classical window $\phi$ is the Gaussian window. The following is the compactness characterization in terms of the short-time Fourier transform 
obtained 
in~\cite{DFG2002}.  
\begin{theorem}{D}
A bounded set $\mathcal{F}\subseteq L^2(\mathbb{R}^n)$ is precompact if and only if
$$
    \lim_{R\rightarrow \infty} \sup_{f \in \mathcal{F}} \int_{\R^{2n}\setminus [-R,R]^{2n}}|S_\phi f(a,b)|^2\,dadb=0.    
$$
\end{theorem}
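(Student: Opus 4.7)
The plan is to apply Theorem~\ref{HilbertRK} to an explicit sequence of operators built from the STFT itself. After normalizing so that $\|\phi\|_{L^2(\R^n)}=1$ (which only rescales the tail integral and does not affect the conclusion), Moyal's orthogonality relation says $S_\phi:L^2(\R^n)\to L^2(\R^{2n})$ is an isometry, so $S_\phi^* S_\phi = I$. For $R>0$ let $M_R$ denote multiplication by $\chi_{[-R,R]^{2n}}$ on $L^2(\R^{2n})$, and set
$$T_R:=S_\phi^* M_R S_\phi.$$
This is the natural candidate because it is a joint time-frequency truncation: it localizes $S_\phi f$ to the box $[-R,R]^{2n}$ and then reconstructs using the inversion formula.

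The central quadratic-form identity falls out immediately from the isometry:
$$\langle (I-T_R)f,f\rangle=\langle(I-M_R)S_\phi f,S_\phi f\rangle=\int_{\R^{2n}\setminus[-R,R]^{2n}}|S_\phi f(a,b)|^2\,da\,db,$$
so the quantity in the theorem statement is exactly the uniform-in-$\mathcal{F}$ decay of $\langle(I-T_R)f,f\rangle$. For each fixed $f$, this expression tends to $0$ as $R\to\infty$ by dominated convergence, which verifies the pointwise quadratic-form convergence hypothesis required by Theorem~\ref{HilbertRK}.

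It then remains to show that each $T_R$ is compact. Since $T_R\ge 0$, I would do this by showing $T_R$ is trace class; its trace can be computed with any orthonormal basis $\{e_k\}$ of $L^2(\R^n)$, and Parseval applied to $\{e_k\}$ in the integrand gives
$$\sum_k|S_\phi e_k(a,b)|^2=\|\phi_{(a,b)}\|_{L^2}^2=1,$$
so by monotone convergence $\mathrm{Tr}(T_R)=\int_{[-R,R]^{2n}}1\,da\,db=(2R)^{2n}<\infty$. The theorem then follows directly from Theorem~\ref{HilbertRK}. The main potential obstacle is the compactness of $T_R$ (the STFT itself is very far from compact, so the truncation has to do all the work), but the trace calculation dispatches it cleanly with no assumption on the window beyond $\phi\in L^2(\R^n)$.
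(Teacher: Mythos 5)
Your proof is correct, but it takes a genuinely different route from the paper. The paper obtains Theorem~D as an immediate special case of Theorem~\ref{HilbertFrameRK1}: after normalizing $\|\phi\|_{L^2}=1$, Moyal's identity says the Gabor system $\{\phi_{(a,b)}\}$ is a continuous Parseval frame indexed by $(\R^{2n},da\,db)$, and the cubes $[-n,n]^{2n}$ form a finite-measure exhaustion. The proof of Theorem~\ref{HilbertFrameRK1} never produces a compact operator at all: the reverse direction extracts a weakly convergent subsequence from the bounded set $\mathcal{F}$ and shows directly, via dominated convergence over the finite-measure set $F_N$, that weak convergence plus the uniform tail condition forces norm convergence. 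You instead route through the quadratic-form Mazur criterion, Theorem~\ref{HilbertRK}, with the explicit truncations $T_R=S_\phi^*M_RS_\phi$ (which are exactly the operators $T_{F_n}f=\int_{F_n}\langle f,k_x\rangle k_x\,d\mu$ appearing in the paper's proof of Theorem~\ref{HilbertFrameRK2}, specialized to the Gabor frame), and you must then supply compactness of $T_R$; your trace computation $\operatorname{Tr}(T_R)=\int_{[-R,R]^{2n}}\|\phi_{(a,b)}\|^2\,da\,db=(2R)^{2n}$ does this cleanly and is correct, since a positive bounded operator with finite trace in one orthonormal basis is trace class. This is a nice self-contained alternative to the paper's citation of \cite{GM2021} in the proof of Theorem~\ref{HilbertFrameRK2}, and it in fact generalizes: for any continuous Parseval frame, $\operatorname{Tr}(T_{F})=\int_{F}\|k_x\|^2\,d\mu\leq\mu(F)$, so your argument gives an alternative proof of Theorem~\ref{HilbertFrameRK1} itself for bounded $\mathcal{F}$. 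Two small points you should make explicit: Theorem~\ref{HilbertRK} is stated for a sequence of operators, so one should take $R=n\to\infty$ and use the monotonicity of the tail integral in $R$ to pass back to the continuous limit; and the quantity $\langle T_Rf-f,f\rangle$ is nonpositive here, so, as in the paper's own proof of Theorem~\ref{HilbertFrameRK2}, the hypothesis of Theorem~\ref{HilbertRK} should be read with absolute values (which is how the paper's proof actually uses it). What the paper's route buys is economy (no compactness verification needed); what yours buys is an explicit, quantitative handle on the approximating operators.
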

\noindent Since that Gabor basis simultaneously respects both the spatial and the frequency behavior, only one uniform decay condition is needed in Theorem D.


Our first main result is a direct generalization of Theorem D. It turns out that one can replace the Gabor system $\{\phi_{(a,b)} : (a,b)\in \R^{2n}\}$ with any continuous Parseval frame. Recall that for a Hilbert space $\mathcal{H}$, a collection $\{k_x\}\subseteq \mathcal{H}$ indexed by a measure space $(X,\mu)$ is a continuous Parseval frame for $\mathcal{H}$ if 
$$
    \|f\|_{\mathcal{H}}^2= \int_X|\langle f,k_x\rangle_{\mathcal{H}}|^2\,d\mu(x)
$$
for each $f \in \mathcal{H}$. If $\{k_x\}_{x \in X}$ is a continuous Parseval frame for a Hilbert space $\mathcal{H}$, then  
$$
    f = \int_{X} \langle f,k_x\rangle_{\mathcal{H}} k_x\,d\mu(x)
$$
for each $f \in \mathcal{H}$. By an exhaustion for $X$ we mean a sequence of subsets of $X$, $\{F_n\}_{n=1}^{\infty}$, such that $F_n\subseteq F_{n+1}$ for each $n$ 
and $\bigcup_{n=1}^{\infty}F_n=X$.

\begin{thm}\label{HilbertFrameRK1}
Let $\mathcal{H}$ be a Hilbert space with a continuous Parseval frame $\{k_x\}$ indexed by a measure space $(X,\mu)$. Suppose that $X$ has an exhaustion $\{F_n\}_{n=1}^{\infty}$ such that $\mu(F_n)< \infty$ for all $n \in \mathbb{N}$. 
A bounded set $\mathcal{F}\subseteq \mathcal{H}$ is precompact if and only if 
$$
    \lim_{n\rightarrow\infty}\sup_{f\in\mathcal{F}}\int_{X\setminus F_n}|\langle f,k_x\rangle_{\mathcal{H}}|^2\,d\mu(x)=0.
$$
\end{thm}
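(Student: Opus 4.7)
\emph{Proof plan.} The plan is to apply Theorem~\ref{HilbertRK} to the sequence of frame truncation operators associated with the exhaustion $\{F_n\}$. First I would introduce the analysis operator $V\colon\mathcal{H}\to L^2(X,\mu)$ given by $Vf(x):=\langle f,k_x\rangle_{\mathcal{H}}$; the Parseval hypothesis says that $V$ is an isometry, and the reconstruction formula displayed in the introduction says that its adjoint $V^*g=\int_X g(x)k_x\,d\mu(x)$ (weak integral) satisfies $V^*V=I_{\mathcal{H}}$. I would then apply Theorem~\ref{HilbertRK} to the operators
\[
    T_nf:=\int_{F_n}\langle f,k_x\rangle_{\mathcal{H}}\,k_x\,d\mu(x)=V^*M_{\chi_{F_n}}Vf,
\]
where $M_{\chi_{F_n}}$ denotes multiplication by the indicator of $F_n$ on $L^2(X,\mu)$.

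The quadratic-form convergence is the easy part. Writing $\langle T_nf,f\rangle_{\mathcal{H}}=\int_{F_n}|Vf|^2\,d\mu$ and $\|f\|_{\mathcal{H}}^2=\int_X|Vf|^2\,d\mu$ (Parseval) and subtracting gives
\[
    \langle f-T_nf,f\rangle_{\mathcal{H}}=\int_{X\setminus F_n}|\langle f,k_x\rangle_{\mathcal{H}}|^2\,d\mu(x).
\]
Since $|Vf|^2\in L^1(X,\mu)$ and $\{F_n\}$ exhausts $X$, dominated convergence yields $\langle f-T_nf,f\rangle_{\mathcal{H}}\to 0$ pointwise on $\mathcal{H}$, and the uniform statement $\sup_{f\in\mathcal{F}}\langle T_nf-f,f\rangle\to 0$ of Theorem~\ref{HilbertRK} is then literally the tail condition in the conclusion.

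The main obstacle will be showing that each $T_n$ is compact on $\mathcal{H}$. I expect to argue this via a trace computation: for any orthonormal basis $\{e_j\}$ of $\mathcal{H}$, Tonelli combined with the Parseval identity applied to $k_x\in\mathcal{H}$ gives
\[
    \operatorname{tr}(T_n)=\sum_j\int_{F_n}|\langle e_j,k_x\rangle_{\mathcal{H}}|^2\,d\mu(x)=\int_{F_n}\|k_x\|_{\mathcal{H}}^2\,d\mu(x),
\]
which is finite as soon as $\|k_x\|_{\mathcal{H}}^2$ is locally integrable against $\mu$; in that case $T_n$ is trace-class and hence compact. This is automatic in all of the intended applications (Gabor, Fock, Bergman, Hardy), where $\|k_x\|$ is in fact locally bounded. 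For a general Parseval frame one can, if necessary, refine the exhaustion to $\widetilde F_n:=F_n\cap\{x:\|k_x\|_{\mathcal{H}}\le n\}$ and verify by Fatou that the tail hypothesis is preserved on the refined exhaustion.

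With all hypotheses of Theorem~\ref{HilbertRK} in place, the sufficient direction is immediate. For the converse I would argue directly: given $\mathcal{F}$ precompact and $\varepsilon>0$, select a finite $\varepsilon$-net $\{f_1,\dots,f_K\}\subseteq\mathcal{F}$, choose $N$ so that $\int_{X\setminus F_N}|\langle f_j,k_x\rangle_{\mathcal{H}}|^2\,d\mu<\varepsilon^2$ for each $j$, and combine the triangle inequality in $L^2(X\setminus F_N,\mu)$ with the isometry identity $\|V(f-f_j)\|_{L^2}=\|f-f_j\|_{\mathcal{H}}$ to obtain the desired uniform tail bound on all of $\mathcal{F}$.
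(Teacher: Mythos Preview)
Your route—reducing to Theorem~\ref{HilbertRK} via the truncation operators $T_n=V^*M_{\chi_{F_n}}V$ and the identity $\langle f-T_nf,f\rangle_{\mathcal H}=\int_{X\setminus F_n}|Vf|^2\,d\mu$—is sound and is in fact the strategy the paper reserves for Theorem~\ref{HilbertFrameRK2}. For Theorem~\ref{HilbertFrameRK1} the paper argues the sufficiency \emph{directly}, without invoking Theorem~\ref{HilbertRK} or any compactness of $T_n$: given a sequence in $\mathcal F$ it extracts a weakly convergent subsequence (taken WLOG to converge to $0$), splits $\|f_{j_k}\|_{\mathcal H}^2=\int_{F_N}+\int_{X\setminus F_N}$, bounds the tail by hypothesis, and drives the $F_N$-piece to zero by dominated convergence using the pointwise limit $\langle f_{j_k},k_x\rangle\to 0$. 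Your $\varepsilon$-net argument for the forward direction is essentially equivalent to the paper's contradiction argument. What your approach buys is a clean conceptual link to Theorem~\ref{HilbertRK}; what the paper's direct argument buys is that one never has to prove $T_n$ is compact.

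The soft spot in your argument is exactly that compactness step. The trace identity $\operatorname{tr}(T_n)=\int_{F_n}\|k_x\|_{\mathcal H}^2\,d\mu$ is correct, but $\mu(F_n)<\infty$ alone does not make the right-hand side finite. Your proposed fix $\widetilde F_n=F_n\cap\{\|k_x\|\le n\}$ does force $\widetilde T_n$ to be trace-class, but the assertion that the \emph{uniform} tail hypothesis transfers from $\{F_n\}$ to $\{\widetilde F_n\}$ is not justified by ``Fatou'': for $m\ge n$ one has $X\setminus\widetilde F_m\subseteq(X\setminus F_n)\cup(F_n\cap\{\|k_x\|>m\})$, and bounding $\sup_{f\in\mathcal F}\int_{F_n\cap\{\|k_x\|>m\}}|\langle f,k_x\rangle|^2\,d\mu$ uniformly in $f$ as $m\to\infty$ is itself a compactness-type statement about $T_{F_n}$ that you have not established. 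In fairness, the paper's own dominated-convergence step tacitly relies on a pointwise dominating function on $F_N$, so both arguments are really complete only under a mild local-boundedness assumption on $\|k_x\|$—which does hold in every application considered.
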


Assuming more on the the frame $\{k_x\}_{x \in X}$, we may relax the finite measure assumption of Theorem \ref{HilbertFrameRK1}. The following frame-theoretic statement relies on Theorem \ref{HilbertRK}.
\begin{thm}\label{HilbertFrameRK2}
Let $\mathcal{H}$ be a Hilbert space equipped with a continuous Parseval frame $\{k_x\}$ indexed by an unbounded metric measure space $(X,d,\mu)$ satisfying for some $w:X\rightarrow (0,\infty)$
\begin{align*}
\addtolength{\itemsep}{0.2cm}
    & \displaystyle \sup_{y \in X} w(y)^{-1} \int_X |\langle k_x,k_y\rangle_{\mathcal{H}}|w(x)\,d\mu(x)<\infty,\\
    & \displaystyle \lim_{R\rightarrow \infty}\sup_{y \in X} w(y)^{-1}\int_{X\setminus B(y,R)}|\langle k_x,k_y\rangle_{\mathcal{H}}|w(x)\,d\mu(x)=0, \quad\text{and}\\
    & \displaystyle |\langle k_x,k_y\rangle_{\mathcal{H}}|\rightarrow 0 \quad\text{as}\quad d(x,y)\rightarrow \infty. 
\end{align*}
\vspace{0.2cm}
Suppose that $X$ has an exhaustion $\{F_n\}_{n=1}^{\infty}$ such that 
$$
	\lim_{d(y,y_0)\rightarrow \infty}\mu(F_n\cap B(y,R))=0
$$
for some (any) $y_0 \in X$, some (any) $R>0$, and all $n \in \mathbb{N}$.
A bounded set $\mathcal{F}\subseteq \mathcal{H}$ is precompact if and only if 
$$
    \lim_{n\rightarrow\infty}\sup_{f\in\mathcal{F}}\int_{X\setminus F_n}|\langle f,k_x\rangle_{\mathcal{H}}|^2\,d\mu(x)=0.
$$
\end{thm}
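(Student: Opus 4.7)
The plan is to apply Theorem~\ref{HilbertRK} to the sequence of operators
$$
T_n f := \int_{F_n} \langle f, k_x\rangle_{\mathcal{H}}\, k_x\, d\mu(x), \qquad f \in \mathcal{H}.
$$
Each $T_n$ is self-adjoint, and a Schur-type estimate using the first frame hypothesis together with the Parseval frame identity shows that $\|T_n\|_{\mathcal{H}\to\mathcal{H}}\le C$ uniformly in $n$. The Parseval frame identity applied to both $\langle T_n f, f\rangle_{\mathcal{H}}$ and $\|f\|^2_{\mathcal{H}}$ yields the key sesquilinear identity
$$
\langle f - T_n f, f\rangle_{\mathcal{H}} = \int_{X\setminus F_n} |\langle f, k_x\rangle_{\mathcal{H}}|^2 \, d\mu(x),
$$
so that the quantity appearing in the statement of Theorem~\ref{HilbertFrameRK2} coincides exactly with the quadratic form of Theorem~\ref{HilbertRK}. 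For each $f\in\mathcal{H}$, since $\|f\|^2_{\mathcal{H}} = \int_X |\langle f, k_x\rangle|^2\, d\mu(x) < \infty$ and $\{F_n\}$ exhausts $X$, dominated convergence gives $\langle f - T_n f, f\rangle_{\mathcal{H}} \to 0$, which verifies the pointwise hypothesis of Theorem~\ref{HilbertRK}.

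The main obstacle is to show that each $T_n$ is a compact operator on $\mathcal{H}$. I would take a bounded sequence $\{g_m\}\subseteq\mathcal{H}$ with $g_m\rightharpoonup 0$ and aim to prove $\|T_n g_m\|_{\mathcal{H}}\to 0$. Writing
$$
\|T_n g_m\|^2_{\mathcal{H}} = \int_X \left|\int_{F_n} \langle g_m, k_x\rangle_{\mathcal{H}} \langle k_x, k_y\rangle_{\mathcal{H}}\, d\mu(x)\right|^2 d\mu(y)
$$
using the Parseval frame identity for $T_n g_m$, I would perform a double decomposition by splitting the inner $x$-integral as $F_n = (F_n \cap B(y,R))\cup (F_n \setminus B(y,R))$ and the outer $y$-integral as $X = B(y_0,S) \cup (X \setminus B(y_0,S))$. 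The far-in-$x$ piece is controlled uniformly in $m$ by a weighted Schur estimate using condition~(2) once $R$ is taken large, and the near-in-$x$ but far-in-$y$ piece is controlled uniformly in $m$ by the exhaustion hypothesis $\mu(F_n\cap B(y,R))\to 0$ as $d(y,y_0)\to\infty$ together with condition~(1), once $S$ is taken large.

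For the remaining piece, where both $y\in B(y_0,S)$ and $x\in F_n\cap B(y,R)\subseteq F_n\cap B(y_0,S+R)$, the weak convergence $g_m\rightharpoonup 0$ yields $\langle g_m, k_x\rangle\to 0$ pointwise in $x$; the decay hypothesis~(3) combined with condition~(1) then supplies a Schur-type integrable dominator so that dominated convergence delivers vanishing as $m\to\infty$ for each fixed $R$ and $S$. Balancing these three estimates so that every piece is either uniformly small in $m$ (for $R,S$ large) or tends to $0$ as $m\to\infty$ (for $R,S$ fixed) is the heart of the compactness argument; here the interplay of all three frame hypotheses with the exhaustion condition is essential, and it is where I expect the most delicate bookkeeping. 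Once the compactness of each $T_n$ is established, Theorem~\ref{HilbertRK} applied to $\{T_n\}$ immediately yields the claimed characterization of precompact subsets of $\mathcal{H}$.
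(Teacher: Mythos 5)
Your reduction is exactly the paper's: the same operators $T_{F_n}f=\int_{F_n}\langle f,k_x\rangle_{\mathcal H}k_x\,d\mu(x)$, the same identity $\langle T_{F_n}f-f,f\rangle_{\mathcal H}=-\int_{X\setminus F_n}|\langle f,k_x\rangle_{\mathcal H}|^2\,d\mu(x)$, the same dominated convergence step, and the same appeal to Theorem \ref{HilbertRK}. The one place the arguments diverge is the crux, the compactness of each $T_{F_n}$: the paper does not prove this but quotes it from \cite{GM2021}*{Proposition 1 and Theorem 1}, whereas you sketch a direct weak-to-norm argument. Your three-way decomposition has the right shape, but as written two of the three pieces do not close from the stated hypotheses alone, so this is where the genuine gap lies.

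Concretely: for the ``near-in-$x$, far-in-$y$'' piece you propose to combine $\mu(F_n\cap B(y,R))\to 0$ with condition (1), but converting smallness of $\mu(F_n\cap B(y,R))$ into smallness of $\int_{F_n\cap B(y,R)}|\langle k_x,k_y\rangle_{\mathcal H}|\,d\mu(x)$ (or its $L^2$ analogue) requires a pointwise bound on $|\langle k_x,k_y\rangle_{\mathcal H}|$, i.e.\ on $\sup_x\|k_x\|_{\mathcal H}$, which does not follow from conditions (1)--(3): those control weighted integrals and off-diagonal decay only, and unlike the discrete case a continuous Parseval frame need not have uniformly bounded elements. For the local piece you invoke dominated convergence from the pointwise decay $\langle g_m,k_x\rangle_{\mathcal H}\to 0$, but the natural dominators are $M\|k_x\|_{\mathcal H}$ in the inner integral and $M\|k_y\|_{\mathcal H}$ for the outer one, and neither $\int_{F_n\cap B(y_0,S+R)}\|k_x\|_{\mathcal H}^2\,d\mu(x)<\infty$ nor $\int_{B(y_0,S)}\|k_y\|_{\mathcal H}^2\,d\mu(y)<\infty$ is guaranteed --- in Theorem \ref{HilbertFrameRK2}, unlike Theorem \ref{HilbertFrameRK1}, the sets $F_n$ (and balls) are not assumed to have finite measure. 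Your phrase ``(3) combined with (1) supplies a Schur-type integrable dominator'' does not actually produce one. To repair the argument you would need either to add a hypothesis such as $\sup_x\|k_x\|_{\mathcal H}<\infty$ (true in every application in the paper, where the $k_x$ are normalized reproducing kernels) and redo the bookkeeping, or to do as the paper does and import the compactness of $T_{F_n}$ from the localization result in \cite{GM2021}.
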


A version of Theorem \ref{HilbertFrameRK1} 
also holds in appropriate Banach space settings. For a Banach space $\mathcal{X}$, $p \in [1,\infty)$, and a measure space $(X,\mu)$, we say $(\{f_x\}_{x \in X},\{f_x^*\}_{x\in X})\subseteq \mathcal{X}\times \mathcal{X}^*$ is a continuous frame for $\mathcal{X}$ with respect to $L^p(X,\mu)$ if 
\begin{enumerate}
\addtolength{\itemsep}{0.2cm}
    \item $\displaystyle\sup_{x \in X}\|f_x^*\|_{\mathcal{X}\rightarrow \mathbb{C}}<\infty$,
    \item the function $x \mapsto \langle f,f_x^*\rangle$ is in $L^p(X,\mu)$ for all $f \in \mathcal{X}$,
    \item there exist $c,C>0$ such that 
    $$
        c\|f\|_{\mathcal{X}}\leq \|\langle f,f_x^*\rangle\|_{L^p(X,\mu)}\leq C\|f\|_{\mathcal{X}}
    $$
    for all $f \in \mathcal{X}$, and 
    \item each $f \in \mathcal{X}$ satisfies
    $$f = \int_X\langle f,f_x^*\rangle f_x\,d\mu(x).
    $$
\end{enumerate}
Note that, unlike in the Hilbert space setting, the existence of $f_x\in \mathcal{X}$ such that (4) holds is not guaranteed from  condition (3) in general Banach spaces, so their existence is assumed. 



\begin{thm}\label{BanachFrameRK1}
Let $p \in [1,\infty)$ and $\mathcal{X}$ be a reflexive Banach space equipped with a continuous frame $(\{f_x\},\{f_x^*\})$ 
with respect to $L^p(X,\mu)$. Suppose that $X$ has an exhaustion $\{F_n\}_{n=1}^{\infty}$ such that $\mu(F_n)<\infty$ for all $n \in \mathbb{N}$. A bounded set $\mathcal{F}\subseteq \mathcal{X}$ is precompact if and only if 
$$
    \lim_{n\rightarrow\infty}\sup_{f\in\mathcal{F}}\int_{X\setminus F_n}|\langle f,f_x^*\rangle|^p\,d\mu(x)=0.
$$
\end{thm}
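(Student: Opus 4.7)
Following the strategy used for Theorem \ref{HilbertFrameRK1}, I would apply Mazur's Theorem B to the truncated reconstruction operators
$$T_nf:=\int_{F_n}\langle f,f_x^*\rangle f_x\,d\mu(x),\qquad n\in\N.$$
These factor as $T_n=S\circ M_{F_n}\circ A$, where $A\colon\mathcal{X}\to L^p(X,\mu)$ is the analysis operator $Af(x)=\langle f,f_x^*\rangle$ (bounded by the upper estimate in (3)), $M_{F_n}$ is multiplication by $\chi_{F_n}$, and $S\colon L^p(X,\mu)\to\mathcal{X}$ is the synthesis operator $Sg=\int_X g(x)f_x\,d\mu(x)$.

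Once $S$ is known to be bounded, the remaining pieces fall into place. Compactness of each $T_n$ reduces to compactness of $A|_{F_n}\colon\mathcal{X}\to L^p(F_n,\mu)$: given a bounded sequence $\{f_k\}\subseteq\mathcal{X}$, reflexivity provides a weakly convergent subsequence $f_{k_j}\rightharpoonup f$, and then condition (1) gives the uniform pointwise domination $|\langle f_{k_j}-f,f_x^*\rangle|\leq 2M\sup_k\|f_k\|_{\mathcal{X}}$, so $\mu(F_n)<\infty$ together with the dominated convergence theorem forces $A|_{F_n}f_{k_j}\to A|_{F_n}f$ in $L^p(F_n,\mu)$. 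Strong convergence $T_nf\to f$ follows because $\chi_{F_n}Af\to Af$ in $L^p$ by dominated convergence and $S(Af)=f$ by (4). For sufficiency, since $T_nf-f=-S(\chi_{X\setminus F_n}Af)$, boundedness of $S$ gives
$$\|T_nf-f\|_{\mathcal{X}}\leq \|S\|\left(\int_{X\setminus F_n}|\langle f,f_x^*\rangle|^p\,d\mu(x)\right)^{1/p},$$
so the hypothesized tail condition forces $\sup_{f\in\mathcal{F}}\|T_nf-f\|_{\mathcal{X}}\to 0$ and Mazur's Theorem B yields precompactness. For necessity, a finite $\varepsilon$-net $\{f_j\}\subseteq\mathcal{F}$ reduces the tail estimate to a finite family, whose tails vanish individually by dominated convergence, and the upper frame bound in (3) controls the discrepancy on each $\varepsilon$-ball.

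The main obstacle is verifying boundedness of $S$ on all of $L^p(X,\mu)$, since the axioms (1)--(4) only assert the reconstruction (4) as an identity in $\mathcal{X}$, not that synthesis extends boundedly to $L^p$. I would handle this by dualizing: for $g^*\in\mathcal{X}^*$, (4) yields $\langle f,g^*\rangle=\int_X \langle f,f_x^*\rangle\langle f_x,g^*\rangle\,d\mu$, so the map $g^*\mapsto \langle f_\cdot,g^*\rangle$ into $L^{p'}(X,\mu)$ has closed graph (after justifying $L^{p'}$-membership from the Bochner integrability built into (4)) and is therefore bounded; a H\"older-and-duality argument then delivers $\|Sg\|_{\mathcal{X}}\lesssim\|g\|_{L^p}$. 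Everything else is structurally parallel to Theorem \ref{HilbertFrameRK1}, with reflexivity of $\mathcal{X}$ taking the place of the Hilbert structure in producing the crucial compactness of $A|_{F_n}$.
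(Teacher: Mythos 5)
There is a genuine gap in the sufficiency direction: your argument hinges on the synthesis operator $S\colon L^p(X,\mu)\to\mathcal{X}$, $Sg=\int_X g(x)f_x\,d\mu(x)$, being well defined and bounded on \emph{all} of $L^p(X,\mu)$, and this does not follow from axioms (1)--(4). Those axioms only control the analysis operator $A$ and assert the reconstruction identity on the range of $A$; they say nothing about $\|f_x\|_{\mathcal{X}}$ or about integrating $g(x)f_x$ for $g\notin\operatorname{Ran}(A)$. Your proposed fix via duality does not close the gap: the identity $\langle f,g^*\rangle=\int_X\langle f,f_x^*\rangle\langle f_x,g^*\rangle\,d\mu$ shows only that $\langle f_\cdot,g^*\rangle$ integrates against functions of the form $Af$, i.e.\ against a (typically proper, closed) subspace of $L^p$, which is not enough to place $\langle f_\cdot,g^*\rangle$ in $L^{p'}$ or to run a closed-graph argument. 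A concrete obstruction: take $\mathcal{X}=\ell^p$, $X=\N\times\{1,2\}$ with counting measure, $f_{(n,1)}^*=f_{(n,2)}^*=2^{-1/p}e_n^*$, $f_{(n,1)}=2^{1/p}(1+n)e_n$, $f_{(n,2)}=-2^{1/p}n\,e_n$. All four axioms hold (with $c=C=1$), yet $S(\chi_{\{(n,1)\}})=2^{1/p}(1+n)e_n$, so $S$ is unbounded on $L^p(X)$; the crucial inequality $\|T_nf-f\|_{\mathcal{X}}\le\|S\|\,\|\chi_{X\setminus F_n}Af\|_{L^p}$, and likewise your verification that $T_nf\to f$, collapse. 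The theorem is nevertheless true for this frame, which shows the route, not just the write-up, is the problem.

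The paper's proof avoids the synthesis operator entirely and is worth internalizing: given a sequence in $\mathcal{F}$, reflexivity yields a subsequence converging weakly to some $f$ (take $f=0$ WLOG), and the \emph{lower} frame bound gives
\begin{equation*}
\|f_{j_k}\|_{\mathcal{X}}^p\le\frac{1}{c^p}\int_{F_N}|\langle f_{j_k},f_x^*\rangle|^p\,d\mu(x)+\frac{1}{c^p}\int_{X\setminus F_N}|\langle f_{j_k},f_x^*\rangle|^p\,d\mu(x).
\end{equation*}
The second term is uniformly small by hypothesis; the first tends to $0$ by dominated convergence, since weak convergence gives pointwise convergence of $\langle f_{j_k},f_x^*\rangle$, axiom (1) plus boundedness of $\mathcal{F}$ gives a uniform bound, and $\mu(F_N)<\infty$. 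This is exactly the reflexivity-plus-dominated-convergence mechanism you correctly identified for the compactness of $A|_{F_n}$ — applied directly to norm convergence of the subsequence, it finishes the proof with no appeal to Mazur's theorem or to $S$. Your necessity direction ($\varepsilon$-net plus the upper frame bound) is fine and matches the paper's.
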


We next extend our compactness criterion to function spaces which are not necessarily framed 
spaces. More precisely, we consider Banach function spaces consisting of functions defined on a metric measure space $(X,d,\mu)$ with a Radon measure $\mu$. 

\begin{thm}\label{BanachMeanValueRK}
Let $\mathcal{X}$ be a Banach space of functions on a metric measure space $(X, d,\mu)$ with a compact exhaustion $\{F_n\}_{n=1}^{\infty}$. 
Let $p \in [1,\infty)$ and suppose that there is a point $x_0 \in X$ and linear maps $D_j: \mathcal{X}\rightarrow C(X)$, $j=1,2,\ldots,N+M$, such that
$$
	\|f\|_{\mathcal{X}}^p=\sum_{j=1}^{N}\int_{X}|D_jf(x)|^p\,d\mu(x)+\sum_{j=N+1}^{N+M}|D_jf(x_0)|^p
$$
for all $f \in \mathcal{X}$. Suppose also that 
\begin{itemize}
\item[(i)] $\mathcal{F}\subseteq\mathcal{X}$ is bounded,
\item[(ii)] for each set $F_n$ and $1\leq j\leq N$, the collection of functions $\{D_j f : f\in \mathcal{F}\}$ is equicontinuous on $F_n$, and 
\item[(iii)] for each $x\in X$ and $1\leq j\leq N+M$, $\sup_{f \in \mathcal{F}} |D_jf(x)|<\infty$.
\end{itemize}
Then $\mathcal{F}$ is precompact if and only if 
$$
	\lim_{n\rightarrow \infty}\sup_{f\in\mathcal{F}}\sum_{j=1}^N\int_{X\setminus F_n}|D_jf(x)|^p\,d\mu(x) =0.
$$
\end{thm}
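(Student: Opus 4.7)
My plan is to handle the two directions separately: necessity by a standard $\varepsilon/3$ argument from a finite net, and sufficiency by an Arzel\`a--Ascoli diagonal extraction on the compact exhaustion. I expect the sufficiency direction to be the main task.

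For necessity, given a precompact $\mathcal{F}$, I would first fix a single $f\in\mathcal{X}$ and note that the tail $\sum_{j=1}^N\int_{X\setminus F_n}|D_jf|^p\,d\mu$ is bounded by $\|f\|_{\mathcal{X}}^p<\infty$ and that $\bigcap_n(X\setminus F_n)=\emptyset$ by the exhaustion property; dominated convergence gives vanishing as $n\to\infty$. To upgrade to uniform decay on $\mathcal{F}$, I would cover $\mathcal{F}$ by finitely many $\mathcal{X}$-balls centered at $f_1,\ldots,f_K\in\mathcal{F}$, observe that the tail functional $f\mapsto\bigl(\sum_{j=1}^N\int_{X\setminus F_n}|D_jf|^p\,d\mu\bigr)^{1/p}$ satisfies the triangle inequality (Minkowski on each $L^p$-integral and then on the $\ell^p$-sum over $j$, using linearity of the $D_j$) and is dominated by $\|\cdot\|_{\mathcal{X}}$, and deduce the uniform bound from the finite list.

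For sufficiency, given an arbitrary sequence $\{f_k\}\subseteq\mathcal{F}$, I would apply the Arzel\`a--Ascoli theorem on each compact $F_n$ for each $j\in\{1,\ldots,N\}$: hypothesis (ii) supplies equicontinuity and (iii) supplies pointwise boundedness, yielding a uniformly convergent subsequence of $\{D_jf_k\}$ on $F_n$. A standard double diagonalization over the indices $n$ and $j$ merges these into a single subsequence $\{f_{k_\ell}\}$ such that $D_jf_{k_\ell}$ converges uniformly on every $F_n$ for $1\leq j\leq N$. For $j=N+1,\ldots,N+M$, I would take a further subsequence along which the scalars $D_jf_{k_\ell}(x_0)$ converge, again using (iii).

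To verify that $\{f_{k_\ell}\}$ is Cauchy in $\mathcal{X}$, given $\varepsilon>0$ I would use the tail hypothesis to choose $n$ with $\sum_{j=1}^N\int_{X\setminus F_n}|D_jf|^p\,d\mu<\varepsilon^p$ uniformly over $f\in\mathcal{F}$, and then split
\[
    \|f_{k_\ell}-f_{k_m}\|_{\mathcal{X}}^p=\sum_{j=1}^N\int_{F_n}|D_j(f_{k_\ell}-f_{k_m})|^p\,d\mu+\sum_{j=1}^N\int_{X\setminus F_n}|D_j(f_{k_\ell}-f_{k_m})|^p\,d\mu+\sum_{j=N+1}^{N+M}|D_j(f_{k_\ell}-f_{k_m})(x_0)|^p.
\]
The tail piece is at most $(2\varepsilon)^p$ by Minkowski, while the $F_n$-piece and the point-evaluation terms become arbitrarily small for $\ell,m$ large thanks to the uniform convergence on $F_n$; here I use that $\mu(F_n)<\infty$, which holds because $F_n$ is compact and $\mu$ is Radon. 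Completeness of $\mathcal{X}$ then yields a limit, proving precompactness. The main obstacle is primarily organizational, namely a careful double diagonalization over the exhaustion and the frame indices with a separate treatment of the point-evaluation terms; once the subsequence is in hand, the splitting estimate is routine.
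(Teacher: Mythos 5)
Your proposal is correct and follows essentially the same route as the paper: the sufficiency direction is the identical Arzel\`a--Ascoli plus diagonalization plus Bolzano--Weierstrass extraction followed by the same tail/compact-part Cauchy splitting. The only cosmetic difference is in the necessity direction, where you argue directly via a finite net and the $1$-Lipschitz tail functional while the paper argues by contradiction with a reverse triangle inequality; both rest on the same facts (dominated convergence for a single element and linearity of the $D_j$).
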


Note that Theorem \ref{BanachMeanValueRK} generalizes our Theorem \ref{HilbertFrameRK1} in the case when $x \mapsto k_x$ is continuous by taking $N=1$, $M=0$, and $Df(x)=\langle f,k_x\rangle_{\mathcal{H}}$.


\subsection{Compactness criteria in function spaces}
We now show how our results can be used to establish compactness criteria in various function spaces including the Lebesgue space $L^2(\mathbb{R}^n)$, Paley-Wiener spaces, weighted Bargmann-Fock spaces, and a scale of weighted Besov-Sobolev spaces that includes weighted Bergman spaces, the Hardy space, and the Dirichlet space. This list of applications is certainly not exhaustive -- we only mention a focused selection of well-known examples in which our results apply.

\subsubsection{The Lebesgue space $L^2(\mathbb{R}^n)$} We already presented several alternative compactness characterizations in $L^2(\mathbb{R}^n)$ besides the classical Riesz-Kolmogorov theorem. Our Theorem~\ref{HilbertFrameRK1} shows that every continuous Parseval frame provides a new compactness criterion. For example, if we use the continuous Parseval frame of wavelets indexed as usual by the $ax+b$ group $\mathbb{R}^{n+1}_+:=(0,\infty)\times\mathbb{R}^n$ equipped with the usual hyperbolic measure and metric, we obtain a compactness characterization in terms of the continuous wavelet transform. Namely, a bounded set $\mathcal{F}\subseteq L^2(\R^n)$ is compact if and only if the continuous wavelet transforms of all the elements of $\mathcal{F}$ have uniformly null tails. This fact seems to have been first noticed in~\cite{DFG2002}*{Theorem 3}. 

\subsubsection{Paley-Wiener spaces} Recall that for a Borel measurable 
set $E \subseteq \R^n$ with finite Lebesgue measure, 
the Paley-Wiener space $\PW(E)$ is the subspace of $L^2(\R^n)$ consisting of functions whose Fourier transform is supported in $E$. In the case when $E=[-a,a]^n$, 
all elements of $\PW(E)$ can be extended to entire functions with exponential type no greater than $a$. Every Paley-Wiener space is a reproducing kernel Hilbert space, and an application of the Plancherel theorem shows that the normalized reproducing kernels form a continuous Parseval frame for $\PW(E)$. Therefore our Theorem \ref{HilbertFrameRK1} immediately gives the following simple criterion for compactness in Paley-Wiener spaces.
\begin{thm} A bounded set $\mathcal{F}\subseteq \PW(E)$ is precompact if and only if 
$$
    \lim_{R\rightarrow\infty}\sup_{f\in\mathcal{F}}\int_{|x|>R}|f(x)|^2 \,dx=0.
$$
\end{thm}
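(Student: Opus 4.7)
The plan is to exhibit $\PW(E)$ as a reproducing kernel Hilbert space whose (suitably normalized) reproducing kernels form a continuous Parseval frame indexed by $\mathbb{R}^n$, and then apply Theorem~\ref{HilbertFrameRK1} with the obvious exhaustion by balls. Because the Paley-Wiener norm coincides with the ambient $L^2(\mathbb{R}^n)$ norm, the exhaustion condition from Theorem~\ref{HilbertFrameRK1} will translate directly into the stated tail-decay condition, so the result is essentially a matter of identifying the frame.

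First I would identify the reproducing kernel. Since every $f\in \PW(E)$ satisfies $\hat f = \chi_E \hat f$, the Fourier inversion formula gives $f(y)=\int_E \hat f(\xi)e^{2\pi i \xi\cdot y}\,d\xi$. A direct Fubini computation then shows that $K_y(x):=\int_E e^{2\pi i \xi\cdot (x-y)}\,d\xi$ is the reproducing kernel for $\PW(E)$, and $\|K_y\|^2 = K_y(y)=|E|$ (the Lebesgue measure of $E$), independent of $y$. Hence the normalized kernels are $k_y:=K_y/\sqrt{|E|}$, and $\langle f,k_y\rangle = f(y)/\sqrt{|E|}$.

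Next I would verify the Parseval property. By the Plancherel theorem,
$$
    \|f\|_{\PW(E)}^2 = \int_{\mathbb{R}^n}|f(y)|^2\,dy = \int_E |\hat f(\xi)|^2\,d\xi,
$$
so taking $X=\mathbb{R}^n$ with measure $d\mu(y):= |E|\,dy$ yields
$$
    \int_X |\langle f,k_y\rangle|^2\,d\mu(y) = \int_{\mathbb{R}^n}\frac{|f(y)|^2}{|E|}\cdot |E|\,dy = \|f\|_{\PW(E)}^2.
$$
Thus $(\{k_y\}_{y\in\mathbb{R}^n},\mu)$ is a continuous Parseval frame for $\PW(E)$.

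Finally I would apply Theorem~\ref{HilbertFrameRK1} with the exhaustion $F_n:=\{y\in\mathbb{R}^n : |y|\leq n\}$; each $F_n$ has $\mu(F_n)=|E|\cdot V(B(0,n))<\infty$ since $|E|<\infty$, so the hypotheses of Theorem~\ref{HilbertFrameRK1} are satisfied. The conclusion of that theorem is that a bounded set $\mathcal F\subseteq \PW(E)$ is precompact if and only if
$$
    \lim_{n\to\infty}\sup_{f\in\mathcal F}\int_{|y|>n}|\langle f,k_y\rangle|^2\,d\mu(y)=\lim_{n\to\infty}\sup_{f\in\mathcal F}\int_{|y|>n}|f(y)|^2\,dy = 0,
$$
which is precisely the asserted criterion after passing from the sequence $n\to\infty$ to $R\to\infty$. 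The only step that requires any thought is the identification of the correct normalization/measure so that the frame is genuinely Parseval; once this is in place, the theorem is immediate from Theorem~\ref{HilbertFrameRK1}.
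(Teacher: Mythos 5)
Your proposal is correct and follows exactly the route the paper intends: it identifies the (normalized) reproducing kernels of $\PW(E)$ as a continuous Parseval frame via the Plancherel theorem and then invokes Theorem~\ref{HilbertFrameRK1} with the exhaustion by balls, whose finite measure is guaranteed by $|E|<\infty$. The paper leaves these verifications implicit, and your computation of $K_y$, $\|K_y\|^2=|E|$, and the choice of measure supplies precisely the missing details.
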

\noindent We remark that in the classical case $E=[-a,a]^n$ this fact is also immediate from Theorem C since the second condition of that theorem is automatically satisfied by a family of functions in the Paley-Wiener space $\PW([-a.a]^n).$

\subsubsection{Weighted Bargmann-Fock spaces} The weighted Bargmann-Fock space $\FF_\phi(\C^n)$ is the space of all entire functions $f:\C^n \to \C$ satisfying the integrability condition 
$$\norm{f}_\phi^2 := \int_{\C^n} \abs{f(z)}^2 e^{-2 \phi(z)} \,dV(z) < \infty,$$
where 
$\phi: \C^n \to \R$ is a plurisubharmonic function such that for all $z\in \C^n$ 
\begin{equation*}\label{harmonic}
 i\partial\bar{\partial}\phi \simeq i\partial\bar{\partial}|z|^2,
\end{equation*}
in the sense of positive currents.  The classical Bargmann-Fock space $\FF(\C^n)$ is an important special case obtained when $\phi(z)=\frac{\pi}{2} |z|^2$. 
 
Equipped with the norm $\norm{\cdot}_\phi$, the weighted Bargmann-Fock space $\FF_\phi(\C^n)$ is a reproducing kernel Hilbert space. We will denote its reproducing kernel at $z$ by $K^\phi_z$. It is easy to see that the normalized reproducing kernels indexed by the metric measure space $(\C^n, V_\phi, d)$, where $dV_\phi(z) := \|K^\phi_z\|_\phi^2 e^{-2\phi(z)} dV(z)$ and $d$ is the usual Euclidean metric on $\C^n$, form a continuous Parseval frame. A straightforward application of our Theorem~\ref{HilbertFrameRK1} gives the following criterion for compactness in weighted Bargmann-Fock spaces.

\begin{thm} A bounded set $\FF\subseteq \FF_\phi(\C^n)$ is precompact if and only if 
$$
    \lim_{R\rightarrow\infty}\sup_{f\in\mathcal{F}}\int_{
    |z|>R}|f(z)|^2e^{-2\phi(z)}\, dV(z)=0.
$$
\end{thm}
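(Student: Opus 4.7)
The plan is to apply Theorem~\ref{HilbertFrameRK1} directly to the continuous Parseval frame $\{k_z\}_{z\in\C^n}$ of normalized reproducing kernels $k_z := K^\phi_z/\|K^\phi_z\|_\phi$ indexed by the measure space $(\C^n,V_\phi)$, which is exactly the frame identified just before the statement. The reproducing property gives $\langle f,K^\phi_z\rangle_\phi = f(z)$, and hence
$$|\langle f,k_z\rangle_\phi|^2\,dV_\phi(z) = \frac{|f(z)|^2}{\|K^\phi_z\|_\phi^2}\cdot \|K^\phi_z\|_\phi^2\, e^{-2\phi(z)}\,dV(z) = |f(z)|^2 e^{-2\phi(z)}\,dV(z).$$
Consequently, the two tail quantities in question are literally equal once one chooses the exhaustion $F_n := B(0,n)\subseteq\C^n$ and lets $R=n$.

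The one remaining hypothesis of Theorem~\ref{HilbertFrameRK1} to verify is that $V_\phi(F_n)<\infty$ for every $n$. For this I would invoke the standard pointwise kernel-norm estimate $\|K^\phi_z\|_\phi^2 \asymp e^{2\phi(z)}$, valid for plurisubharmonic weights $\phi$ satisfying $i\partial\bar\partial\phi \simeq i\partial\bar\partial|z|^2$; this is classical in the literature on weighted Bargmann-Fock spaces. It gives $dV_\phi \asymp dV$ as measures on $\C^n$, so every Euclidean ball has finite $V_\phi$-measure. In fact only the upper bound $\|K^\phi_z\|_\phi^2 \lesssim e^{2\phi(z)}$ on compact sets is needed here.

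With this in hand the hypotheses of Theorem~\ref{HilbertFrameRK1} are satisfied: a bounded $\mathcal{F}\subseteq \FF_\phi(\C^n)$ is precompact if and only if the tail integrals $\int_{\C^n\setminus F_n}|\langle f,k_z\rangle_\phi|^2\,dV_\phi(z)$ tend to zero uniformly over $f\in\mathcal{F}$, and by the identity above this is precisely the condition asserted in the statement. The only mildly nontrivial step is the kernel-norm estimate, and that is classical under the given curvature hypothesis on $\phi$; the rest is a direct substitution into our abstract framework.
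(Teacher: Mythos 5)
Your proposal is correct and follows exactly the route the paper intends: the paper gives no separate proof of this theorem but states that it is "a straightforward application of Theorem~\ref{HilbertFrameRK1}" to the continuous Parseval frame of normalized reproducing kernels indexed by $(\C^n, V_\phi, d)$, which is precisely your argument. Your computation showing $|\langle f,k_z\rangle_\phi|^2\,dV_\phi(z)=|f(z)|^2e^{-2\phi(z)}\,dV(z)$ and your verification of $V_\phi(B(0,n))<\infty$ via the classical estimate $\|K^\phi_z\|_\phi^2\asymp e^{2\phi(z)}$ fill in the details correctly.
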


\subsubsection{Weighted Besov-Sobolev spaces}

Let $D\subseteq \mathbb{C}^n$ be a bounded domain, $p \in [1,\infty)$, and $\sigma$ be an integrable weight on $D$, that is, $\sigma$ is positive almost everywhere and $\int_D\sigma\,dV<\infty$. 
In order for our spaces to be Banach spaces, we additionally suppose 
that for any compact $K \subsetneq D$, there exists a constant $C_{K,p,\sigma}>0$ such that
\begin{align}\label{BergmanInequalityCondition}
    |f(z)|^p \leq C_{K,p, \sigma} \int_{D} |f|^p \sigma \mathop{d V}
\end{align} 
for every $z \in K $ and all functions $f$ in the space. 
For $J \in \mathbb{N}$ and such $D\subseteq\mathbb{C}^n$, $p\in[1,\infty)$, and integrable weights $\sigma$, we define the weighted Besov-Sobolev space $\mathcal{B}_{\sigma}^{p,J}(D)$ to be the space of holomorphic $f:D\rightarrow \mathbb{C}$ such that \eqref{BergmanInequalityCondition} holds and
$$
    \|f\|_{\mathcal{B}_{\sigma}^{p,J}(D)}:= \left(\sum_{|\alpha|<J} \left|\frac{\partial^\alpha f}{\partial z^\alpha}(z_0)\right|^p + \sum_{|\alpha|=J}\int_{D} \left|\frac{\partial^\alpha f}{\partial z^\alpha}\right|^p \sigma \mathop{d V}\right)^{1/p}< \infty,
$$
where $\alpha$ are multi-indices indicating complex derivatives and $z_0$ is an arbitrary fixed point in $D$. For $\delta \geq 0$, define the subset $D_{\delta}:= \{z \in D: \operatorname{dist}(z,\partial D) >\delta\}$ and notice that $D_0=D.$  The following is a consequence of Theorem \ref{BanachMeanValueRK}. 

\begin{thm}\label{BesovSobolevRK}
A bounded set $\mathcal{F} \subseteq \mathcal{B}^{p,J}_{\sigma}(D)$ is precompact if and only if 
$$
    \lim_{\delta\rightarrow 0^+} \sup_{f \in \mathcal{F}} \sum_{|\alpha|=J}\int_{D \setminus D_\delta} \left|\frac{\partial^\alpha f}{\partial z^\alpha}\right|^p \sigma \mathop{d V}=0.
$$
\end{thm}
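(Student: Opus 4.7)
The plan is to apply Theorem \ref{BanachMeanValueRK} with metric measure space $X = D$, Euclidean metric, and measure $d\mu = \sigma\, dV$; a compact exhaustion is $F_n = \overline{D_{1/n}}$, which is a compact subset of $D$ since it stays at distance at least $1/n$ from $\partial D$, and whose union is all of $D$. For the linear maps, I would take $D_\alpha f = \partial^\alpha f/\partial z^\alpha$ for each multi-index $\alpha$ with $|\alpha| \le J$, ordered so that the $N$ multi-indices with $|\alpha|=J$ correspond to the integral terms and the $M$ multi-indices with $|\alpha|<J$ correspond to the point-evaluation terms at $z_0$. With these choices, the norm decomposition required by Theorem \ref{BanachMeanValueRK} is precisely the definition of $\|\cdot\|_{\mathcal{B}^{p,J}_\sigma(D)}$.

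Conditions (i)--(iii) of Theorem \ref{BanachMeanValueRK} remain to be checked. Condition (i) is the given boundedness of $\mathcal{F}$. Conditions (ii) and (iii) both reduce to showing that, for every compact $K \subset D$ and every $|\alpha| \le J$, the linear functional $f \mapsto \partial^\alpha f(z)$ is bounded on $\mathcal{B}^{p,J}_\sigma(D)$ uniformly in $z \in K$. I would establish this by combining the Bergman-type inequality \eqref{BergmanInequalityCondition} with the classical interior Cauchy estimates for holomorphic functions: \eqref{BergmanInequalityCondition} converts weighted integral information into local sup bounds on holomorphic functions, and Taylor expansion about $z_0$ together with integration of the $J$-th derivatives along paths then controls $\partial^\alpha f$ for every $|\alpha| \le J$ by $\|f\|_{\mathcal{B}^{p,J}_\sigma(D)}$. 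This delivers (iii). The equicontinuity in (ii) then follows by applying the same estimates to $\partial^{\alpha+e_k} f$ on a slightly larger compact set containing $F_n$ with positive boundary distance, which produces uniform Lipschitz constants for the family $\{\partial^\alpha f : f \in \mathcal{F}\}$ on $F_n$ for each $|\alpha| = J$.

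Applying Theorem \ref{BanachMeanValueRK}, $\mathcal{F}$ is precompact if and only if
\[
    \lim_{n \to \infty} \sup_{f \in \mathcal{F}} \sum_{|\alpha|=J} \int_{D \setminus \overline{D_{1/n}}} \left|\frac{\partial^\alpha f}{\partial z^\alpha}\right|^p \sigma\, dV = 0.
\]
Since $\overline{D_{1/n}} \setminus D_{1/n}$ is a level set of the continuous function $z \mapsto \operatorname{dist}(z, \partial D)$ and hence a null set for $\sigma\, dV$, this is the same as the limit over $D \setminus D_{1/n}$, and the monotonicity of $\delta \mapsto D \setminus D_\delta$ as $\delta \to 0^+$ makes this condition equivalent to the $\delta \to 0^+$ formulation stated in the theorem.

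The main obstacle I expect is the verification of compact-set-uniform pointwise and Lipschitz bounds for the family $\{\partial^\alpha f : f \in \mathcal{F}\}$ from mere boundedness in $\mathcal{B}^{p,J}_\sigma(D)$. This is precisely where the hypothesis \eqref{BergmanInequalityCondition} enters: it is the bridge that converts the weighted $L^p$ norm of the top-order derivatives (plus point values of lower-order derivatives at $z_0$) into pointwise holomorphic-function estimates, after which standard complex analysis handles the rest.
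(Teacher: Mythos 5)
Your proposal is correct and follows essentially the same route as the paper: both apply Theorem \ref{BanachMeanValueRK} with the $J$-th order derivatives as the integral maps $D_j$ and the lower-order derivatives at $z_0$ as the point-evaluation maps, and both verify conditions (ii) and (iii) by using \eqref{BergmanInequalityCondition} to bound the top-order derivatives uniformly on compacts and then a path-integration argument to control the lower-order ones. The only cosmetic differences are that you derive equicontinuity from Cauchy estimates on the $(J+1)$-st derivatives whereas the paper uses a mean-value-property computation with symmetric differences of balls, and that you spell out the passage from the discrete exhaustion $\overline{D_{1/n}}$ to the continuous $\delta\to 0^+$ limit, a detail the paper leaves implicit.
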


Theorem \ref{BesovSobolevRK} immediately gives compactness criteria for various function spaces including weighted Bergman spaces, the Hardy space, and the Dirichlet space.


Let $D$ be a strongly pseudoconvex domain with a $C^2$ defining function $\rho$, that is, $\rho$ is a $C^2$ plurisubharmonic function such that with $D=\{z\in \mathbb{C}^n:\rho(z)<0\}$ and $\nabla{\rho}(z) \neq 0$ for $z \in \partial D$. For $p \in [1,\infty)$ and $t>-1$, define the weighted Bergman space of $D$, $\mathcal{A}^p_t(D)$, to be the space of holomorphic $f:D\rightarrow\mathbb{C}$ such that
$$
    \|f\|_{\mathcal{A}^p_t(D)}:= \left( \int_{D} |f|^p (-\rho)^t \mathop{dV} \right)^{1/p}< \infty.
$$
Note that these spaces generalize the radially weighted Bergman spaces of the unit ball $\mathbb{B}_n\subseteq \mathbb{C}^n$ with weight $(1-|z|^2)^t$. We denote $\mathcal{A}^p(D):=\mathcal{A}^p_0(D)$.
\begin{cor}\label{RadialBergmanSpaces} 
A bounded set $\mathcal{F} \subseteq \mathcal{A}^p_t(D)$ is precompact if and only if 
$$
    \lim_{\delta\rightarrow 0^+} \sup_{f \in \mathcal{F}} \int_{D \setminus D_\delta} |f|^p (-\rho)^t \mathop{d V}=0.
$$
\end{cor}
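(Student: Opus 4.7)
The plan is to deduce Corollary \ref{RadialBergmanSpaces} as a direct specialization of Theorem \ref{BesovSobolevRK}, taking $J=0$ and $\sigma(z)=(-\rho(z))^t$. With this choice the sum $\sum_{|\alpha|<J}$ in the Besov-Sobolev norm is empty, and the sum $\sum_{|\alpha|=0}$ contributes only $|f|^p$, so $\|\cdot\|_{\mathcal{B}^{p,0}_\sigma(D)}$ coincides with $\|\cdot\|_{\mathcal{A}^p_t(D)}$. Once this identification is justified, the conclusion of Theorem \ref{BesovSobolevRK} is precisely the criterion claimed in the corollary.

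The first task is to verify that $\sigma=(-\rho)^t$ qualifies as an integrable weight. Positivity on $D$ is immediate from $\rho<0$. For integrability on the bounded domain $D$, the hypotheses that $\rho\in C^2$ and $\nabla\rho\neq 0$ on $\partial D$ yield $-\rho(z)\simeq\operatorname{dist}(z,\partial D)$ in a tubular neighborhood of $\partial D$, and hence $\int_D(-\rho)^t\,dV<\infty$ since $t>-1$. Next I need to check the Bergman-type inequality \eqref{BergmanInequalityCondition} for $\mathcal{A}^p_t(D)$. For any compact $K\subsetneq D$ choose $r>0$ so that $\bigcup_{z\in K}\overline{B(z,r)}\subsetneq D$; on this compact tubular set $(-\rho)^t$ is bounded below by a positive constant. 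The standard sub-mean-value estimate for holomorphic functions on $B(z,r)$ then gives $|f(z)|^p\lesssim \int_{B(z,r)}|f|^p\,dV\lesssim\int_D|f|^p(-\rho)^t\,dV$ with a constant depending only on $K$, $p$, and $t$.

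Finally, I would verify conditions (i)--(iii) of Theorem \ref{BanachMeanValueRK} for a bounded family $\mathcal{F}\subseteq\mathcal{A}^p_t(D)$. Condition (i) is assumed, and condition (iii) (pointwise boundedness of $\{f(x):f\in\mathcal{F}\}$) is immediate from the Bergman-type inequality applied at each $x\in D$. For equicontinuity on the compact exhaustion $\{D_{1/n}\}_{n\ge 1}$ of $D$, I would invoke the Cauchy integral formula on balls of radius $1/(2n)$ contained in $D$: this bounds the first complex derivatives of every $f\in\mathcal{F}$ on $D_{1/n}$ by a uniform constant times $\|f\|_{\mathcal{A}^p_t(D)}$, giving a uniform Lipschitz bound on $D_{1/n}$ and hence equicontinuity of $\mathcal{F}$ there.

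The argument is almost entirely verification, so there is no serious conceptual obstacle. The most delicate technical point is the comparability $-\rho\simeq\operatorname{dist}(\cdot,\partial D)$ used both for integrability of the weight and for the lower bound on $(-\rho)^t$ on tubular neighborhoods; this is a standard tubular-neighborhood computation using the non-vanishing of $\nabla\rho$ on $\partial D$, and it also ensures that each $D_\delta$ is compactly contained in $D$ so that $\{D_{1/n}\}$ is a legitimate compact exhaustion.
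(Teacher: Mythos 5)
Your proposal is correct and follows essentially the same route as the paper: specialize Theorem \ref{BesovSobolevRK} to $J=0$ and $\sigma=(-\rho)^t$, check that $(-\rho)^t$ is an integrable weight (the paper cites \cite{LS2012}*{Lemma 4.1} where you argue $-\rho\simeq\operatorname{dist}(\cdot,\partial D)$ directly), and verify \eqref{BergmanInequalityCondition} on compact sets (you use the sub-mean-value inequality for $|f|^p$ where the paper uses the mean value property for $|f|$ followed by H\"older, both of which work). Your final step of re-verifying conditions (i)--(iii) of Theorem \ref{BanachMeanValueRK} is harmless but redundant, since those verifications are carried out once and for all inside the proof of Theorem \ref{BesovSobolevRK} for any space satisfying \eqref{BergmanInequalityCondition}.
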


We also apply Theorem \ref{BesovSobolevRK} to weighted Bergman spaces with respect to $B_p$ weights; see \cite{WW2020} for a definition of $B_p$ weights on $C^2$ domains. For a strongly pseudoconvex $C^2$ domain $D$, $p \in [1,\infty)$, and $\sigma \in B_p$, define the weighted Bergman space of $D$ with respect to $\sigma$, $\mathcal{A}^p_\sigma(D)$, to be the space of holomorphic $f:D\rightarrow\mathbb{C}$ such that
$$
    \|f\|_{\mathcal{A}^p_\sigma(D)}:= \left( \int_{D} |f|^p \sigma \mathop{dV} \right)^{1/p}< \infty.
$$
Notice that if $\sigma \equiv 1$, then $\mathcal{A}^p_{\sigma}(\mathbb{D})=\mathcal{A}^p(D)$.
\begin{cor}\label{BPBergmanSpaces} 
A bounded set $\mathcal{F} \subseteq \mathcal{A}^p_\sigma(D) $ is precompact if and only if   
$$
    \lim_{\delta\rightarrow 0^+} \sup_{f \in \mathcal{F}} \int_{D \setminus D_\delta} |f|^p \sigma \mathop{d V}=0.
$$
\end{cor}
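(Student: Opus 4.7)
The plan is to recognize $\mathcal{A}^p_\sigma(D)$ as the special case of the weighted Besov-Sobolev space $\mathcal{B}^{p,J}_\sigma(D)$ with $J=0$: the sum over $|\alpha|<0$ is vacuous, and the multi-indices with $|\alpha|=0$ contribute only the single term $\int_D |f|^p\sigma \, dV$. Thus $\|\cdot\|_{\mathcal{A}^p_\sigma(D)}=\|\cdot\|_{\mathcal{B}^{p,0}_\sigma(D)}$, and the conclusion will follow from Theorem \ref{BesovSobolevRK} with $N=1$, $M=0$, and $D_1 f = f$, provided I verify its hypotheses.

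First I would record the sub-mean-value inequality \eqref{BergmanInequalityCondition}: for each compact $K\subsetneq D$ there exists $C_K>0$ such that $|f(z)|^p \le C_K \int_D |f|^p \sigma \, dV$ for every $f\in \mathcal{A}^p_\sigma(D)$ and every $z\in K$. This is the one substantive ingredient; it combines the standard holomorphic sub-mean-value property on a small Euclidean ball $B(z,r)\subset D$ with H\"older's inequality and the $B_p$ condition (which controls the $L^1$-average of $\sigma^{-1/(p-1)}$ against $\sigma$). This inequality is established in \cite{WW2020} on strongly pseudoconvex $C^2$ domains, so I would cite it directly rather than reprove it.

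Next I would check conditions (i)--(iii) of Theorem \ref{BanachMeanValueRK}. Condition (i) is assumed. Condition (iii), the pointwise boundedness $\sup_{f\in\mathcal{F}}|f(x)|<\infty$, follows immediately from the Bergman inequality above applied with $K=\{x\}$, combined with the boundedness of $\mathcal{F}$. For condition (ii), equicontinuity of $\mathcal{F}$ on each compact $F_n$, I would fix a slightly larger compact set $F_n\subset K\subsetneq D$, apply the Bergman inequality on $K$ to the holomorphic functions $\partial f/\partial z_k$ (using Cauchy's estimates to bound $|\nabla f(z)|$ in terms of $\sup_{w\in K}|f(w)|$ on a small polydisc about $z$), and thereby obtain a uniform Lipschitz bound $|f(z)-f(w)|\le L_n |z-w|$ for $z,w\in F_n$ and $f\in\mathcal{F}$. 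Finally, I would choose the compact exhaustion $F_n:=\overline{D_{1/n}}$ for $n$ sufficiently large, noting that $D\setminus F_n = D\setminus \overline{D_{1/n}}$ so that the decay condition in Theorem~\ref{BesovSobolevRK} is precisely the condition $\lim_{\delta\to 0^+}\sup_{f\in \mathcal{F}}\int_{D\setminus D_\delta}|f|^p\sigma\,dV=0$ after reparametrizing $\delta=1/n$.

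The only non-routine step is the Bergman-type pointwise inequality for $B_p$ weights; everything else is assembly. Since that inequality is already available in \cite{WW2020}, the corollary reduces to a direct application of Theorem~\ref{BesovSobolevRK}.
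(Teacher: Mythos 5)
Your proposal is correct and follows essentially the same route as the paper: both recognize $\mathcal{A}^p_\sigma(D)=\mathcal{B}^{p,0}_\sigma(D)$ and reduce the corollary to verifying the pointwise inequality \eqref{BergmanInequalityCondition}, which the paper proves in four lines via the sub-mean-value property, H\"older's inequality, and the integrability of $\sigma^{-1/(p-1)}$ guaranteed by the $B_p$ condition (the same mechanism you sketch, just written out rather than cited from \cite{WW2020}). Your additional verification of conditions (ii) and (iii) of Theorem \ref{BanachMeanValueRK} is harmless but redundant, since the proof of Theorem \ref{BesovSobolevRK} already derives those from \eqref{BergmanInequalityCondition}.
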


\begin{rem}
The hypothesis that $\mathcal{F}$ is bounded can be removed in both Corollary \ref{RadialBergmanSpaces} and Corollary \ref{BPBergmanSpaces} since the boundedness of $\mathcal{F}$ is implied by the uniformly vanishing integral condition. We illustrate the proof when $\sigma$ is a $B_p$ weight and note that the obvious modifications can be made when the weight is as in Corollary \ref{RadialBergmanSpaces}. Take $\varepsilon=1$ and fix the corresponding $\delta$ as in the proof of Theorem \ref{BesovSobolevRK} (see Section \ref{AbstractCompactnessProofs}). It suffices to show that 
$$
    \sup_{f\in\mathcal{F}}\int_{D_{\delta}}|f|^p \sigma \mathop{d V} <\infty.
$$
We claim that the functions in $\mathcal{F}$ are uniformly bounded on the compact set $\partial D_{\delta/2}$. Indeed, if $z \in \partial D_{\delta/2}$, then the Euclidean ball $B(z,\delta/4)$ is contained in $D \setminus D_\delta$. We then estimate for such a point $z$ and $f \in \mathcal{F}$ as follows
\begin{align*}
|f(z)| & \leq \frac{1}{ V(B(z,\delta/4))}\int_{B(z,\delta/4)} |f| \mathop{dV}\\
& \leq C_\delta \int_{D \setminus D_{\delta}} |f| \mathop{dV}\\
& \leq C_{\delta} \left(\int_{D \setminus D_\delta} |f|^p \sigma  \mathop{dV}\right)^{1/p} \left(\int_{D} \sigma^{-1/(p-1)} \mathop{dV}\right)^{1/p'} \\
& \leq C_{\delta,p,\sigma}.  
\end{align*}
By the maximum principle, the functions in $\mathcal{F}$ are uniformly bounded on $D_{\delta}$, and thus the above inequality holds. 
\end{rem}

\begin{rem}\label{BergmanRemark}
We note that compactness criteria for $\mathcal{A}^p(\mathbb{B}_n)$ follow from either of Theorem \ref{BesovSobolevRK} or Theorem \ref{BanachFrameRK1}. An application of Theorem \ref{BesovSobolevRK} with $D=\mathbb{B}_n$, $p \in [1,\infty)$, $\sigma \equiv \frac{1}{V(\mathbb{B}_n)}$, and $J=0$ shows that $\mathcal{F} \subseteq \mathcal{A}^p(\mathbb{B}_n)$ is precompact if and only if
\begin{equation*}
    \lim_{r \rightarrow 1^{-}} \sup_{f \in \mathcal{F}} \int_{\mathbb{B}_n \setminus r \mathbb{B}_n}|f(w)|^p \, d v(w)=0,
\label{BergmanBesov}\end{equation*} 
where $dv$ represents normalized Lebesgue measure on the unit ball. On the other hand, if $p \in (1,\infty)$, then $\mathcal{A}^p(\mathbb{B}_n)$ is a reflexive Banach space with a continuous frame $\{k_w^{(p)}, k_w^{(p')}\}$ with respect to $L^p(\mathbb{B}_n,d \lambda),$ where $k_w^{(p)}(z):= \frac{(1-|w|^2)^{\frac{n+1}{p'}}}{(1- z \overline{w})^{n+1}}$ denotes the ``$p$-normalized'' reproducing kernel at $w$ and $d\lambda (w):=(1-|w|^2)^{-(n+1)} dv(w)$ denotes the hyperbolic measure on $\mathbb{B}_n$. Theorem \ref{BanachFrameRK1} gives that $\mathcal{F}\subseteq \mathcal{A}^p(\mathbb{B}_n)$ is precompact if and only if 
\begin{equation*}
    \lim_{r \rightarrow 1^{-}} \sup_{f \in \mathcal{F}} \int_{\mathbb{B}_n \setminus r \mathbb{B}_n}|\langle f, k_w^{(p')} \rangle|^p \, d \lambda(w)=0.
\label{BergmanBanachFrame}\end{equation*} 
We also remark that Theorems \ref{HilbertFrameRK1} and \ref{HilbertFrameRK2} both apply in the Hilbert space case $p=2.$
\end{rem}

\begin{rem}
Both Corollary \ref{RadialBergmanSpaces} and Corollary \ref{BPBergmanSpaces} apply in the case of weighted Bergman spaces of $\mathbb{B}_n$ with radial weights $\sigma(z)=(1-|z|^2)^t$ for $t \in (-1,p-1)$, since $\sigma$ is a $B_p$ weight for this range of $t$. Corollary \ref{RadialBergmanSpaces} extends this fact to all $t >-1$, and Corollary \ref{BPBergmanSpaces} generalizes the result to arbitrary $B_p$ weights.
\end{rem}

The Hardy space, $\mathcal{H}^2(\mathbb{B}_n)$, is the space of holomorphic $f:\mathbb{B}_n\rightarrow\mathbb{C}$ such that
$$
    \|f\|_{\mathcal{H}^2(\mathbb{B}_n)}:= \sup_{0<r<1} \left(\int_{\partial \mathbb{B}_n} |f(r \zeta)|^2 \, d s(\zeta)\right)^{1/2}<\infty,
$$
where $s$ denotes the normalized Lebesgue surface measure on $\partial \mathbb{B}_n$. The functions in $\mathcal{H}^2(\mathbb{B}_n)$ have well-defined boundary values almost everywhere, and hence $\mathcal{H}^2(\mathbb{B}_n)$ can be isometrically identified with a closed subspace of $L^2(\partial \mathbb{B}_n)$, see \cite{Zhu}*{Theorem 4.25}. 
The Hardy space can be defined using the following equivalent norm which we also denote by $\|\cdot\|_{\mathcal{H}^2(\mathbb{B}_n)}$:
$$
        \|f\|_{\mathcal{H}^2(\mathbb{B}_n)}:= \left(\sum_{|\alpha|<J} \left|\frac{\partial^\alpha f}{\partial z^\alpha}(0)\right|^2 + \sum_{|\alpha|=J}\int_{\mathbb{B}_n} \left|\frac{\partial^\alpha f}{\partial z^\alpha}(w)\right|^2 (1-|w|^2)^{2J-1} \mathop{d v(w)}\right)^{1/2}<\infty,
$$
where $J$ is a positive integer, see \cite{ARS2008}. Note that this space is independent of $J$.

\begin{cor}\label{HardySpaces}
A bounded set $\mathcal{F}\subseteq \mathcal{H}^2(\mathbb{B}_n)$ is precompact if and only if 
$$
    \lim_{r \rightarrow 1^{-}} \sup_{f \in \mathcal{F}} \sum_{|\alpha|=J}\int_{\mathbb{B}_n \setminus r \mathbb{B}_n} \left|\frac{\partial^\alpha f}{\partial z^\alpha}(w)\right|^2 (1-|w|^2)^{2J-1} \mathop{d v(w)}=0.
$$
\end{cor}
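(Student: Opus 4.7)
The plan is to derive Corollary \ref{HardySpaces} as a direct application of Theorem \ref{BesovSobolevRK} with the choices $D = \mathbb{B}_n$, $p = 2$, weight $\sigma(w) = (1-|w|^2)^{2J-1}$, base point $z_0 = 0$, and the compact exhaustion $F_n := \{w \in \mathbb{B}_n : |w| \leq 1 - 1/n\}$. With these choices, the $\mathcal{B}^{2,J}_{\sigma}(\mathbb{B}_n)$-norm of Theorem \ref{BesovSobolevRK} coincides verbatim with the equivalent Hardy-space norm displayed just above the corollary, so $\mathcal{H}^2(\mathbb{B}_n) = \mathcal{B}^{2,J}_\sigma(\mathbb{B}_n)$ as Banach spaces, and the decay condition produced by Theorem \ref{BesovSobolevRK} becomes the condition of Corollary \ref{HardySpaces} once one writes $r = 1 - 1/n \to 1^{-}$.

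Before invoking Theorem \ref{BesovSobolevRK} I would check its structural hypotheses on the space. The weight $\sigma$ is a positive integrable weight on $\mathbb{B}_n$ since $2J - 1 > -1$ for every $J \in \mathbb{N}$, and the pointwise bound \eqref{BergmanInequalityCondition} holds for $\mathcal{H}^2(\mathbb{B}_n)$ by subharmonicity: for $z$ in a compact set $K \subsetneq \mathbb{B}_n$, fix $\delta = \tfrac12 \mathrm{dist}(K, \partial \mathbb{B}_n)$; the submean value inequality on $B(z,\delta) \subset \mathbb{B}_n$ together with the lower bound $\sigma \gtrsim_{K} 1$ on $B(z,\delta)$ yields $|f(z)|^2 \lesssim_K \int_{\mathbb{B}_n} |f|^2 \sigma \, dV$, with the right-hand side finite for $f \in \mathcal{H}^2(\mathbb{B}_n)$ by the standard slice-integration embedding $\mathcal{H}^2(\mathbb{B}_n) \hookrightarrow \mathcal{A}^2_{2J-1}(\mathbb{B}_n)$.

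Finally I would verify hypotheses (ii) and (iii) of Theorem \ref{BesovSobolevRK} for a bounded $\mathcal{F} \subseteq \mathcal{H}^2(\mathbb{B}_n)$. The reproducing kernel estimate $|f(z)| \leq C_z \|f\|_{\mathcal{H}^2}$ combined with Cauchy's formula on a polydisc inside $\mathbb{B}_n$ gives $|\partial^\alpha f(z)| \leq C_{z,\alpha} \|f\|_{\mathcal{H}^2}$, locally uniformly in $z$ for every $|\alpha| \leq J + 1$. This delivers (iii) immediately, and applying the mean value theorem to the bound for derivatives of order $J+1$ yields equicontinuity of $\{\partial^\alpha f : f \in \mathcal{F}\}$ on each compact $F_n$ for $|\alpha| \leq J$, establishing (ii). Theorem \ref{BesovSobolevRK} then completes the argument. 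I do not anticipate any genuine obstacle here; the only mildly technical step is the verification of \eqref{BergmanInequalityCondition}, and all the substantial work sits inside Theorem \ref{BesovSobolevRK} itself.
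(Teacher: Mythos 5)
Your proposal is correct and follows exactly the route the paper takes: the paper's own proof of Corollary \ref{HardySpaces} is simply the observation that it is a straightforward application of Theorem \ref{BesovSobolevRK} with $D=\mathbb{B}_n$, $p=2$, $\sigma(w)=(1-|w|^2)^{2J-1}$, using the equivalent derivative norm for $\mathcal{H}^2(\mathbb{B}_n)$. Your extra verification of conditions (ii) and (iii) is harmless but unnecessary, since those are established inside the proof of Theorem \ref{BesovSobolevRK} for any bounded family once \eqref{BergmanInequalityCondition} holds.
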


The Besov space $\mathcal{D}^p(\mathbb{B}_n)$ is the space of holomorphic $f:\mathbb{B}_n\rightarrow\mathbb{C}$ such that 
$$
        \|f\|_{\mathcal{D}^p(\mathbb{B}_n)}:= \left(\sum_{|\alpha|<J} \left|\frac{\partial^\alpha f}{\partial z^\alpha}(0)\right|^p + \sum_{|\alpha|=J}\int_{\mathbb{B}_n} \left|\frac{\partial^\alpha f}{\partial z^\alpha}(w)\right|^p (1-|w|^2)^{pJ-(n+1)} \mathop{d v(w)}\right)^{1/p}<\infty,
$$
where $J > \frac{n}{p}$ is an integer. These are precisely the scale of Besov spaces discussed in \cite{Zhu}*{Chapter 6}, and in the special case $p=2,$ $\mathcal{D}^2(\mathbb{B}_n)$ is the well-known Dirichlet space. As with the Hardy space, these Besov spaces do not depend on the choice of $J$.

\begin{cor}\label{DirichletSpaces}
A bounded set $\mathcal{F}\subseteq \mathcal{D}^p(\mathbb{B}_n)$ is precompact if and only if  
$$
    \lim_{r \rightarrow 1^{-}} \sup_{f \in \mathcal{F}} \sum_{|\alpha|=J}\int_{\mathbb{B}_n \setminus r \mathbb{B}_n} \left|\frac{\partial^\alpha f}{\partial z^\alpha}(w)\right|^p (1-|w|^2)^{pJ-(n+1)} \mathop{d v(w)}=0.
$$
\end{cor}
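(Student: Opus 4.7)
The plan is to apply Theorem \ref{BesovSobolevRK} directly, in the same spirit as the proof of Corollary \ref{HardySpaces}, by recognizing $\mathcal{D}^p(\mathbb{B}_n)$ as an instance of $\mathcal{B}^{p,J}_\sigma(D)$ with $D=\mathbb{B}_n$, $\sigma(w)=(1-|w|^2)^{pJ-(n+1)}$, and $z_0=0$. The weight $\sigma$ is integrable on $\mathbb{B}_n$ precisely because the hypothesis $J>n/p$ ensures $pJ-(n+1)>-1$. The linear maps $D_j$ required by Theorem \ref{BesovSobolevRK} are the complex partial derivative operators $\partial^\alpha/\partial z^\alpha$: the $|\alpha|=J$ operators contribute the $N$ integrated terms of the Besov norm, and the $|\alpha|<J$ operators contribute the $M$ evaluation terms at $z_0=0$, so the norm decomposition in the theorem matches the definition of $\|\cdot\|_{\mathcal{D}^p(\mathbb{B}_n)}$. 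As compact exhaustion we take $F_k=\overline{(1-1/k)\mathbb{B}_n}$, so that $\mathbb{B}_n\setminus F_k$ is exactly the region $\mathbb{B}_n\setminus(1-1/k)\mathbb{B}_n$ appearing in the statement, with $r=1-1/k\to 1^-$.

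The technical core is the verification of hypotheses (ii) and (iii) of Theorem \ref{BesovSobolevRK}, namely that for each $F_k$ and each relevant multi-index $\alpha$ the family $\{\partial^\alpha f:f\in\mathcal{F}\}$ is pointwise uniformly bounded and equicontinuous on $F_k$. For $|\alpha|=J$, the holomorphic function $\partial^\alpha f$ satisfies the sub-mean-value inequality, so choosing $r>0$ so that $B(w,r)\subset\mathbb{B}_n$ for all $w\in F_k$ and noting that $(1-|\zeta|^2)^{pJ-(n+1)}$ is bounded below on $B(w,r)$ by a constant depending only on $k$, we obtain
$$|\partial^\alpha f(w)|^p \leq C_{k}\int_{\mathbb{B}_n}\left|\frac{\partial^\alpha f}{\partial z^\alpha}(\zeta)\right|^p(1-|\zeta|^2)^{pJ-(n+1)}\,dv(\zeta) \leq C_k\|f\|_{\mathcal{D}^p(\mathbb{B}_n)}^p.$$
For $|\alpha|<J$, a Taylor expansion of $\partial^\alpha f$ at $z_0=0$ up to total order $J$ expresses it as a linear combination of $\partial^\beta f(0)$ for $|\beta|<J$ (each directly controlled by the Besov norm) plus an integral remainder involving $\partial^\gamma f$ with $|\gamma|=J$ on the segment from $0$ to $w$ (controlled by the previous pointwise bound along that segment). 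This yields the pointwise control required in (iii); equicontinuity in (ii) then follows from Cauchy's estimates applied to the holomorphic functions $\partial^\alpha f$, which are uniformly bounded on any compact set slightly larger than $F_k$.

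With (i)--(iii) in hand, Theorem \ref{BesovSobolevRK} applies and yields the precompactness characterization stated in the corollary. The main obstacle is the Taylor argument producing the pointwise control of $|\alpha|<J$ derivatives in terms of the Besov norm; although entirely standard, it is the one place where the holomorphic structure of $\mathcal{D}^p(\mathbb{B}_n)$ is invoked beyond the abstract framework of Theorem \ref{BesovSobolevRK}. No new difficulties arise compared with the proof of Corollary \ref{HardySpaces}; the only adjustment is taking $p$-th powers rather than squares throughout.
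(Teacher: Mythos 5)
Your proposal is correct and follows exactly the route the paper takes: Corollary \ref{DirichletSpaces} is obtained by applying Theorem \ref{BesovSobolevRK} with $D=\mathbb{B}_n$, $\sigma(w)=(1-|w|^2)^{pJ-(n+1)}$ (integrable precisely because $J>n/p$), and $D\setminus D_\delta=\mathbb{B}_n\setminus(1-\delta)\mathbb{B}_n$. The only remark is that your re-verification of the equicontinuity and pointwise boundedness hypotheses is redundant — those belong to Theorem \ref{BanachMeanValueRK} and are already established for arbitrary bounded subsets of any $\mathcal{B}^{p,J}_\sigma(D)$ in the paper's proof of Theorem \ref{BesovSobolevRK}, so only the weight's integrability and condition \eqref{BergmanInequalityCondition} genuinely need checking here.
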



\subsection{Applications of compactness criteria}
We apply our compactness criteria to characterize the compact Toeplitz operators on the Bergman space, deduce the compactness of Hankel operators on the Hardy space, and obtain general umbrella theorems. We have chosen to provide only a sampling of the possible applications of our results. It is clear that more could be done including working with different frames, extending applications outside of $L^2$/$L^p$ settings, and obtaining additional operator theoretic applications; however, we aim to provide just a flavor of the possible applications. 

\subsubsection{Compactness of Toeplitz operators on the Bergman space}
Our first application is a characterization of the compact Toeplitz operators on the Bergman space of the unit ball. Given a function $u$ on $\mathbb{B}_n$, the Toeplitz operator assosociated to $u$, $T_u$, is given by
$$
    T_uf(z):=P(uf)(z)=\int_{\mathbb{B}_n}\frac{1}{(1-z\overline{w})^{n+1}}u(w)f(w)\,dv(w),
$$
where $P$ denotes the Bergman projection from $L^2(\mathbb{B}_n)$ onto $\mathcal{A}^2(\mathbb{B}_n)$ and $z\overline{w}=\sum_{j=1}^nz_j\overline{w}_j$. Below, 
$\widetilde{T}$ represents the Berezin transform of a bounded operator $T$ on $\mathcal{A}^p(\mathbb{B}_n)$ defined by
$$
    \widetilde{T}(z):=\langle Tk_z, k_z\rangle_{\mathcal{A}^2(\mathbb{B}_n)},
$$ 
where $k_z:=k_z^{(2)}$ is the normalized reproducing kernel of $\mathcal{A}^2(\mathbb{B}_n)$ at $z$. 
\begin{thm}\label{ToeplitzCompactness}
Let $T$ be a finite sum of finite products of Toeplitz operators with $L^\infty(\mathbb{B}_n)$ symbols and $p \in (1,\infty)$. Then $T$ is compact on $\mathcal{A}^p(\mathbb{B}_n)$ if and only if 
$$
    \lim_{|z| \rightarrow 1^-}\widetilde{T}(z)=0.
$$
\end{thm}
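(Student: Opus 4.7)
The forward direction is the routine one. Operators in the Toeplitz algebra generated by bounded-symbol Toeplitz operators are simultaneously compact on $\mathcal{A}^p(\mathbb{B}_n)$ for every $p\in(1,\infty)$, so it suffices to work on $\mathcal{A}^2$. Since the normalized reproducing kernels $k_z=k_z^{(2)}$ converge weakly to zero in $\mathcal{A}^2(\mathbb{B}_n)$ as $|z|\to 1^-$, compactness of $T$ on $\mathcal{A}^2$ gives $\|Tk_z\|_{\mathcal{A}^2}\to 0$, and Cauchy--Schwarz then yields $|\widetilde{T}(z)|\le \|Tk_z\|_{\mathcal{A}^2}\to 0$.

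The reverse implication is the substantive direction. My plan is to invoke the compactness criterion from Remark \ref{BergmanRemark}. Using the reproducing identity $\langle g,k_w^{(p')}\rangle=(1-|w|^2)^{(n+1)/p}\,g(w)$ and the identity $d\lambda(w)=(1-|w|^2)^{-(n+1)}\,dv(w)$, the criterion reduces compactness of $T$ on $\mathcal{A}^p(\mathbb{B}_n)$ to the uniform decay
$$
    \lim_{r\to 1^-}\sup_{\|f\|_{\mathcal{A}^p}\le 1}\int_{\mathbb{B}_n\setminus r\mathbb{B}_n}|Tf(w)|^p\,dv(w)=0,
$$
which is what I would prove from the hypothesis $\widetilde{T}(z)\to 0$.

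To establish this decay, I would fix a sufficiently fine Bergman-metric lattice $\{a_k\}\subset\mathbb{B}_n$ and employ the standard atomic decomposition $f=\sum_k c_k\,k_{a_k}^{(p)}$ with $\|\{c_k\}\|_{\ell^p}\lesssim\|f\|_{\mathcal{A}^p}$. This reduces the question to analyzing the matrix $T_{jk}=\langle Tk_{a_k}^{(p)},k_{a_j}^{(p')}\rangle$: the diagonal entries $T_{kk}$ coincide with $\widetilde{T}(a_k)$ up to a universal factor, and vanish as $|a_k|\to 1$ by hypothesis; the off-diagonal entries should decay geometrically in the Bergman distance between $a_j$ and $a_k$, via the pointwise bounds $|\langle k_{a_j},k_{a_k}\rangle_{\mathcal{A}^2}|\lesssim e^{-c\,d(a_j,a_k)}$ combined with the finite sum-of-products structure of $T$. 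Applying an $\ell^p$-Schur test to this matrix, the block controlling $T$ into the shell $\mathbb{B}_n\setminus r\mathbb{B}_n$ would then have operator norm tending to zero as $r\to 1^-$, yielding the required estimate.

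The main obstacle is the off-diagonal analysis: propagating the pointwise decay of the Bergman kernel through a finite sum of products of bounded-symbol Toeplitz operators so as to obtain uniform geometric off-diagonal decay of $T_{jk}$. This is essentially the localization content of Su\'arez's theorem and is where the bulk of the technical work lies. The novel element of the approach is that once this localization is in hand, the new Banach-space Riesz--Kolmogorov criterion (Remark \ref{BergmanRemark}) supplies the final step for all $p\in(1,\infty)$ in a single stroke, removing the need for a bespoke finite-rank approximation and giving the characterization uniformly in $p$.
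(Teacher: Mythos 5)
Your overall architecture is close to the paper's: both arguments reduce matters to the frame-theoretic Riesz--Kolmogorov criterion of Remark \ref{BergmanRemark} and then run a Schur-type test on the ``matrix'' $\langle Tk_z,k_w\rangle_{\mathcal{A}^2(\mathbb{B}_n)}$. The paper, however, works directly with the \emph{continuous} frame $\{k_w^{(p')}\}$ and a continuous Schur test (Proposition \ref{Bergmancompactnesstest} and Theorem \ref{Bergmancompactoperatorcharacterization}), so no atomic decomposition or lattice is needed. More importantly, your discrete version as stated has two genuine gaps.

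First, knowing only that the \emph{diagonal} entries $T_{kk}\simeq\widetilde{T}(a_k)$ vanish as $|a_k|\to1^-$, together with a uniform geometric off-diagonal bound $|T_{jk}|\lesssim e^{-c\,\beta(a_j,a_k)}$, does not force the norm of the block into the shell to tend to zero: the nearest-neighbor entries are then only bounded by a fixed constant $e^{-c\,\mathrm{sep}}$, and a matrix with vanishing diagonal but order-one entries at bounded lattice distance need not have small norm. What is actually needed is that the \emph{near-diagonal} entries vanish, i.e.\ that for each fixed $R>0$ one has $\sup_{w\in D(z,R)}|\langle Tk_z,k_w\rangle_{\mathcal{A}^2(\mathbb{B}_n)}|\to0$ as $|z|\to1^-$. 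That the vanishing Berezin transform implies this (condition \eqref{WeakBerezinCondition}) is a nontrivial lemma, proved in \cite{IMW2015}, and it is the pivot of the whole reverse implication; your proposal omits it. Second, the localization of a finite sum of finite products of bounded-symbol Toeplitz operators is not available in the pointwise form $|T_{jk}|\lesssim e^{-c\,\beta(a_j,a_k)}$ that your $\ell^p$-Schur test presupposes; what the Rudin--Forelli estimates give (and what the paper uses, via \cite{IMW2015}*{Propositions 2.2 and 2.3}) are the \emph{integral} conditions \eqref{Bergmanboundedrows} and \eqref{Bergmandecayingcomplements}, namely uniform bounds and uniform decay of $\int_{\mathbb{B}_n\setminus D(z,R)}|\langle Tk_z,k_w\rangle|\,(\cdot)\,d\lambda(w)$. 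The paper's proof then splits $\mathbb{B}_n\setminus D(0,R)$ into the part inside $D(z,R_0)$ (handled by \eqref{WeakBerezinCondition}) and the part outside (handled by \eqref{Bergmandecayingcomplements}); some such splitting, with both ingredients in place, is unavoidable, and your outline currently supplies neither in a usable form.
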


Theorem \ref{ToeplitzCompactness} contains the seminal result of Axler and Zheng from \cite{AZ1998}. 
Theorem \ref{ToeplitzCompactness} has recently been extended strongly pseudoconvex domains with smooth boundary by Wang and Xia in \cite{WX2021}*{Proposition 9.3}, however our methods are different and considerably less involved. See also \cites{IMW2015,MSW2013,S2007} for related results. Additionally, we mention that the analogous result for smoothly bounded strongly pseudoconvex domains can be obtained with our methods using our Corollary \ref{RadialBergmanSpaces} or Corollary \ref{BPBergmanSpaces}.

\subsubsection{Compactness of little Hankel operators on the Hardy space}
Our second application deals with the compactness of little Hankel operators on the Hardy space. 
Given $g \in \mathcal{H}^2(\mathbb{B}_n)$, the little Hankel operator, $H_g$, is given by 
$$
    H_gf(z):=S(g \overline{f})(z)=\int_{\partial\mathbb{B}_n}\frac{1}{(1-z\overline{w} )^n}g(w)\overline{f(w)}\,ds(w),
$$
where $S$ denotes the Szeg\H{o} projection from $L^2(\partial \mathbb{B}_n)$ onto $\mathcal{H}^2(\mathbb{B}_n)$. 
Below, VMOA represents the space of $f\in \mathcal{H}^2(\mathbb{B}_n)$ with vanishing mean oscillation, that is

$$
    \lim_{r \rightarrow 0^{+}} \sup_{\zeta \in \partial \mathbb{B}_n} \frac{1}{s(Q(\zeta,r))} \int_{Q(\zeta, r)} \left|f(w)-\frac{1}{s(Q(\zeta,r))}\int_{Q(\zeta,r)} f\,ds \right|^2 \,ds(w)=0,
$$
where $Q(\zeta,r)$ denotes to the ball in $\partial \mathbb{B}_n$ centered at $\zeta$ of radius $r$ with respect to the non-isotropic metric $d(z,w)=|1- z\overline{w}|^{1/2}$, see \cite{Zhu}. 
\begin{thm}\label{HankelCompactness}
Let $g \in \mathcal{H}^2( \mathbb{B}_n)$. The little Hankel operator $H_g$ is compact on $\mathcal{H}^2( \mathbb{B}_n)$ if and only if $g \in \text{VMOA}.$
\end{thm}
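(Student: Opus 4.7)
The plan is to handle the two directions of the equivalence separately, combining a standard testing argument in one direction with an approximation argument (or our new compactness criterion) in the other.

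For the direction ``$H_g$ compact $\Rightarrow g \in \mathrm{VMOA}$,'' I would use the family of normalized Szeg\H{o} reproducing kernels $k_z(w)=(1-|z|^2)^{n/2}(1-w\bar{z})^{-n}$, which satisfy $\|k_z\|_{\mathcal{H}^2}=1$ and converge weakly to $0$ as $|z|\to 1^-$. Compactness of $H_g$ therefore forces $\|H_g k_z\|_{\mathcal{H}^2}\to 0$. Using the reproducing property of the Szeg\H{o} kernel together with the defining formula for $H_g$, a direct computation identifies
$$\|H_g k_z\|_{\mathcal{H}^2}^2=\int_{\partial\mathbb{B}_n}\bigl|g(w)-g_z\bigr|^2|k_z(w)|^2\,ds(w),$$
where $g_z=\langle gk_z,k_z\rangle_{\mathcal{H}^2}$ is the Berezin average of $g$ at $z$. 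This Berezin-type variance dominates the classical mean oscillation of $g$ on the non-isotropic ball $Q(z/|z|,\sqrt{1-|z|^2})$, using that $|k_z|^2\,ds$ behaves like a probability measure concentrated on that ball. The vanishing of $\|H_g k_z\|$ as $|z|\to 1^-$ then translates into the defining property of $\mathrm{VMOA}$ upon letting $\zeta=z/|z|$ and $r=\sqrt{1-|z|^2}\to 0$.

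For the converse ``$g\in\mathrm{VMOA}\Rightarrow H_g$ compact,'' my preferred route is an approximation argument. Polynomials are dense in $\mathrm{VMOA}$ with respect to the $\mathrm{BMOA}$ norm (classical on the ball), and the Nehari-type theorem supplies $\|H_g\|_{\mathcal{H}^2\to\mathcal{H}^2}\lesssim\|g\|_{\mathrm{BMOA}}$. For any polynomial $p$, the operator $H_p$ maps $\mathcal{H}^2(\mathbb{B}_n)$ into the finite-dimensional space of polynomials of degree at most $\deg p$, so $H_p$ has finite rank. Writing $g=\lim_k p_k$ in $\mathrm{BMOA}$ gives $H_g=\lim_k H_{p_k}$ in operator norm, so $H_g$ is a norm limit of finite-rank operators and hence compact. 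As an alternative more in the spirit of the paper, one could verify the tail condition of Corollary \ref{HardySpaces} applied directly to the bounded set $\{H_g f:\|f\|_{\mathcal{H}^2}\le 1\}$ by differentiating under the integral in the defining formula for $H_g$, producing a Bergman-type integral whose boundary tails are controlled, uniformly in $f$, by the oscillation of $g$ that vanishes by the $\mathrm{VMOA}$ hypothesis.

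The main obstacle is in the necessity direction: rigorously translating the vanishing of the Berezin-type variance into the definition of $\mathrm{VMOA}$ in terms of non-isotropic Kor\'anyi balls requires the standard but delicate dictionary between the Szeg\H{o} kernel measure $|k_z|^2\,ds$ and surface measure on the balls $Q(z/|z|,\sqrt{1-|z|^2})$. The sufficiency direction, while short via approximation, implicitly relies on the Nehari-type boundedness result on the sphere, which is classical but external to the compactness machinery developed in this paper; implementing sufficiency through Corollary \ref{HardySpaces} directly avoids this dependency but requires more computation with the differentiated Szeg\H{o} kernel.
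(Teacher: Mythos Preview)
Your proposal is essentially correct, but you should know that the paper only proves the sufficiency direction (it says so explicitly: ``we only supply a proof for this direction of the theorem''), so the comparison is really about that half.

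For sufficiency, your preferred route---density of polynomials in $\mathrm{VMOA}$ together with the Nehari-type bound $\|H_g\|\lesssim\|g\|_{\mathrm{BMOA}}$, yielding $H_g$ as a norm limit of finite-rank operators---is precisely the classical argument the paper \emph{explicitly avoids}. The paper instead carries out your ``alternative'' in full: it verifies the tail condition of Corollary~\ref{HardySpaces} directly for the set $\{H_gf:\|f\|_{\mathcal{H}^2}\le1\}$. Concretely, the paper first uses that every $g\in\mathrm{VMOA}$ is the Szeg\H{o} projection of some $\tilde g\in C(\partial\mathbb{B}_n)$, so one may assume $g$ continuous; it then differentiates under the integral to produce the kernel $(1-z\bar w)^{-(n+1)}$, partitions $\partial\mathbb{B}_n$ into finitely many pieces $D_j$ (via a cube decomposition of $\mathbb{R}^{2n}$) on which $g$ oscillates by less than $\varepsilon$, and splits the resulting double sum into ``far'' pairs $D_j\cap D_k=\emptyset$ (where the kernel is uniformly bounded, so the tail over $\mathbb{B}_n\setminus R\mathbb{B}_n$ is small for $R$ near $1$) and ``near'' pairs (controlled by the small oscillation of $g$ together with the $L^2$-boundedness of the Szeg\H{o} projection and the equivalence of $\mathcal{H}^2$ norms). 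Your approximation argument is shorter but imports the $\mathrm{BMOA}$ boundedness theorem from outside; the paper's argument is self-contained relative to its own compactness machinery and serves as a genuine demonstration of Corollary~\ref{HardySpaces}.

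One caution on your necessity sketch: the identity $\|H_gk_z\|_{\mathcal{H}^2}^2=\int_{\partial\mathbb{B}_n}|g-g_z|^2|k_z|^2\,ds$ is not correct for the little Hankel operator $H_gf=S(g\bar f)$ as defined here (note that $H_g$ is antilinear, and $S$ acting on $(g-g_z)\overline{k_z}$ only gives an inequality in the wrong direction for your purposes). The standard necessity argument instead passes through the M\"obius-invariant description of $\mathrm{VMOA}$ via $\|g\circ\varphi_z-g(z)\|_{\mathcal{H}^2}\to0$, or through the unitary equivalence of $H_g$ with a big Hankel operator whose kernel testing does yield the variance. Your overall plan (test on normalized kernels, use weak convergence to zero) is the right one, but the bookkeeping in that displayed formula needs repair.
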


This result first appeared in \cite{Hartman1958} in the 1-dimensional setting and appears in \cite{CRW} for the unit ball in $\mathbb{C}^n$. Similar questions have been considered for more general commutators of Calder\'{o}n-Zygmund operators in \cite{U1978}.  Our framework is best suited to proving the sufficiency of VMOA for compactness -- we only supply a proof for this direction of the theorem.

\subsubsection{General umbrella theorems} 

The following is a form of uncertainty principle of Fourier analysis known as Shapiro's umbrella theorem that follows quickly from the classical Riesz-Kolmogorov theorem on $L^2(\mathbb{R})$, or more precisely from Theorem~C above, see \cite{JP2007}: 
``Let $\varphi, \psi \in L^2(\mathbb{R})$. If $\{e_k\} \subseteq L^2(\mathbb{R})$ is an orthonormal sequence of functions such that for each $k$ and almost every $x,\xi \in \mathbb{R}$, 
$$
    |e_k(x)|\leq|\varphi(x)| \quad\quad\text{and}\quad\quad|\widehat{e_k}(\xi)|\leq|\psi(\xi)|,
$$
then $\{e_k\}$ is finite." In other words, no infinite orthonormal sequence of $L^2$ functions can have common umbrella functions $\varphi, \psi \in L^2(\R)$. 

The orthonormality assumption in the umbrella theorem can be replaced with many weaker conditions, such as separation, being a Bessel sequence, being a frame for its closed span, being a Schauder basis for its closed span, etc.. 
Any sequence of vectors having a common umbrella, due to Riesz-Kolmogorov type criteria is forced to be compact, and consequently to have a convergent subsequence, which is not possible for any of the above mentioned types of sequences. With this in mind, it is clear that each of our compactness criteria will imply a corresponding umbrella theorem. We only state the one for Besov-Sobolev spaces.

\begin{thm}
Let $D \subseteq \mathbb{C}^n$ be a bounded domain and $\sigma$ be an integrable weight on $D$. 
Let $\FF\subseteq \mathcal{B}_{\sigma}^{p,J}(D)$ be a separated family of functions, that is, there exists $\delta>0$ such that $\|f-g\|_{\mathcal{B}_{\sigma}^{p,J}(D)}\geq \delta$ for all distinct $f, g\in \FF$. If there exists $\varphi \in L^p_\sigma(D)$ such that 
\[
    \left|\frac{\partial^\alpha f}{\partial z^\alpha}(z)\right|\leq \varphi(z)
\]
for all $f\in \mathcal{F}$, $z\in D$, and $|\alpha|=J$, then $\FF$ is a finite set.
\end{thm}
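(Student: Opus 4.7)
My plan is to derive this umbrella theorem as an immediate consequence of the compactness criterion in Theorem~\ref{BesovSobolevRK}, combined with the elementary principle that a precompact separated family must be finite. The first step is to verify the hypotheses of Theorem~\ref{BesovSobolevRK} for $\mathcal{F}$. The tail condition is immediate from the pointwise umbrella bound: since $|\partial^\alpha f/\partial z^\alpha(z)|^p\sigma(z)\leq\varphi(z)^p\sigma(z)$ for each $f\in\mathcal{F}$ and $|\alpha|=J$, and since $\varphi^p\sigma\in L^1(D)$, absolute continuity of the Lebesgue integral gives
\[
    \sup_{f\in\mathcal{F}}\sum_{|\alpha|=J}\int_{D\setminus D_\delta}\left|\frac{\partial^\alpha f}{\partial z^\alpha}\right|^p\sigma\,dV \;\leq\; C_{n,J}\int_{D\setminus D_\delta}\varphi^p\sigma\,dV \;\longrightarrow\; 0
\]
as $\delta\to 0^+$.

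For the boundedness of $\mathcal{F}$ in $\mathcal{B}^{p,J}_\sigma(D)$, I would note that the integral terms in the norm are uniformly dominated by $C_{n,J}\|\varphi\|_{L^p_\sigma}^p$, while the lower-order point evaluations $|\partial^\alpha f(z_0)|$ for $|\alpha|<J$ are controlled by applying the umbrella pointwise at $z_0$. With both hypotheses in hand, Theorem~\ref{BesovSobolevRK} yields that $\mathcal{F}$ is precompact in $\mathcal{B}^{p,J}_\sigma(D)$.

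To conclude, I would argue by contradiction: suppose $\mathcal{F}$ is infinite and pick a sequence $\{f_n\}$ of distinct elements of $\mathcal{F}$. Precompactness furnishes a convergent, hence Cauchy, subsequence $\{f_{n_k}\}$, which directly contradicts the separation hypothesis $\|f_{n_k}-f_{n_l}\|_{\mathcal{B}^{p,J}_\sigma(D)}\geq\delta$ for $k\neq l$. Hence $\mathcal{F}$ must be finite. The main technical obstacle is the boundedness step, and specifically the uniform control of the lower-order point evaluations at $z_0$, which relies on reading the umbrella as also furnishing a pointwise bound at that distinguished point; everything else is a straightforward application of the precompactness criterion already established in the paper.
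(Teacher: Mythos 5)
Your overall strategy --- verify the two hypotheses of Theorem \ref{BesovSobolevRK}, conclude precompactness, and then invoke the elementary fact that a $\delta$-separated precompact set is finite --- is exactly the route the paper intends (it offers no written proof, only the remark that each compactness criterion yields an umbrella theorem). Your verification of the tail condition and the final separation argument are both correct. The problem is the boundedness step, and you have put your finger on it yourself without resolving it: the umbrella hypothesis is stated only for multi-indices with $|\alpha|=J$, so it gives \emph{no} control on the lower-order point evaluations $\left|\frac{\partial^\beta f}{\partial z^\beta}(z_0)\right|$, $|\beta|<J$, which are part of the $\mathcal{B}^{p,J}_\sigma(D)$ norm. "Reading the umbrella as also furnishing a pointwise bound at that distinguished point" is not a step in a proof; it is an additional hypothesis. (Even granting it, $\varphi$ is merely an element of $L^p_\sigma(D)$, so $\varphi(z_0)$ need not be finite; one would have to average the pointwise bound over a small ball about $z_0$ rather than evaluate at the point.)

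This gap is not cosmetic: for $J\geq 1$ the statement is false without some uniform control of the lower-order data. Fix any $f_0$ satisfying the umbrella and consider $\mathcal{F}=\{f_0+k\delta : k\in\mathbb{N}\}$ (or, more generally, translates of $f_0$ by polynomials of degree $<J$). Every element has the same derivatives of order $J$, hence obeys the same umbrella $\varphi$; the family is $\delta$-separated in the $\mathcal{B}^{p,J}_\sigma(D)$ norm because the norm sees $|f(z_0)|$; and it is infinite. So either the theorem must be read with the umbrella (or some uniform bound) imposed on all derivatives of order $\leq J$ --- in which case your argument, with the evaluation at $z_0$ replaced by an average over a small ball together with H\"older's inequality, goes through --- or the separation must be measured in the top-order seminorm $\bigl(\sum_{|\alpha|=J}\int_D|\partial^\alpha f/\partial z^\alpha|^p\sigma\,dV\bigr)^{1/p}$, in which case one applies the compactness argument to the quotient by polynomials of degree $<J$. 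In the case $J=0$ (which the paper does use, e.g.\ for Bergman spaces) there are no lower-order terms and your proof is complete as written.
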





The remainder of the paper is organized as follows. We prove our precompactness characterizations Theorem \ref{HilbertRK}, Theorem \ref{HilbertFrameRK1}, Theorem \ref{HilbertFrameRK2}, Theorem \ref{BanachFrameRK1}, Theorem \ref{BanachMeanValueRK}, Theorem \ref{BesovSobolevRK}, Corollary \ref{RadialBergmanSpaces},
Corollary \ref{BPBergmanSpaces}, Corollary \ref{HardySpaces}, and Corollary \ref{DirichletSpaces} 
in Section \ref{AbstractCompactnessProofs}. We then prove the characterization of compact Toeplitz operators on $\mathcal{A}^p(\mathbb{B}_n)$, 
Theorem \ref{ToeplitzCompactness}, and the compactness of little Hankel operators on $\mathcal{H}^2(\mathbb{B}_n)$, Theorem \ref{HankelCompactness}, in Section \ref{BergmanSection}. 


\section{Proofs of compactness criteria} \label{AbstractCompactnessProofs}


\subsection{General compactness characterizations}

\begin{proof}[Proof of Theorem \ref{HilbertRK}]
To prove the forward direction, we suppose $\mathcal{F}$ is precompact and proceed by contradiction. Assuming that the uniform decay condition fails, there exists $\varepsilon_0>0$ such that for each $n\in\mathbb{N}$, there exists $f_n\in \mathcal{F}$ with 
$$
    |\langle T_nf_n-f_n,f_n\rangle_{\mathcal{H}}|> \varepsilon_0.
$$
Consider the sequence $\{f_n\}_{n=1}^{\infty}$. By the precompactness of $\mathcal{F}$, there exists a subsequence $\{f_{n_k}\}_{k=1}^{\infty}$ converging to some $f$ in $\mathcal{H}$. We claim that for all $k>0$, we have 
$$
    |\langle T_{n_k}f-f,f\rangle_{\mathcal{H}}|> \frac{\varepsilon_0}{2},
$$
providing a contradiction since $\lim_{n\rightarrow\infty}\langle T_nf-f,f\rangle_{\mathcal{H}}=0$. 

In order to justify the claim, we first note that the condition $\lim_{n\rightarrow \infty}\langle T_nf-f,f\rangle_{\mathcal{H}}=0$ for all $f \in \mathcal{H}$ and polarization imply that 
\begin{align*}
   \lim_{n\rightarrow \infty} \langle T_nf,g\rangle_{\mathcal{H}}
   &=\lim_{n\rightarrow\infty}\frac{1}{4}\sum_{k=0}^{3}i^k\langle T_n(f+i^kg),f+i^kg\rangle_{\mathcal{H}}\\
   &=\frac{1}{4}\sum_{k=0}^{3}i^k\langle f+i^kg,f+i^kg\rangle_{\mathcal{H}}\\
   &= \langle f,g\rangle_{\mathcal{H}}
\end{align*}
for all $f, g \in \mathcal{H}$, which implies that the $T_n$ are uniformly bounded. Indeed, the above weak operator topology convergence of the $T_n$ implies that the linear functionals $T_{n,f}:\mathcal{H}\rightarrow\mathbb{C}$ given by $T_{n,f}g=\langle T_nf, g\rangle_{\mathcal{H}}$ satisfy
$$
    \sup_{n>0} |T_{n,f}g|=\sup_{n>0}|\langle T_nf,g\rangle_{\mathcal{H}}|<\infty
$$
for all $f,g\in\mathcal{H}$. By the uniform boundedness principle, 
$$
    \sup_{n>0}\|T_nf\|_{\mathcal{H}}=\sup_{n>0}\|T_{n,f}\|_{\mathcal{H}\rightarrow\mathbb{C}}<\infty
$$
for all $f \in \mathcal{H}$, and therefore, by another application of the uniform boundedness principle,
$$
    B:=\sup_{n>0}\|T_n\|_{\mathcal{H}\rightarrow\mathcal{H}}<\infty.
$$
Write $A:=\sup_{k>0}\|f_{n_k}\|_{\mathcal{H}}$ and choose $k>0$ large enough such that
$$
    \|f_{n_k}-f\|_{\mathcal{H}}<\max\left(\frac{\varepsilon_0}{4(B+1)\|f\|_{\mathcal{H}}},\frac{\varepsilon_0}{4(B+1)A}\right).
$$
Using the reverse triangle inequality, we have for each $k>0$ that 
\begin{align*}
    |\langle T_{n_k}f-f,f\rangle_{\mathcal{H}}|&\ge |\langle T_{n_k}f_{n_k}-f_{n_k},f_{n_k}\rangle_{\mathcal{H}}|-|\langle T_{n_k}f-f,f\rangle_{\mathcal{H}}-\langle T_{n_k}f_{n_k}-f_{n_k},f_{n_k}\rangle_{\mathcal{H}}|\\
    &> \varepsilon_0-(|\langle T_{n_k}f-f,f-f_{n_k}\rangle_{\mathcal{H}}|+|\langle T_{n_k}(f-f_{n_k})-(f-f_{n_k}),f_{n_k}\rangle_{\mathcal{H}}|).
\end{align*}
The claim holds since, the  Cauchy-Schwarz inequality implies
\begin{align*}
    |\langle T_{n_k}f-f,f-f_{n_k}\rangle_{\mathcal{H}}|
    &\leq \|T_{n_k}f-f\|_{\mathcal{H}}\|f-f_{n_k}\|_{\mathcal{H}}\\
    &\leq (B+1)\|f\|_{\mathcal{H}}\|f-f_{n_k}\|_{\mathcal{H}}\\
    &< (B+1)\|f\|_{\mathcal{H}}\left(\frac{\varepsilon_0}{4(B+1)\|f\|_{\mathcal{H}}}\right)=\frac{\varepsilon_0}{4}
\end{align*}
and similarly
\begin{align*}
    |\langle T_{n_k}(f-f_{n_k})-(f-f_{n_k}),f_{n_k}\rangle_{\mathcal{H}}|
    &\leq\|T_{n_k}(f-f_{n_k})-(f-f_{n_k})\|_{\mathcal{H}}\|f_{n_k}\|_{\mathcal{H}}\\
    &\leq (B+1)A\|f-f_{n_k}\|_{\mathcal{H}}\\
    &< (B+1)A\left(\frac{\varepsilon_0}{4(B+1)A}\right)=\frac{\varepsilon_0}{4}.
\end{align*}

To establish the reverse direction, let $\{f_j\}_{j=1}^{\infty}\subseteq \mathcal{F}$. Since $\mathcal{F}$ is bounded, there exists a subsequence $\{f_{j_k}\}_{k=1}^{\infty}$ converging weakly to some $f\in\mathcal{H}$; we claim that $\{f_{j_k}\}_{k=1}^{\infty}$ converges strongly to $f$. Let $\varepsilon>0$ and apply the hypotheses to choose $N>0$ such that 
$$
    \sup_{k>0}|\langle T_Nf_{j_k}-f_{j_k},f_{j_k}\rangle_{\mathcal{H}}|<\frac{\varepsilon}{4} \quad\quad\text{and}\quad\quad |\langle T_Nf-f,f\rangle_{\mathcal{H}}|<\frac{\varepsilon}{4}.
$$ 
By the compactness of $T_{N}$, we have that $\{T_{N}f_{j_k}\}_{k=1}^{\infty}$ converges strongly to $T_{N}f$, which implies that $\langle T_{N}f_{j_k}-f,f_{j_k}\rangle_{\mathcal{H}} \rightarrow \langle T_Nf-f,f\rangle_{\mathcal{H}}$ as $k\rightarrow \infty$. Take $K>0$ such that 
$$
    |\langle T_Nf_{j_k}-f,f_{j_k}\rangle_{\mathcal{H}}|<|\langle T_Nf-f,f\rangle_{\mathcal{H}}|+\frac{\varepsilon}{4} \quad\quad\text{and}\quad\quad |\langle f_{j_k}-f,f\rangle_{\mathcal{H}}|<\frac{\varepsilon}{4}
$$
for all $k> K$. Then
\begin{align*}
    \|f_{j_k}-f\|_{\mathcal{H}}^2 &= - \langle T_Nf_{j_k}-f_{j_k},f_{j_k}\rangle_{\mathcal{H}}+\langle T_Nf_{j_k}-f_{j_k},f_{j_k}\rangle_{\mathcal{H}}+\langle f_{j_k}-f,f_{j_k}-f\rangle_{\mathcal{H}}\\
    &\leq |\langle T_Nf_{j_k}-f_{j_k},f_{j_k}\rangle_{\mathcal{H}}|+|\langle T_Nf_{j_k}-f,f_{j_k}\rangle_{\mathcal{H}}|+|\langle f_{j_k}-f,f\rangle_{\mathcal{H}}|\\
    &< |\langle T_Nf_{j_k}-f_{j_k},f_{j_k}\rangle_{\mathcal{H}}|+|\langle T_Nf-f,f\rangle_{\mathcal{H}}|+\frac{\varepsilon}{4}+|\langle f_{j_k}-f,f\rangle_{\mathcal{H}}|\\
    &< \frac{\varepsilon}{4}+\frac{\varepsilon}{4}+\frac{\varepsilon}{4}+\frac{\varepsilon}{4}=\varepsilon
\end{align*}
for all $k>K$. 
\end{proof}

\begin{proof}[Proof of Theorem \ref{HilbertFrameRK1}]
The proof of the forward direction is similar to the corresponding implication in the proof of Theorem \ref{HilbertRK}. Suppose $\mathcal{F}$ is precompact and assume towards contradiction that there exists $\varepsilon_0>0$ such that for each $n \in \mathbb{N}$, there exists $f_n \in \mathcal{F}$ with 
$$
    \int_{X \setminus F_n}|\langle f_n,k_x\rangle_{\mathcal{H}}|^2\,d\mu(x)>\varepsilon_0.
$$
Since $\mathcal{F}$ is precompact, there exists a subsequence $\{f_{n_k}\}_{k=1}^{\infty}$ of $\{f_n\}_{n=1}^{\infty}$ that converges to some $f$ in $\mathcal{H}$. We claim that 
$$
    \int_{X\setminus F_{n_k}}|\langle f,k_x\rangle_{\mathcal{H}}|^2\,d\mu(x) > \frac{\varepsilon_0}{4}
$$
for all $k \in \mathbb{N}$, providing a contradiction since $f \in \mathcal{H}$. Choose $k \in \mathbb{N}$ large enough such that $\|f-f_{n_k}\|_{\mathcal{H}}<\frac{\varepsilon_0^{1/2}}{2}$. Then applying the reverse triangle inequality, we have
\begin{align*}
    \left(\int_{X\setminus F_{n_k}}|\langle f,k_x\rangle_{\mathcal{H}}|^2\,d\mu(x)\right)^{1/2}&\ge \left(\int_{X \setminus F_{n_k}}|\langle f_{n_k},k_x\rangle_{\mathcal{H}}|^2\,d\mu(x)\right)^{1/2}\\
    &\quad\quad-\left(\int_{X \setminus F_{n_k}}|\langle f-f_{n_k},k_x\rangle_{\mathcal{H}}|^2\,d\mu(x)\right)^{1/2}\\
    &\ge \varepsilon_0^{1/2}-\|f-f_{n_k}\|_{\mathcal{H}}\\
    &> \frac{\varepsilon_0^{1/2}}{2}.
\end{align*}

To prove the other direction, let $\{f_j\}_{j=1}^{\infty}\subseteq \mathcal{F}$. Since $\mathcal{F}$ is bounded, there exists a subsequence $\{f_{j_k}\}_{k=1}^{\infty}$ that converges weakly. Without loss of generality, assume that $\{f_{j_k}\}_{k=1}^{\infty}$ converges weakly to $0$. We claim that $\{f_{j_k}\}_{k=1}^{\infty}$ converges strongly to $0$. Let $\varepsilon>0$ and apply the condition to choose $N \in \mathbb{N}$ such that $\int_{X\setminus F_N}|\langle f_{j_k},k_x\rangle_{\mathcal{H}}|^2\,d\mu(x) < \frac{\varepsilon}{2}$ for all $k$. Then
\begin{align*}
    \|f_{j_k}\|_{\mathcal{H}}^2 &= \int_X |\langle f_{j_k},k_x\rangle_{\mathcal{H}}|^2\,d\mu(x)\\
    &= \int_{F_N} |\langle f_{j_k},k_x\rangle_{\mathcal{H}}|^2\,d\mu(x)+\int_{X\setminus F_N} |\langle f_{j_k},k_x\rangle_{\mathcal{H}}|^2\,d\mu(x)\\
    &<\int_{F_N} |\langle f_{j_k},k_x\rangle_{\mathcal{H}}|^2\,d\mu(x)+\frac{\varepsilon}{2}.
\end{align*}
Now, $\int_{F_N} |\langle f_{j_k},k_x\rangle_{\mathcal{H}}|^2\,d\mu(x) \leq \mu(F_N)\sup_{f\in\mathcal{F}}\|f\|_{\mathcal{H}}$, and therefore, we may use the fact that the $f_{j_k}$ converge weakly to $0$ and apply the the dominated convergence theorem to choose $K \in \mathbb{N}$ such that $\int_{F_N}|\langle f_{j_k},k_x\rangle_{\mathcal{H}}|^2\,d\mu(x) < \frac{\varepsilon}{2}$ for all $k>K$. This establishes the result.
\end{proof}

\begin{proof}[Proof of Theorem \ref{HilbertFrameRK2}]

Consider the operators $T_{F_n}:\mathcal{H}\rightarrow\mathcal{H}$ given by
$$
    T_{F_n}f=\int_{F_n}\langle f,k_x\rangle_{\mathcal{H}}k_x\,d\mu(x).
$$ 
Note that
$$
    \langle T_{F_n}f-f,f\rangle_{\mathcal{H}}=-\left\langle \int_{X\setminus F_{n}} \langle f,k_x\rangle_{\mathcal{H}} k_x\,d\mu(x), f\right\rangle_{\mathcal{H}}=-\int_{X\setminus F_n}|\langle f,k_x\rangle_{\mathcal{H}}|^2\,d\mu(x).
$$
This identity implies that $\langle T_{F_n}f-f,f\rangle_{\mathcal{H}}\rightarrow 0$ as $n\rightarrow \infty$ for each $f \in \mathcal{H}$ (by the dominated convergence theorem) and that the condition 
$$
    \lim_{n\rightarrow \infty}\sup_{f \in \mathcal{F}} \langle T_{F_n}f-f,f\rangle_{\mathcal{H}}=0
$$
is equivalent to 
$$
    \lim_{n\rightarrow\infty}\sup_{f\in\mathcal{F}}\int_{X\setminus F_{n}}|\langle f,k_x\rangle_{\mathcal{H}}|^2\,d\mu(x)=0.
$$
Appealing to \cite{GM2021}*{Proposition 1 and Theorem 1} by the hypotheses of on the frame $\{k_x\}$, each $T_{F_n}$ is compact on $\mathcal{H}$. The theorem follows upon applying Theorem \ref{HilbertRK} with $T_n=T_{F_n}$.

\end{proof}

\begin{proof}[Proof of Theorem \ref{BanachFrameRK1}]
The proof is very similar to the proof of Theorem \ref{HilbertFrameRK1}. We omit the details of the forward direction.


To show the reverse direction. Let $\{f_j\}_{j=1}^{\infty}\subseteq \mathcal{F}$. Since $\mathcal{F}$ is bounded and since $\mathcal{X}$ is reflexive, there exists a subsequence $\{f_{j_k}\}_{k=1}^{\infty}$ that converges weakly. Without loss of generality, assume that $\{f_{j_k}\}_{k=1}^{\infty}$ converges weakly to $0$. We claim that $\{f_{j_k}\}_{k=1}^{\infty}$ converges strongly to $0$. Let $\varepsilon>0$ and apply the condition to choose $N \in \mathbb{N}$ such that $\int_{X\setminus F_N}|\langle f_{j_k},f_x^*\rangle|^p\,d\mu(x) < c^p\frac{\varepsilon}{2}$ for all $k$. Then
\begin{align*}
    \|f_{j_k}\|_{\mathcal{X}}^p &\leq \frac{1}{c^p}\int_X |\langle f_{j_k},f_x^*\rangle|^p\,d\mu(x)\\
    &= \frac{1}{c^p}\int_{F_N} |\langle f_{j_k},f_x^*\rangle|^p\,d\mu(x)+\frac{1}{c^p}\int_{X\setminus F_N} |\langle f_{j_k},f_x^*\rangle|^p\,d\mu(x)\\
    &<\frac{1}{c^p}\int_{F_N} |\langle f_{j_k},f_x^*\rangle|^p\,d\mu(x)+\frac{\varepsilon}{2}.
\end{align*}
Since $\mathcal{F}$ is bounded, $\int_{F_N} |\langle f_{j_k},f_x^*\rangle|^p\,d\mu(x) \leq \mu(F_N)(\sup_{x\in X}\|f_x^*\|_{\mathcal{X}\rightarrow\mathbb{C}}\sup_{f \in \mathcal{F}}\|f\|_{\mathcal{X}})^p$. Therefore, we may use the fact that the $f_{j_k}$ converge weakly to $0$ and apply the the dominated convergence theorem to choose $K \in \mathbb{N}$ such that $\int_{F_N}|\langle f_{j_k},f_x^*\rangle|^p\,d\mu(x) < c^p\frac{\varepsilon}{2}$ for all $k>K$. This establishes the result.
\end{proof}

\begin{proof}[Proof of Theorem \ref{BanachMeanValueRK}]
The forward direction follows from making slight modifications to the argument in the proof of Theorem \ref{HilbertFrameRK1}. We omit the details.

To show the reverse direction, let $\{f_n\}_{n=1}^{\infty}\subseteq \mathcal{F}$ be a sequence. Consider $\{D_1f_n\chi_{F_{m}}\}_{n=1}^{\infty}\subseteq C(F_m)$ for a fixed $m \in \mathbb{N}$. 
This sequence is equicontinuous and pointwise uniformly bounded (due to hypotheses (ii) and (iii)), and therefore has a subsequence that converges in the uniform norm on $F_m$ by the Arzel\`a-Ascoli theorem. 
Denote the uniformly convergent subsequence $\{D_1f_{n_k}\chi_{F_m}\}_{k=1}^{\infty}$. 

Repeating the above argument with the sequence $\{D_2f_{n_k}\chi_{F_m}\}_{k=1}^{\infty}$ we obtain indices (which we also denote by $n_k$) such that $\{D_1f_{n_k}\chi_{F_m}\}_{k=1}^{\infty}$ and $\{D_2f_{n_k}\chi_{F_m}\}_{k=1}^{\infty}$ converge uniformly on $F_m$. Repeating this process $N$ times, we obtain a subsequence $\{f_{n_k}\}_{k=1}^{\infty}$ of $\{f_n\}_{n=1}^{\infty}$ such that $\{D_jf_{n_k}\chi_{F_m}\}_{k=1}^{\infty}$ converges uniformly on $F_m$ for each $1\leq j\leq N$. A standard diagonalization argument then allows one to extract a further subsequence (again denoted by $\{f_{n_k}\}_{k=1}^{\infty}$) such that $\{D_jf_{n_k}\chi_{F_m}\}_{k=1}^{\infty}$ converges uniformly on $F_m$ for all $1\leq j\leq N$ and all 
$m \in \mathbb{N}$. Since $\{D_{N+1}f_{n_k}(x_0)\}_{k=1}^{\infty}$ is a bounded sequence of complex numbers, it has a convergent subsequence (also indexed by $n_k$) by the Bolzano-Weierstrass theorem. Iterating this argument $M$ times yields a subsequence $\{f_{n_k}\}_{k=1}^{\infty}$ of $\{f_n\}_{n=1}^{\infty}$ such that $\{D_j(f_{n_k})\chi_{F_m}\}_{k=1}^{\infty}$ converges uniformly on $F_m$ for all $1\leq j\leq N$ and all 
$m \in \mathbb{N}$ and $\{D_{j}f_{n_k}(x_0)\}_{k=1}^{\infty}$ converges for all $N+1\leq j \leq N+M$.

We claim that $\{f_{n_k}\}_{k=1}^{\infty}$ is Cauchy in $\mathcal{X}$. To see this, let $\varepsilon >0$. By hypothesis, we may choose $M_0 \in \mathbb{N}$ such that $\sum_{j=1}^{N}\int_{X\setminus F_{M_0}}|D_jf_{n_k}|^p\,d\mu<\frac{\varepsilon}{3\cdot 2^{p+1}}$ for all $k \in \mathbb{N}$. Let $N_0 \in \mathbb{N}$ be such that for all $k,\ell >N_0$, $|D_jf_{n_k}(x)-D_jf_{n_{\ell}}(x)|<\left(\frac{\varepsilon}{3N\mu(F_{M_0})}\right)^{1/p}$ for all $x \in F_{M_0}$ and all $1 \leq j \leq N$, and $|D_jf_{n_k}(x_0)-D_jf_{n_{\ell}}(x_0)|< \left(\frac{\varepsilon}{3M}\right)^{1/p}$ for all $N+1\leq j\leq N+M$. For $k,\ell>N_0$, we have 
\begin{align*}
	\|f_{n_k}-f_{n_{\ell}}\|_{\mathcal{X}}^p&=\sum_{j=1}^N\int_{X\setminus F_{M_0}}|D_j(f_{n_k})-D_j(f_{n_{\ell}})|^p\,d\mu+\sum_{j=1}^N\int_{F_{M_0}}|D(f_{n_k})-D_j(f_{n_\ell})|^p\,d\mu\\
	&\quad\quad+\sum_{j=N+1}^{N+M}|D_jf_{n_k}(x_0)-D_jf_{n_{\ell}}(x_0)|^p\\
	&< 2^p\sum_{j=1}^N\int_{X\setminus F_{M_0}}|D_j(f_{n_k})|^p\, d\mu+2^p\sum_{j=1}^N\int_{X\setminus F_{M_0}}|D_j(f_{n_\ell})|^p\,d\mu\\
	&\quad\quad+\sum_{j=1}^N\int_{F_{M_0}}\left(\frac{\varepsilon}{3N\mu(F_{M_0})}\right)\,d\mu +\sum_{j=N+1}^{N+M}\left(\frac{\varepsilon}{3M}\right)\\
	&<2^p\frac{\varepsilon}{3\cdot 2^{p+1}}+2^p\frac{\varepsilon}{3\cdot 2^{p+1}}+N\frac{\varepsilon}{3N\mu(F_{M_0})}\mu(F_{M_0})+M\frac{\varepsilon}{3M} = \varepsilon.
\end{align*}
This proves our claim and therefore establishes the theorem.
\end{proof}

\subsection{Weighted Besov-Sobolev space compactness characterization}\label{BesovSobolevSubsection}


We begin by establishing that the weighted Besov-Sobolev spaces, $\mathcal{B}_{\sigma}^{p,J}(D)$, are in fact Banach spaces. 

\begin{prop}\label{BanachSpace} If $D\subseteq \mathbb{C}^n$ is a bounded domain, $p \in [1,\infty)$, $J \in \mathbb{N}$, and $\sigma$ is an integrable weight, then $\mathcal{B}_{\sigma}^{p,J}(D)$ is a Banach space. 
\begin{proof}
It suffices to prove completeness. Suppose $\{f_j\}_{j=1}^{\infty}$ is Cauchy in  $\mathcal{B}_{\sigma}^{p,J}(D)$. We claim that $\{f_j\}_{j=1}^{\infty}$ is uniformly Cauchy on compact subsets of $D$. Assuming the claim, then $\{f_j\}_{j=1}^{\infty}$ converges pointwise to a function $g$, and since the convergence is uniform on compact subsets, $g$ is holomorphic and we also have convergence of the derivatives. Moreover, for each $\alpha$ with $|\alpha|=J,$ we know that $\{\frac{\partial^\alpha f_j}{\partial z^\alpha}\}_{j=1}^{\infty}$ converges in $L^p_\sigma(D)$ to a function $h_\alpha$. We also know that $\{\frac{\partial^\alpha f_j}{\partial z^\alpha}\}_{j=1}^{\infty}$ converges pointwise to $\frac{\partial^\alpha g}{\partial z^\alpha},$ so in fact $h_\alpha= \frac{\partial^\alpha g}{\partial z^\alpha}.$ It follows that $\{f_j\}_{j=1}^{\infty}$ converges to $g$ in $\mathcal{B}_{\sigma}^{p,J}(D).$

 It remains to establish the claim. We first consider the special case where $D$ is a star domain with respect to the point $z_0$. Fix a compact set $K\subseteq D$. 
 Note that for each $\alpha$ with $|\alpha|=J$ and any $z \in K$, we have, by hypothesis \eqref{BergmanInequalityCondition}, that
 $$
    \left| \frac{ \partial^\alpha f_j}{\partial z^\alpha}(z)-\frac{ \partial^\alpha f_k}{\partial z^\alpha}(z)\right|^p \leq C_{K,\sigma,p} \int_{D} \left| \frac{\partial^\alpha f_j}{\partial z^\alpha}-\frac{\partial^\alpha f_k}{\partial z^\alpha}\right|^p \sigma \mathop{dV}. 
 $$
 Since the right hand side is independent of $z$ and vanishes as $j,k\rightarrow\infty$, we conclude that for each $\alpha$ with $|\alpha|=J$, the sequence $\{\frac{\partial^\alpha f_j}{\partial z^\alpha}\}_{j=1}^{\infty}$ is uniformly Cauchy on $K.$

 Next, take a multi-index $\beta$ with $|\beta|=J-1.$ Note that our assumption implies the sequence of complex numbers $\{\frac{\partial^\beta f_j}{\partial z^\beta}(z_0)\}_{j=1}^{\infty}$ is Cauchy. For ease of notation, let 
 $$
     \frac{\partial^\beta f_j}{\partial z^\beta}-\frac{\partial^\beta f_k}{\partial z^\beta}:=F_{j,k}^{\beta}=G_{j,k}^\beta+\mathrm{i} H_{j,k}^\beta,
 $$ 
 where $G$ and $H$ are real valued functions. We estimate as follows, applying the real-variable mean value theorem on $\mathbb{C}^n=\mathbb{R}^{2n}$ (here $\nabla$ denotes the real gradient):

 \begin{align*}
 & \left|\frac{\partial^\beta f_j}{\partial z^\beta}(z)-\frac{\partial^\beta f_k}{\partial z^\beta}(z)\right|  = \left|F_{j,k}^\beta(z)\right|\\
 & \quad\leq |G_{j,k}^\beta(z)|+ |H_{j,k}^\beta(z)|\\
 & \quad\leq |G_{j,k}^\beta(z_0)| + \sup_{w \in K}|\nabla G(w)||z-z_0|+ |H_{j,k}^\beta(z_0)| + \sup_{w \in K}|\nabla H(w)||z-z_0|\\
 & \quad\lesssim |F_{j,k}^\beta(z_0)|+\sup_{w \in K} \left(\sum_{|\alpha|=J} \left| \frac{\partial^\alpha f_j}{\partial z^\alpha}(w)-\frac{\partial^{\alpha} f_k}{\partial z^{\alpha}}(w) \right|^2 \right)^{1/2}|z-z_0|\\
 & \quad= \left|\frac{\partial^\beta f_j}{\partial z^\beta}(z_0)-\frac{\partial^\beta f_k}{\partial z^\beta}(z_0) \right|+\sup_{w \in K} \left(\sum_{|\alpha|=J} \left| \frac{\partial^\alpha f_j}{\partial z^\alpha}(w)-\frac{\partial^{\alpha} f_k}{\partial z^{\alpha}}(w) \right|^2 \right)^{1/2}|z-z_0|.
 \end{align*}
 It follows that the sequence of functions $\{\frac{\partial^\beta f_j}{\partial z^\beta}\}_{j=1}^{\infty}$ is uniformly Cauchy on $K$. This argument can be iterated until we finally obtain that $\{f_j\}_{j=1}^{\infty}$  is uniformly Cauchy on $K$. Since $K$ was an arbitrary compact set, we are done. 

 We next prove the claim in the case where $D$ is not a star domain. Fix a compact set $K\subseteq D$, $z \in K,$ and assume without loss of generality that $K$ contains $z_0$ and is path-connected. Because $K$ is compact, it can be covered by finitely many Euclidean balls, where the number of balls depends only on $K.$ Replacing $K$ by a potentially larger compact set, we can assume that $K$ is equal to the finite union of the closed balls.  Let $\gamma$ be a simple path connecting $z \in K$ and $z_0.$  In particular, we can assume that the path $\gamma$ passes through each ball in the finite cover at most once. Construct a piece-wise linear path between $z$ and $z_0$ with finitely many segments by connecting the centers of these balls with the boundary points that $\gamma$ intersects, and this piece-wise linear path remains in $K.$ For each line segment, we can apply the same estimates as above to control $\left|\frac{\partial^\beta f_j}{\partial z^\beta}(\cdot)-\frac{\partial^\beta f_k}{\partial z^\beta}(\cdot)\right|.$ Iterate to obtain
 $$
     \left|\frac{\partial^\beta f_j}{\partial z^\beta}(z)-\frac{\partial^\beta f_k}{\partial z^\beta}(z)\right| \lesssim \left|\frac{\partial^\beta f_j}{\partial z^\beta}(z_0)-\frac{\partial^\beta f_k}{\partial z^\beta}(z_0) \right|+C_K \sup_{w \in K} \left(\sum_{|\alpha|=J} \left| \frac{\partial^\alpha f_j}{\partial z^\alpha}(w)-\frac{\partial^{\alpha} f_k}{\partial z^{\alpha}}(w) \right|^2 \right)^{1/2},
 $$
where $C_K$ is a constant that only depends on $K$. The remainder of the proof is as before. 
\end{proof}
\end{prop}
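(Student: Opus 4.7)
The plan is to establish completeness; the other Banach-space axioms are immediate from the form of the norm. Fix a Cauchy sequence $\{f_j\}_{j=1}^{\infty}$ in $\mathcal{B}^{p,J}_{\sigma}(D)$. The definition of the norm immediately yields two pieces of information: for each multi-index $\alpha$ with $|\alpha|<J$, the scalars $\partial^\alpha f_j(z_0)$ form a Cauchy sequence in $\mathbb{C}$, and for each $\alpha$ with $|\alpha|=J$, the functions $\partial^\alpha f_j$ form a Cauchy sequence in $L^p_\sigma(D)$. I would use these to produce a pointwise limit $g$, show $g$ is holomorphic, and verify $g \in \mathcal{B}^{p,J}_\sigma(D)$ with $\|f_j-g\|_{\mathcal{B}^{p,J}_\sigma(D)}\to 0$.

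The essential step is upgrading both facts to uniform Cauchy convergence on compact subsets $K \subseteq D$ for every derivative of order at most $J$. For the top-order derivatives I would apply the Bergman-type estimate \eqref{BergmanInequalityCondition} to the holomorphic differences $\partial^\alpha f_j-\partial^\alpha f_k$ to obtain
$$\sup_{z\in K}\bigl|\partial^\alpha f_j(z)-\partial^\alpha f_k(z)\bigr|^p \le C_{K,p,\sigma}\int_D |\partial^\alpha f_j-\partial^\alpha f_k|^p \sigma\, dV,$$
whose right-hand side is Cauchy-small by hypothesis. To descend to lower orders, I would fix $\beta$ with $|\beta|=J-1$, connect $z_0$ to any $z\in K$ by a path in $D$, and apply the real-variable mean value theorem (to the real and imaginary parts of $\partial^\beta f_j-\partial^\beta f_k$) to bound this difference by $|\partial^\beta f_j(z_0)-\partial^\beta f_k(z_0)|$ plus a constant times $\sup_{w\in K}\sum_{|\alpha|=J}|\partial^\alpha f_j(w)-\partial^\alpha f_k(w)|$. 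Both contributions are uniformly small by the previous step and the Cauchy property at $z_0$, giving uniform Cauchy-ness on $K$ for $|\beta|=J-1$. Iterating down to $|\beta|=0$ settles all derivatives.

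Once uniform convergence on compacta of all derivatives up to order $J$ is in hand, Weierstrass-type theorems produce a holomorphic limit $g:D\to\mathbb{C}$ whose derivatives coincide with the uniform-on-compacta limits of $\partial^\alpha f_j$. By completeness of $L^p_\sigma(D)$ and uniqueness of limits, the top derivatives $\partial^\alpha f_j$ also converge in $L^p_\sigma(D)$ to $\partial^\alpha g$, so $g$ lies in $\mathcal{B}^{p,J}_\sigma(D)$ and $f_j\to g$ in the norm.

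The hard part will be the geometric step of bounding the length of a path from $z_0$ to an arbitrary $z\in K$ that stays in $D$, uniformly in $z$ and by a constant depending only on $K$, since $D$ is merely a bounded domain. This is trivial when $D$ is star-shaped with respect to $z_0$ (use the straight segment from $z_0$ to $z$), so I would handle that case first. For a general bounded domain I would enlarge $K$ to a path-connected compact set containing $z_0$ and realized as a finite union of closed Euclidean balls lying inside $D$; then any point of the enlarged set can be joined to $z_0$ by a piecewise-linear path through the ball centers, and the number of segments depends only on the chosen cover. This produces the uniform constant needed in the iteration and closes the argument.
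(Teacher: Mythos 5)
Your proposal is correct and follows essentially the same route as the paper's proof: the top-order derivatives are controlled uniformly on compacta via the pointwise estimate \eqref{BergmanInequalityCondition}, lower-order derivatives are recovered by iterating the real-variable mean value theorem along paths from $z_0$, and the general (non-star-shaped) case is handled exactly as in the paper by enlarging $K$ to a finite union of closed balls and using piecewise-linear paths through their centers. No substantive differences to report.
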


\begin{proof}[Proof of Theorem \ref{BesovSobolevRK}]

 We apply Theorem \ref{BanachMeanValueRK}. In the notation of Theorem \ref{BanachMeanValueRK}, we have $\mathcal{X}=\mathcal{B}_{\sigma}^{p,J}(D)$, $d$ is the Euclidean metric, and $\mu=B$ is the induced Lebesgue measure.  We can choose $\{F_m\}_{m=1}^{\infty}$ to be any compact exhaustion of $D$ and $x_0=z_0 \in D$ any arbitrary fixed point with respect to which we compute the $\mathcal{B}^{p,J}_{\sigma}$ norm. In this case, the maps $D_j$ for $1 \leq j \leq N$ are the $J^{\text{th}}$ order complex partial derivatives, that is, the maps $f \mapsto \partial^\alpha f$ where $|\alpha|=J$ (in particular, there are $N=\binom{n+J-1}{n-1}$ such maps). The maps $D_j$ for $N+1 \leq j \leq N+M$ are all the lower order derivatives, that is, all the maps $f \mapsto \partial^\beta f$ with $|\beta|<J.$ Given a bounded set $\mathcal{F}\subseteq \mathcal{B}_{\sigma}^{p,J}$, it only remains to establish conditions (ii) and (iii) of Theorem \ref{BanachMeanValueRK}.

We deal with condition (iii) first. We claim that for each $1 \leq j \leq N+M+1$, the collection of functions $\{D_j f\}_{f \in \mathcal{F}}$ is uniformly bounded on each compact set $F_m.$ Since $\mathcal{F}$ is bounded, we put $A:=\sup_{f \in \mathcal{F}} \|f\|_{\mathcal{B}^{p,J}_{\sigma}(D)}$. Condition \eqref{BergmanInequalityCondition} implies the uniform estimate
$$
\sup_{f \in \mathcal{F}} \sup_{w \in F_m}\left|\frac{\partial^\alpha f}{\partial z^\alpha}(w)\right| \leq C_{m,p,\sigma} A
$$
for all multi-indices $\alpha$ with $|\alpha|=J.$
 Note that we also trivially have that for any multi-index $\beta$ with $|\beta|<J,$ $\sup_{f \in \mathcal{F}}\left|\frac{\partial^{\beta} f}{\partial z^\beta}(z_0) \right| \leq A.$ This implies that for all such $\beta$, the collection of functions $\{\frac{\partial^{\beta} f}{\partial z^\beta}\}_{f \in \mathcal{F}}$ is uniformly bounded on $F_m.$   In particular, using an argument very similar to the one in Proposition \ref{BanachSpace}, we can show that when $\beta$ is any multi-index with $|\beta|=|\alpha|-1$, the following estimate holds uniformly for $f \in \mathcal{F}$ and $z \in F_m:$  
 $$
    \left| \frac{\partial{^\beta} f_j}{\partial z^\beta}(z)\right| \lesssim  \left| \frac{\partial^\beta f_j}{\partial z^\beta}(z_0)\right|+ \sup_{w \in K} \left( \sum_{|\alpha|=J} \left|\frac{\partial^\alpha f_j}{\partial z^\alpha}(w)\right|^2\right)^{1/2}|z-z_0| \leq C_{m,p,\sigma}A.
$$
This argument can be iterated to obtain the result for all multi-indices $\beta$ with $|\beta|<J.$ This establishes the uniform boundedness of the collection $\{D_j f\}_{f \in \mathcal{F}}$ on each compact $F_m.$ The argument also shows that $F_m$ can be replaced by an arbitrary compact set (with potentially a different constant). In particular, this establishes condition (iii) of Theorem \ref{BanachMeanValueRK}.

To establish condition (ii), we show that any collection of holomorphic functions on $D$, $\mathcal{G}$, that is uniformly bounded on compact sets is equicontinuous on each $F_m.$ Let $2\delta=\operatorname{dist}(F_m,\partial D)$. Then $F_m \subseteq \overline{D_{\delta}}$ and for any $z \in F_m$, the Euclidean ball $B(z, \delta)$ is contained in $\overline{D_{\delta}}.$ Let $M_1:= \sup_{f \in \mathcal{G}} \sup_{z \in \overline{D_{\delta}}}|f(z)|.$ First, if $f \in \mathcal{G}$ and $|z-w| \geq \frac{\delta}{6}$, we have
$$
    |f(z)-f(w)|  \leq 2M_1 = 2M_1\left(\frac{6}{\delta}\right)\left(\frac{\delta}{6}\right)  \leq M_2 |z-w|,
$$
where $M_2:= 2M_1(\frac{6}{\delta}).$
 So it suffices to prove the estimate for $z,w \in F_m$ and $|z-w|<\frac{\delta}{6}.$
 
In this case, notice $B(w,\frac{\delta}{3}) \subseteq B(z, \delta) \subseteq \overline{D_\delta}.$ The mean-value property give that
\begin{align*}
|f(z)-f(w)| & = \left| \frac{1}{V(B(z,\frac{\delta}{3}))}\int_{B(z,\frac{\delta}{3})}f\,dV-\frac{1}{V(B(w,\frac{\delta}{3}))}\int_{B(w,\frac{\delta}{3})}f\,dV\right|\\
&  = \left|\int_{B(z,\delta)}\left(\frac{f(\zeta) \chi_{B(z, \frac{\delta}{3})}(\zeta)}{V(B(z,\frac{\delta}{3}))}-  \frac{f(\zeta) \chi_{B(w, \frac{\delta}{3})}(\zeta)}{V(B(w,\frac{\delta}{3}))}\right) \, dV(\zeta)         \right|\\
& \leq M_1 \int_{B(z,\delta)}\left|\frac{\chi_{B(z, \frac{\delta}{3})}(\zeta)}{V(B(z,\frac{\delta}{3}))}-  \frac{ \chi_{B(w, \frac{\delta}{3})}(\zeta)}{V(B(w,\frac{\delta}{3}))}\right| \, dV(\zeta)\\
& \leq M_1 C_\delta\left(V\left(B\left(z,\frac{\delta}{3}\right) \setminus B\left(w,\frac{\delta}{3}\right) \right)+ V\left(B\left(w,\frac{\delta}{3}\right) \setminus B\left(z,\frac{\delta}{3}\right) \right)\right).
\end{align*}

To control the measures of the symmetric set differences above, note that $B(z,\frac{\delta}{3}-|z-w|) \subseteq B(w, \frac{\delta}{3}) $, and so
$$
   V\left(B\left(z,\frac{\delta}{3}\right) \setminus B\left(w,\frac{\delta}{3}\right) \right) \leq V\left(B\left(z,\frac{\delta}{3}\right) \setminus B\left(z,\frac{\delta}{3}-|z-w|\right) \right)
$$
$$
    = C_n\left( \left(\frac{\delta}{3}\right)^{2n}- \left(\frac{\delta}{3}-|z-w|\right)^{2n}\right) \leq C_{n, \delta} |z-w|.
$$
The other term is obviously handled similiarly.

\end{proof}



\begin{proof}[Proof of Corollary \ref{RadialBergmanSpaces}]
Note the condition $t>-1$ guarantees that $(-\rho)^t$ is integrable, see \cite{LS2012}*{Lemma 4.1}. We will apply Theorem \ref{BesovSobolevRK} to $\mathcal{B}_{\sigma}^{p,J}(D)$ with $J=0$ and $\sigma=(-\rho)^t.$

 We only need to verify that the weight  $\sigma=(-\rho)^t$ satisfies condition \eqref{BergmanInequalityCondition}. Let $K \subseteq D$ be compact. Note that there exist a radius $r_K$ and a compact set $K'$ with $K \subseteq K' \subseteq D$ such that for any $z \in K,$ the Euclidean ball $B(z,r_K) \subseteq K'.$ We then have, using the mean value property for holomorphic functions and H\"{o}lder's Inequality 
\begin{align*}
|f(z)| & \leq \frac{1}{ V(B(z,r_K))}\int_{B(z,r_K)} |f(w)| \mathop{dV(w)}\\
& \leq \frac{1}{\left(\inf_{\zeta \in K'} (-\rho(\zeta))^t \right) V(B(z,r_K))}\int_{B(z,r_K)} |f(w)| (-\rho(w))^t \mathop{dV(w)}\\
& \leq C_{K,t} \int_{D} |f(w)| (-\rho(w))^t \mathop{dV(w)}\\
& \leq C_{K,t} \left(\int_{D} (-\rho)^t \mathop{dV} \right)^{1/p'}  \left(\int_{D} |f|^p (-\rho)^t \mathop{d V}\right)^{1/p},
\end{align*}
so condition \eqref{BergmanInequalityCondition} is satisfied. 
\end{proof}




\begin{proof}[Proof of Corollary \ref{BPBergmanSpaces}]
We will apply Theorem \ref{BesovSobolevRK} to $\mathcal{B}_{\sigma}^{p,J}(D)$ with $J=0$ and $\sigma \in B_p.$ It is clear that $\sigma$ is a finite measure. We only must verify that $\sigma$ satisfies condition \eqref{BergmanInequalityCondition}. Let $K$ and $r_K$ be defined as in the proof of Corollary \ref{RadialBergmanSpaces}. Then we have, for $z \in K,$ using H\"{o}lder's inequality and the fact that $\sigma^{-1/(p-1)}$ is integrable on $D$:
\begin{align*}
|f(z)| & \leq \frac{1}{ V(B(z,r_K))}\int_{B(z,r_K)} |f(w)| \mathop{dV(w)}\\
& \leq C_K \int_{D} |f(w)| \mathop{dV(w)}\\
& \leq C_{K} \left(\int_{D} |f|^p \sigma  \mathop{dV}\right)^{1/p} \left(\int_{D} \sigma^{-1/(p-1)} \mathop{dV}\right)^{1/p'} \\
& \leq C_{K,p,\sigma} \left(\int_{D} |f|^p \sigma \mathop{d V}\right)^{1/p},
\end{align*}
so condition \eqref{BergmanInequalityCondition} is satisfied.
\end{proof}

\begin{proof}[Proof of Corollary \ref{HardySpaces}]
This is a straightforward application of Theorem \ref{BesovSobolevRK}.
\end{proof}

\begin{proof}[Proof of Corollary \ref{DirichletSpaces}]
This is a straightforward application of Theorem \ref{BesovSobolevRK}.
\end{proof}

\section{Applications of compactness criteria}\label{BergmanSection}


\subsection{Compactness of Toeplitz operators on the Bergman space} 
As before, let $\mathbb{B}_n$ denote the unit ball in $\mathbb{C}^n$ and let $\mathcal{A}^2(\mathbb{B}_n)$ denote the usual Bergman space on the unit ball, which is a reproducing kernel Hilbert space. The reproducing kernels and normalized reproducing kernels 
are respectively given by 
$$
    K_w(z)=\frac{1}{(1- z\overline{w})^{n+1}}\quad \text{and}\quad
    k_w(z)=\frac{(1-|w|^2)^{\frac{n+1}{2}}}{(1- z\overline{w})^{n+1}}.
$$
Recall that the ``$p$-normalized" reproducing kernels given by 
$$
    k_w^{(p)}(z):=\frac{K(z,w)}{\|K_w\|_{\mathcal{A}^2(\mathbb{B}_n)}^{2/p'}}= \frac{(1-|w|^2)^{\frac{n+1}{p'}}}{(1-z\overline{w})^{n+1}},
$$
and that the hyperbolic measure on $\mathbb{B}_n$ is given by 
$$
    d\lambda(z):=\frac{dv(z)}{(1-|z|^2)^{n+1}}.
$$
Let $\varphi_z$ denote the M\"obius transformation on $\mathbb{B}_n$ that interchanges $z$ and $0$. The 
Bergman metric on $\mathbb{B}_n$, $\beta$, is given by 
$$
    \beta(z,w):=\frac{1}{2}\log\left(\frac{1+|\varphi_z(w)|}{1-|\varphi_z(w)|}\right).
$$

For $z \in \mathbb{B}_n$ and $r>0$, set $D(z,r):=\{w\in \mathbb{B}_n: \beta(z,w)<r\}$. It is well-known that for any $r>0$, there exists $C_r>0$ such that 
\begin{align}\label{ComparableBergmannorms}
C_r^{-1}\leq \frac{\|K_z\|_{\mathcal{A}^2(\mathbb{B}_n)}}{\|K_w\|_{\mathcal{A}^2(\mathbb{B}_n)}}\leq C_r
\end{align}
for all $z,w \in \mathbb{B}_n$ with $\beta(z,w)<r$, see \cite{Zhu}*{Lemma 2.20}. It is also well-known that
$$
    \lambda(D(z,r))=\lambda(D(w,r))
$$
for all $z,w \in \mathbb{B}_n$ and $r>0$, see \cite{Zhu}*{Lemma 1.24}.


The following corollary is immediate from Theorem \ref{BanachFrameRK1} or Theorem \ref{BesovSobolevRK} (see Remark \ref{BergmanRemark}).
\begin{cor}\label{Bergmancompactnessdef}
Let $p \in (1,\infty)$ and $T$ be a bounded operator on $\mathcal{A}^p(\mathbb{B}_n)$. Then $T$ is compact on $\mathcal{A}^p(\mathbb{B}_n)$ if and only if 
$$
    \lim_{R\rightarrow \infty}\sup_{\substack{f \in \mathcal A^p(\mathbb{B}_n)\\ \|f\|_{\mathcal A^p(\mathbb{B}_n)\leq 1}}}\int_{\mathbb{B}_n\setminus D(0,R)}|\langle Tf,k_w^{(p')}\rangle_{\mathcal A^2(\mathbb{B}_n)}|^p\,d\lambda(w)=0.
$$
\end{cor}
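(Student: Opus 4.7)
The plan is to apply Theorem~\ref{BanachFrameRK1} to the reflexive Banach space $\mathcal{X}=\mathcal{A}^p(\mathbb{B}_n)$ equipped with the continuous frame $(\{k_w^{(p)}\},\{k_w^{(p')}\})$ with respect to $L^p(\mathbb{B}_n,d\lambda)$ recorded in Remark~\ref{BergmanRemark}. As a preliminary step I would verify the four frame axioms: the uniform bound $\sup_{w\in\mathbb{B}_n}\|k_w^{(p')}\|_{\mathcal{A}^p\to\mathbb{C}}<\infty$ (which reduces to the standard pointwise estimate for functions in $\mathcal{A}^p$), the norm equivalence $\|\langle f,k_w^{(p')}\rangle\|_{L^p(d\lambda)}\simeq\|f\|_{\mathcal{A}^p}$, and the reproducing identity $f=\int_{\mathbb{B}_n}\langle f,k_w^{(p')}\rangle k_w^{(p)}\,d\lambda(w)$. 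These are classical consequences of the weighted Bergman reproducing formula together with Forelli--Rudin type estimates.

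For the exhaustion, I would take $F_R:=D(0,R)$. Because $D(0,R)$ coincides with the Euclidean ball of radius $\tanh R<1$, its hyperbolic measure $\lambda(D(0,R))=\int_{|w|<\tanh R}(1-|w|^2)^{-(n+1)}\,dv(w)$ is finite, and a sequence $R_n\uparrow\infty$ produces an exhaustion of $\mathbb{B}_n$ by sets of finite $\lambda$-measure, as required by Theorem~\ref{BanachFrameRK1}.

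The remaining step is the routine translation from operator compactness to precompactness of an image. Set $\mathcal{F}_T:=T(\{f\in\mathcal{A}^p(\mathbb{B}_n):\|f\|_{\mathcal{A}^p}\leq 1\})$. Then $T$ is compact if and only if $\mathcal{F}_T$ is precompact, and $\mathcal{F}_T$ is bounded since $T$ is bounded. Applying Theorem~\ref{BanachFrameRK1} to $\mathcal{F}_T$ with the exhaustion $\{F_{R_n}\}$ shows that precompactness of $\mathcal{F}_T$ is equivalent to
$$
    \lim_{n\to\infty}\sup_{\|f\|_{\mathcal{A}^p}\leq 1}\int_{\mathbb{B}_n\setminus D(0,R_n)}|\langle Tf,k_w^{(p')}\rangle|^p\,d\lambda(w)=0.
$$
Since the integrand over $\mathbb{B}_n\setminus D(0,R)$ is monotone decreasing in $R$, the sequential limit is equivalent to the continuous limit as $R\to\infty$ appearing in the statement.

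I do not anticipate any serious obstacle: the corollary is essentially a repackaging of Theorem~\ref{BanachFrameRK1} for the Banach frame of $p$-normalized Bergman kernels. The only nontrivial ingredient is the verification of the frame axioms for $(\{k_w^{(p)}\},\{k_w^{(p')}\})$, and this is already invoked without further proof in Remark~\ref{BergmanRemark}; the corresponding norm equivalence reduces to well-known $L^p(d\lambda)$ bounds on the Bergman projection and its associated kernel integrals on the ball.
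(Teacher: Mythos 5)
Your proposal is correct and matches the paper's route: the paper derives this corollary directly from Theorem~\ref{BanachFrameRK1} applied to the continuous frame $(\{k_w^{(p)}\},\{k_w^{(p')}\})$ of Remark~\ref{BergmanRemark}, with exactly the reduction from compactness of $T$ to precompactness of the image of the unit ball that you describe. The only remark worth adding is that the frame verification is even simpler than you suggest, since $\langle f,k_w^{(p')}\rangle_{\mathcal{A}^2(\mathbb{B}_n)}=(1-|w|^2)^{(n+1)/p}f(w)$ by the reproducing property, so the norm condition is an exact isometry $\int_{\mathbb{B}_n}|\langle f,k_w^{(p')}\rangle|^p\,d\lambda(w)=\|f\|_{\mathcal{A}^p(\mathbb{B}_n)}^p$ and no Forelli--Rudin estimates are needed beyond the standard pointwise bound giving $\sup_w\|k_w^{(p')}\|_{\mathcal{A}^p\to\mathbb{C}}<\infty$.
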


\begin{prop}\label{Bergmancompactnesstest}
Let $p \in (1,\infty)$ and $T$ be a bounded operator on $\mathcal{A}^p(\mathbb{B}_n)$ such that 
\begin{align}\label{Bergmanboundedrows}
    \sup_{z \in \mathbb{B}_n}\int_{\mathbb{B}_n}|\langle T^*k_z,k_w\rangle_{\mathcal{A}^2(\mathbb{B}_n)}|\frac{\|K_z\|_{\mathcal{A}^2(\mathbb{B}_n)}^{1-\frac{2\delta}{p'(n+1)}}}{\|K_w\|_{\mathcal{A}^2(\mathbb{B}_n)}^{1-\frac{2\delta}{p'(n+1)}}}\,d\lambda(w)<\infty
\end{align}
for some $\delta \in \left(0,\min(p,p')\right)$. If
\begin{align}\label{Bergmandecayingcolumns}
\lim_{R\rightarrow\infty}\sup_{z \in \mathbb{B}_n}\int_{\mathbb{B}_n\setminus D(0,R)}|\langle Tk_z,k_w\rangle_{\mathcal{A}^2(\mathbb{B}_n)}|\frac{\|K_z\|_{\mathcal{A}^2(\mathbb{B}_n)}^{1-\frac{2\delta}{p(n+1)}}}{\|K_w\|_{\mathcal{A}^2(\mathbb{B}_n)}^{1-\frac{2\delta}{p(n+1)}}}\,d\lambda(w)=0,
\end{align}
then $T$ is compact on $\mathcal{A}^p(\mathbb{B}_n)$. 
\end{prop}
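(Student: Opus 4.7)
The plan is to verify the criterion of Corollary~\ref{Bergmancompactnessdef}, that is, to show
\[
\lim_{R\to\infty}\sup_{\|f\|_{\mathcal{A}^p(\mathbb{B}_n)}\le 1}\int_{\mathbb{B}_n\setminus D(0,R)}\bigl|\langle Tf,k_w^{(p')}\rangle_{\mathcal{A}^2(\mathbb{B}_n)}\bigr|^{p}\,d\lambda(w)=0.
\]
The first step is to realize the quantity being estimated as the $L^p(d\lambda)$ image of $f$ under a fixed integral operator. Using the continuous frame reproducing formula $f=\int_{\mathbb{B}_n}\langle f,k_z^{(p')}\rangle_{\mathcal{A}^2} k_z^{(p)}\,d\lambda(z)$ and the linearity of $T$, one gets
\[
\langle Tf,k_w^{(p')}\rangle_{\mathcal{A}^2}=\int_{\mathbb{B}_n}\langle f,k_z^{(p')}\rangle_{\mathcal{A}^2}\,G(w,z)\,d\lambda(z),
\]
where
\[
G(w,z):=\langle Tk_z^{(p)},k_w^{(p')}\rangle_{\mathcal{A}^2}=\|K_z\|_{\mathcal{A}^2}^{1-2/p'}\|K_w\|_{\mathcal{A}^2}^{1-2/p}\langle Tk_z,k_w\rangle_{\mathcal{A}^2}.
\]
Since the map $f\mapsto\langle f,k_z^{(p')}\rangle_{\mathcal{A}^2}$ is an isometric embedding of $\mathcal{A}^p(\mathbb{B}_n)$ into $L^p(d\lambda)$, it suffices to prove that the integral operator $\widetilde S_R$ on $L^p(d\lambda)$ with kernel $\chi_{\mathbb{B}_n\setminus D(0,R)}(w)\,G(w,z)$ satisfies $\|\widetilde S_R\|_{L^p(d\lambda)\to L^p(d\lambda)}\to 0$ as $R\to\infty$.

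I would bound $\|\widetilde S_R\|$ via a two-weight Schur test with weights of the form $\phi_1(z)=\|K_z\|_{\mathcal{A}^2}^{-s_1}$ and $\phi_2(w)=\|K_w\|_{\mathcal{A}^2}^{-s_2}$, for parameters $s_1,s_2$ to be chosen in terms of $p,\delta$, and $n$. Substituting the explicit form of $G$, and using $\langle T^*k_z,k_w\rangle=\overline{\langle Tk_w,k_z\rangle}$ together with a change of variables, hypothesis~\eqref{Bergmanboundedrows} is designed to yield the Schur row condition
\[
\int_{\mathbb{B}_n}|G(w,z)|\phi_1(z)^{p'}\,d\lambda(z)\le A\,\phi_2(w)^{p'}
\]
uniformly in $w\in\mathbb{B}_n$, while hypothesis~\eqref{Bergmandecayingcolumns} yields the column decay condition
\[
\int_{\mathbb{B}_n\setminus D(0,R)}|G(w,z)|\phi_2(w)^{p}\,d\lambda(w)\le\varepsilon(R)\,\phi_1(z)^{p},\qquad\varepsilon(R)\xrightarrow[R\to\infty]{}0.
\]
The two-weight Schur test then delivers $\|\widetilde S_R\|_{L^p(d\lambda)\to L^p(d\lambda)}\le A^{1/p'}\varepsilon(R)^{1/p}\to 0$, completing the proof.

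The main technical obstacle is the coordinated choice of $s_1$ and $s_2$ that makes both Schur conditions simultaneously valid. The asymmetric exponents $\alpha:=1-2\delta/(p'(n+1))$ in~\eqref{Bergmanboundedrows} and $\alpha':=1-2\delta/(p(n+1))$ in~\eqref{Bergmandecayingcolumns} are arranged precisely so as to absorb the correction factors $\|K_z\|_{\mathcal{A}^2}^{(2-p)/p}$ and $\|K_w\|_{\mathcal{A}^2}^{(2-p')/p'}$ intrinsic to $G$, and the constraint $\delta\in(0,\min(p,p'))$ is what keeps the admissible range of $(s_1,s_2)$ nonempty. The pointwise lower bound $\|K_z\|_{\mathcal{A}^2}\ge 1$ on $\mathbb{B}_n$ is used to convert the two hypotheses into the sharp form required by the Schur test, and the bookkeeping of these power comparisons is where the bulk of the work will lie.
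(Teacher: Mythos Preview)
Your proposal is correct and follows the paper's approach essentially line for line: the paper also invokes Corollary~\ref{Bergmancompactnessdef}, expands $\langle Tf,k_w^{(p')}\rangle$ via the reproducing formula, and then applies H\"older's inequality with a weight that is a power of $\|K_z\|/\|K_w\|$---this is exactly the Schur test you describe, carried out by hand. One small simplification you can make: the paper does not use a genuine two-weight test or the bound $\|K_z\|\ge 1$; it commits from the start to the single weight $h(z,w)=(\|K_w\|/\|K_z\|)^{b/p'}$ with $b=\tfrac{2}{p}-\tfrac{2\delta}{p'(n+1)}$, and then the ``row'' integral collapses directly to \eqref{Bergmanboundedrows} while the ``column'' integral collapses (after Fubini and absorbing the factors $\|K_z\|^{1-2/p'}\|K_w\|^{1-2/p}$ hidden in $G$) to \eqref{Bergmandecayingcolumns}, so no extra bookkeeping with free parameters $s_1,s_2$ is required.
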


\begin{proof}
We verify the condition of Corollary \ref{Bergmancompactnessdef}. Let $f \in \mathcal{A}^p(\mathbb{B}_n)$ with $\|f\|_{\mathcal{A}^p(\mathbb{B}_n)}\leq 1$. Then, by Fubini's theorem
\begin{align*}
    \langle Tf, k_w^{(p')}\rangle_{\mathcal{A}^2(\mathbb{B}_n)} &=\left\langle T\left(\int_{\mathbb{B}_n} \langle f,k_z\rangle_{\mathcal{A}^2(\mathbb{B}_n)} k_z\,d\lambda(z)\right), k_w^{(p')}\right\rangle_{\mathcal{A}^2(\mathbb{B}_n)}\\
    &= \int_{\mathbb{B}_n} \langle f,k_z^{(p')}\rangle_{\mathcal{A}^2(\mathbb{B}_n)}\langle Tk_z^{(p)},k_w^{(p')}\rangle_{\mathcal{A}^2(\mathbb{B}_n)}\,d\lambda(z).
\end{align*}
By H\"older's inequality and the assumption \eqref{Bergmanboundedrows}, we have
\begin{align*}
    |\langle Tf,&k_w^{(p')}\rangle_{\mathcal{A}^2(\mathbb{B}_n)}|^p\leq \left(\int_{\mathbb{B}_n}|\langle f,k_z^{(p')}\rangle_{\mathcal{A}^2(\mathbb{B}_n)}||\langle Tk_z^{(p)},k_w^{(p')}\rangle_{\mathcal{A}^2(\mathbb{B}_n)}|\,d\lambda(z)\right)^p\\
    &\leq  \left(\int_{\mathbb{B}_n} |\langle Tk_z^{(p)},k_w^{(p')}\rangle_{\mathcal{A}^2(\mathbb{B}_n)}|\frac{\|K_w\|_{\mathcal{A}^2(\mathbb{B}_n)}^{\frac{2}{p}-\frac{2\delta}{p'(n+1)}}}{\|K_z\|_{\mathcal{A}^2(\mathbb{B}_n)}^{2-\frac{2}{p'}-\frac{2\delta}{p'(n+1)}}}\,d\lambda(z)\right)^{\frac{p}{p'}}\\
    &\quad\quad\times\left(\int_{\mathbb{B}_n}|\langle Tk_z^{(p)},k_w^{(p')}\rangle_{\mathcal{A}^2(\mathbb{B}_n)}||\langle f,k_z^{(p')}\rangle_{\mathcal{A}^2(\mathbb{B}_n)}|^p\left(\frac{\|K_z\|_{\mathcal{A}^2(\mathbb{B}_n)}^{\frac{2}{p}-\frac{2\delta}{p'(n+1)}}}{\|K_w\|_{\mathcal{A}^2(\mathbb{B}_n)}^{2-\frac{2}{p'}-\frac{2\delta}{p'(n+1)}}}\right)^{\frac{p}{p'}}\,d\lambda(z)\right)\\
    &\leq C^{\frac{p}{p'}} \int_{\mathbb{B}_n}|\langle Tk_z^{(p)},k_w^{(p')}\rangle_{\mathcal{A}^2(\mathbb{B}_n)}||\langle f,k_z^{(p')}\rangle_{\mathcal{A}^2(\mathbb{B}_n)}|^p\left(\frac{\|K_z\|_{\mathcal{A}^2(\mathbb{B}_n)}^{\frac{2}{p}-\frac{2\delta}{p'(n+1)}}}{\|K_w\|_{\mathcal{A}^2(\mathbb{B}_n)}^{2-\frac{2}{p'}-\frac{2\delta}{p'(n+1)}}}\right)^{\frac{p}{p'}}\,d\lambda(z),
\end{align*}
where $C$ is the finite constant from \eqref{Bergmanboundedrows}. Therefore, for any $R>0$, we have that \\$\int_{\mathbb{B}_n\setminus D(0,R)} |\langle Tf,k_w^{(p')}\rangle_{\mathcal{A}^2(\mathbb{B}_n)}|^p\,d\lambda(w)$ can be controlled above by
\begin{align*}
    &C^{\frac{p}{p'}}\int_{\mathbb{B}_n\setminus D(0,R)}\int_{\mathbb{B}_n}|\langle Tk_z^{(p)},k_w^{(p')}\rangle_{\mathcal{A}^2(\mathbb{B}_n)}||\langle f,k_z^{(p')}\rangle_{\mathcal{A}^2(\mathbb{B}_n)}|^p\left(\frac{\|K_z\|_{\mathcal{A}^2(\mathbb{B}_n)}^{\frac{2}{p}-\frac{2\delta}{p'(n+1)}}}{\|K_w\|_{\mathcal{A}^2(\mathbb{B}_n)}^{2-\frac{2}{p'}-\frac{2\delta}{p'(n+1)}}}\right)^{\frac{p}{p'}}\,d\lambda(z)d\lambda(w)\\
    &\quad\quad=C^{\frac{p}{p'}} \int_{\mathbb{B}_n}|\langle f,k_z^{(p')}\rangle_{\mathcal{A}^2(\mathbb{B}_n)}|^p\int_{\mathbb{B}_n\setminus D(0,R)} |\langle Tk_z,k_w\rangle_{\mathcal{A}^2(\mathbb{B}_n)}|\frac{\|K_z\|_{\mathcal{A}^2(\mathbb{B}_n)}^{1-\frac{2\delta}{p(n+1)}}}{\|K_w\|_{\mathcal{A}^2(\mathbb{B}_n)}^{1-\frac{2\delta}{p(n+1)}}}\,d\lambda(w)d\lambda(z).
\end{align*}

Let $\varepsilon>0$ be given. Apply the assumption \eqref{Bergmandecayingcolumns} to get a constant $R>0$ such that $\int_{\mathbb{B}_n\setminus D(0,R)} |\langle Tk_z,k_w\rangle_{\mathcal{A}^2(\mathbb{B}_n)}|\frac{\|K_z\|_{\mathcal{A}^2(\mathbb{B}_n)}^{1-\frac{2\delta}{p(n+1)}}}{\|K_w\|_{\mathcal{A}^2(\mathbb{B}_n)}^{1-\frac{2\delta}{p(n+1)}}}\,d\lambda(w) < \frac{\varepsilon}{C^{\frac{p}{p'}}}$ for all $z\in \mathbb{B}_n$. Then 
$$
    \int_{\mathbb{B}_n\setminus D(0,R)}|Tf|^p\,dv=\int_{\mathbb{B}_n\setminus D(0,R)}|\langle Tf,k_w^{(p')}\rangle_{\mathcal{A}^2(\mathbb{B}_n)}|^p\,d\lambda(w) < C^{\frac{p}{p'}}\frac{\varepsilon}{C^{\frac{p}{p'}}}\int_{\mathbb{B}_n}|f|^p\,dv \leq \varepsilon.
$$
Therefore $T$ is compact by Corollary \ref{Bergmancompactnessdef}.
\end{proof}

We now characterize the compact operators within a class of bounded and localized operators on $\mathcal{A}^p(\mathbb{B}_n)$. 
\begin{thm}\label{Bergmancompactoperatorcharacterization}
Let $p \in (1,\infty)$ and $T$ be a bounded operator on $\mathcal{A}^p(\mathbb{B}_n)$ satisfying \eqref{Bergmanboundedrows} and 
\begin{align}\label{Bergmandecayingcomplements}
    \lim_{R\rightarrow\infty}\sup_{z\in \mathbb{B}_n}\int_{\mathbb{B}_n\setminus D(z,R)}|\langle Tk_z,k_w\rangle_{\mathcal{A}^2(\mathbb{B}_n)}|\frac{\|K_z\|_{\mathcal{A}^2(\mathbb{B}_n)}^{1-\frac{2\delta}{p(n+1)}}}{\|K_w\|_{\mathcal{A}^2(\mathbb{B}_n)}^{1-\frac{2\delta}{p(n+1)}}}\,d\lambda(w)=0.
\end{align}
Then $T$ is compact on $\mathcal{A}^p(\mathbb{B}_n)$ if and only if $\widetilde{T}(z)\rightarrow 0$ as $z\rightarrow 1^-$. 
\end{thm}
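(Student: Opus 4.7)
For the forward direction, I would use a standard weak-to-strong argument. The normalized kernels $k_z^{(p)}$ have unit $\mathcal{A}^p$-norm and tend to zero pointwise on $\mathbb{B}_n$ as $|z|\to 1^-$; testing against the dense set of finite linear combinations of Bergman kernels then gives that $k_z^{(p)}\to 0$ weakly in $\mathcal{A}^p(\mathbb{B}_n)$. Compactness of $T$ on $\mathcal{A}^p$ implies $\|Tk_z^{(p)}\|_{\mathcal{A}^p}\to 0$. The conjugate-exponent identity $2/p+2/p'=2$ yields
\begin{equation*}
\widetilde{T}(z)=\frac{\langle TK_z,K_z\rangle_{\mathcal{A}^2}}{\|K_z\|_{\mathcal{A}^2}^2}=\langle Tk_z^{(p)},k_z^{(p')}\rangle_{\mathcal{A}^2},
\end{equation*}
so H\"older's inequality combined with $\|k_z^{(p')}\|_{\mathcal{A}^{p'}}=1$ gives $|\widetilde{T}(z)|\le\|Tk_z^{(p)}\|_{\mathcal{A}^p}\to 0$.

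For the reverse direction, the plan is to verify condition \eqref{Bergmandecayingcolumns} of Proposition~\ref{Bergmancompactnesstest}, which together with the standing hypothesis \eqref{Bergmanboundedrows} yields compactness of $T$ on $\mathcal{A}^p(\mathbb{B}_n)$. Set $\alpha:=1-\tfrac{2\delta}{p(n+1)}$. Given $\varepsilon>0$, appeal to \eqref{Bergmandecayingcomplements} to select $R_0>0$ with
\begin{equation*}
\sup_{z\in\mathbb{B}_n}\int_{\mathbb{B}_n\setminus D(z,R_0)}|\langle Tk_z,k_w\rangle|\frac{\|K_z\|^{\alpha}}{\|K_w\|^{\alpha}}\,d\lambda(w)<\frac{\varepsilon}{2}.
\end{equation*}
Decompose $\mathbb{B}_n\setminus D(0,R)$ into its intersections with $D(z,R_0)$ and with $\mathbb{B}_n\setminus D(z,R_0)$; the contribution of the second piece is bounded by $\varepsilon/2$. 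By the Bergman triangle inequality the first piece is empty whenever $\beta(0,z)\le R-R_0$, so for large $R$ only $z$ close to $\partial\mathbb{B}_n$ matter. On $D(z,R_0)$ the ratio $\|K_z\|/\|K_w\|$ is bounded by a constant $C_{R_0}$ thanks to \eqref{ComparableBergmannorms}, so matters reduce to showing
\begin{equation*}
J(z):=\int_{D(z,R_0)}|\langle Tk_z,k_w\rangle|\,d\lambda(w)\longrightarrow 0 \quad\text{as}\quad |z|\to 1^-.
\end{equation*}

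The main obstacle is proving $J(z)\to 0$ using only $\widetilde{T}(z)\to 0$ and the localization hypotheses. My plan is to exploit the M\"obius covariance of the reproducing kernel: the unitary weighted composition operator $U_z$ on $\mathcal{A}^2(\mathbb{B}_n)$ associated with the involution $\varphi_z$ satisfies $U_z k_0=c_z k_z$ for a unimodular constant $c_z$, so the conjugate $T^{(z)}:=U_z^* T U_z$ has $\widetilde{T^{(z)}}(0)=\widetilde{T}(z)\to 0$. The change of variables $w=\varphi_z(u)$ then rewrites $J(z)$ as an integral over the fixed hyperbolic ball $D(0,R_0)$ involving matrix entries $|\langle T^{(z)}k_0,k_u\rangle|$. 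The conjugated family $\{T^{(z)}\}$ inherits the localization bounds \eqref{Bergmanboundedrows}--\eqref{Bergmandecayingcomplements} with constants independent of $z$; along any sequence $z_k\to\partial\mathbb{B}_n$, a normal-families/weak-operator extraction then produces a limit operator $T^{(\infty)}$ satisfying $\widetilde{T^{(\infty)}}(0)=0$, and combined with the inherited localization this forces the integrand of $J(z_k)$ to vanish uniformly on the compact set $D(0,R_0)$, yielding $J(z_k)\to 0$ and completing the verification.
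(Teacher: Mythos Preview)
Your forward direction and the overall scaffolding of the reverse direction (verify \eqref{Bergmandecayingcolumns}, split $\mathbb{B}_n\setminus D(0,R)$ into the pieces $D(z,R_0)$ and its complement, kill the far piece with \eqref{Bergmandecayingcomplements}) match the paper's argument exactly. The divergence is entirely in how you handle the near piece $J(z)=\int_{D(z,R_0)}|\langle Tk_z,k_w\rangle|\,d\lambda(w)$.

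The paper does not try to prove $J(z)\to 0$ from scratch; it invokes the result from \cite{IMW2015} that the vanishing Berezin hypothesis already implies the uniform off-diagonal decay
\[
\lim_{|z|\to 1^-}\sup_{w\in D(z,R)}|\langle Tk_z,k_w\rangle_{\mathcal{A}^2(\mathbb{B}_n)}|=0\qquad\text{for each fixed }R>0,
\]
which, combined with $\lambda(D(z,R_0))=\lambda(D(0,R_0))$ and \eqref{ComparableBergmannorms}, immediately kills the near piece.

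Your proposed route to $J(z)\to 0$ has a genuine gap. After conjugating to $T^{(z)}=U_z^*TU_z$ and extracting a weak-operator limit $T^{(\infty)}$, you only obtain $\widetilde{T^{(\infty)}}(0)=0$, i.e.\ the vanishing of a single diagonal matrix entry $\langle T^{(\infty)}k_0,k_0\rangle$. This does \emph{not} force $\langle T^{(\infty)}k_0,k_u\rangle=0$ for $u\in D(0,R_0)$, even under localization hypotheses: localization controls the size of off-diagonal entries at large hyperbolic distance, not their vanishing on a fixed compact ball. The actual argument in the literature requires showing that the limit $T^{(\infty)}$ is M\"obius-invariant (so that its Berezin transform vanishes \emph{everywhere}, not just at $0$) and then invoking injectivity of the Berezin transform on a suitable operator class --- neither step is present in your sketch. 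There is also a secondary issue: the weights $\|K_u\|^\alpha/\|K_v\|^\alpha$ in \eqref{Bergmanboundedrows}--\eqref{Bergmandecayingcomplements} do not transform cleanly under M\"obius conjugation (an extra factor $\bigl(|1-\langle z,\varphi_z(u)\rangle|/|1-\langle z,v\rangle|\bigr)^{\alpha(n+1)}$ appears), so the claim that $T^{(z)}$ inherits these bounds with constants independent of $z$ is not obvious and would need justification.
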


\begin{proof}
The forward direction is clear (in particular, use the well-known fact that the p-normalized kernels $k_z^{(p)}$ converge weakly to $0$ in $\mathcal{A}^p(\mathbb{B}_n)$ as $z \rightarrow 1^-$), so we only consider the reverse direction. We will verify condition \eqref{Bergmandecayingcolumns} and establish the theorem by applying Proposition \ref{Bergmancompactnesstest}. We will use the following condition which is implied by the vanishing Berezin transform hypothesis (see \cite{IMW2015}): for each $R>0$, we have 
\begin{align}\label{WeakBerezinCondition}
    \lim_{z\rightarrow 1^-}\sup_{w \in D(z,R)}|\langle Tk_z,k_w\rangle_{\mathcal{A}^2(\mathbb{B}_n)}|=0.
\end{align}

Let $\varepsilon >0$ be given. Apply assumption \eqref{Bergmandecayingcomplements} to find $R_0>0$ such that 
$$
    \int_{\mathbb{B}_n\setminus D(z,R_0)}|\langle Tk_z,k_w\rangle_{\mathcal{A}^2(\mathbb{B}_n)}|\frac{\|K_z\|_{\mathcal{A}^2(\mathbb{B}_n)}^{1-\frac{2\delta}{p(n+1)}}}{\|K_w\|_{\mathcal{A}^2(\mathbb{B}_n)}^{1-\frac{2\delta}{p(n+1)}}}\,d\lambda(w)<\frac{\varepsilon}{2}
$$
for all $z \in \mathbb{B}_n$. For such $R_0$, we next use condition \eqref{WeakBerezinCondition} to get $N_0>0$ such that \\$|\langle Tk_z,k_w\rangle_{\mathcal{A}^2(\mathbb{B}_n)}|<\frac{\varepsilon}{2 C_{R_0}^{1-\frac{2\delta}{p(n+1)}}\lambda(D(0,R_0))}$ for all $z \in \mathbb{B}_n\setminus D(0,N_0)$ and $w \in D(z,R_0)$, where $C_{R_0}$ is the constant given in \eqref{ComparableBergmannorms}. Set $R=N_0+R_0$. 

Write
\begin{align*}
    \int_{\mathbb{B}_n\setminus D(0,R)} &|\langle Tk_z,k_w\rangle_{\mathcal{A}^2(\mathbb{B}_n)}|\frac{\|K_z\|_{\mathcal{A}^2(\mathbb{B}_n)}^{1-\frac{2\delta}{p(n+1)}}}{\|K_w\|_{\mathcal{A}^2(\mathbb{B}_n)}^{1-\frac{2\delta}{p(n+1)}}}\,d\lambda(w)\\
    &= \int_{\mathbb{B}_n\setminus (D(0,R)\cup D(z,R_0))} |\langle Tk_z,k_w\rangle_{\mathcal{A}^2(\mathbb{B}_n)}|\frac{\|K_z\|_{\mathcal{A}^2(\mathbb{B}_n)}^{1-\frac{2\delta}{p(n+1)}}}{\|K_w\|_{\mathcal{A}^2(\mathbb{B}_n)}^{1-\frac{2\delta}{p(n+1)}}}\,d\lambda(w)\\
    &\quad\quad+\int_{(\mathbb{B}_n\setminus D(0,R))\cap D(z,R_0)} |\langle Tk_z,k_w\rangle_{\mathcal{A}^2(\mathbb{B}_n)}|\frac{\|K_z\|_{\mathcal{A}^2(\mathbb{B}_n)}^{1-\frac{2\delta}{p(n+1)}}}{\|K_w\|_{\mathcal{A}^2(\mathbb{B}_n)}^{1-\frac{2\delta}{p(n+1)}}}\,d\lambda(w).
\end{align*}
The first term above is controlled by the choice of $R_0$:
$$
    \int_{\mathbb{B}_n\setminus (D(0,R)\cup D(z,R_0))} |\langle Tk_z,k_w\rangle_{\mathcal{A}^2(\mathbb{B}_n)}|\frac{\|K_z\|_{\mathcal{A}^2(\mathbb{B}_n)}^{1-\frac{2\delta}{p(n+1)}}}{\|K_w\|_{\mathcal{A}^2(\mathbb{B}_n)}^{1-\frac{2\delta}{p(n+1)}}}\,d\lambda(w)<\frac{\varepsilon}{2}.
$$
For the second term, we only need to consider the case when $(\mathbb{B}_n \setminus D(0,R)) \cap D(z,R_0) \neq \emptyset$. If $w \in (\mathbb{B}_n \setminus D(0,R)) \cap D(z,R_0)$, then $d(z,0)\ge d(w,0)-d(z,w)\ge R- R_0 = N_0$. Thus we can control the second term by choice of $N_0$:
\begin{align*}
    \int_{(\mathbb{B}_n \setminus D(z_0,R)) \cap D(z,R_0)}& |\langle Tk_z,k_w\rangle_{\mathcal{A}^2(\mathbb{B}_n)}|\frac{\|K_z\|_{\mathcal{A}^2(\mathbb{B}_n)}^{1-\frac{2\delta}{p(n+1)}}}{\|K_w\|_{\mathcal{A}^2(\mathbb{B}_n)}^{1-\frac{2\delta}{p(n+1)}}}\,d\lambda(w)\\
    &< C_{R_0}^{1-\frac{2\delta}{p(n+1)}}\lambda(D(z,R_0))\frac{\varepsilon}{2C_{R_0}^{1-\frac{2\delta}{p(n+1)}}\lambda(D(0,R_0))}\\
    &\leq \frac{\varepsilon}{2}.
\end{align*}
Therefore 
$$
     \int_{\mathbb{B}_n\setminus D(0,R)}|\langle Tk_z,k_w\rangle|\frac{\|K_z\|_{\mathcal{A}^2(\mathbb{B}_n)}^{1-\frac{2\delta}{p(n+1)}}}{\|K_w\|_{\mathcal{A}^2(\mathbb{B}_n)}^{1-\frac{2\delta}{p(n+1)}}}\,d\lambda(w)<\frac{\varepsilon}{2}+\frac{\varepsilon}{2}=\varepsilon,
$$
completing the proof.
\end{proof}

\begin{proof}[Proof of Theorem \ref{ToeplitzCompactness}]
Conditions \eqref{Bergmanboundedrows} and \eqref{Bergmandecayingcomplements} are satisfied for $T$ by \cite{IMW2015}*{Proposition 2.2} (via the vanishing Rudin-Forelli estimates). We pass to the case where $T$ is a finite sum of finite products of Toeplitz operators using the argument in \cite{IMW2015}*{Proposition 2.3}.
\end{proof}

\subsection{Compactness of little Hankel Operators on the Hardy space}\label{HardySection}



We prove the sufficiency portion of Theorem \ref{HankelCompactness} using Corollary \ref{HardySpaces}. Our proof is different from the usual argument that uses the Stone-Weierstrass theorem and the fact that little Hankel operators with polynomial symbols are of finite rank. Our proof does not use the approximation of $H_g$ by finite rank operators.

\begin{proof}[Proof of Theorem \ref{HankelCompactness}]
Suppose $g \in \text{VMOA}.$ By \cite{Zhu}*{Theorem 5.18}, there exists a function $\tilde{g} \in C(\partial \mathbb{B}_n)$ such that $S(\tilde{g})=g.$ Since $S(g \overline{f})=H_{S(g)}f$ (as densely defined operators)  for any $g \in L^2(\mathbb{B}_n)$  and  $f \in \mathcal{H}^2(\partial \mathbb{B}_n)$, we may assume without loss of generality that $g \in C(\partial \mathbb{B}_n).$ 

 Let $1 \leq \ell \leq n$ be an index and note that for $z \in \mathbb{B}_n,$ 
$$
    \dfrac{\partial}{\partial z_{\ell}}H_g(f)(z)= \int_{\partial \mathbb{B}_n} \frac{\overline{w_{\ell}} g(w) \overline{f(w)}}{(1-z\overline{w})^{n+1}} \mathop{d s(w)}.
$$ 
Using our Corollary \ref{HardySpaces}, it suffices to show that given $\varepsilon>0$, there exists $R$ sufficiently close to $1$ such that
$$
    \sup_{\substack{f \in \mathcal{H}^2(\mathbb{B}_n) \\ \|f\|_{\mathcal{H}^2(\mathbb{B}_n)} \leq 1}} \sum_{\ell=1}^{n} \int_{(R \mathbb{B}_n)^c} (1-|z|^2) \left|\dfrac{\partial}{\partial z_{\ell}} S(g \overline{f})(z) \right| ^2 \mathop{dv(z)}<\varepsilon. 
$$ 

Clearly, it suffices to prove the bound for a fixed index $\ell$ (just replace $\varepsilon$ by $\frac{\varepsilon}{n}$). Take any $f$ in $\mathcal{H}^2(\mathbb{B}_n)$ with $\|f\|_{\mathcal{H}^2(\mathbb{B}_n)} \leq 1.$ Consider a decomposition of $\mathbb{C}^n=\mathbb{R}^{2n}$ into closed cubes with disjoint interiors $Q_j$ with side length $r.$ Let $D_j=Q_j \cap \partial \mathbb{B}_n$. Clearly, these sets are non-empty for only finitely many $j$, and we may assume that $\partial \mathbb{B}_n= \bigcup_{j=1}^{N_r} D_j,$ where the interiors of $D_j$ are pairwise disjoint and $N_r$ is a positive integer that depends on the side length $r$. Also note that the sets $D_j$ have the following finite overlap property: there exists a constant $K$ (independent of $r$) such that each set $D_k$ intersects at most $K$ members of $\{D_j\}_{j=1}^{N_r}$. 
Now, use the (uniform) continuity of $g$ to choose $r$ such that if $z,w \in D_j,$ then $|g(z)-g(w)|< \sqrt{\frac{\varepsilon}{8 C_{\mathcal{H}^2}K N_r^2}},$ where $C_{\mathcal{H}^2}$ is a constant such that
$$
    |f(0)|^2+ \sum_{\ell=1}^n\int_{\mathbb{B}_n} \left|\frac{\partial f}{\partial z_\ell}(z)\right|^2 (1-|z|^2) \mathop{d v(z)} \leq C_{\mathcal{H}^2} \int_{\partial \mathbb{B}_n} |f(\zeta)|^2 \, ds(\zeta)
$$
for all $f \in \mathcal{H}^2(\mathbb{B}_n).$

Define $C_j=\{z \in \mathbb{B}_n: \pi(z) \in D_j \text{ or } z=0\}$, where $\pi(z)=z/|z|$ is the standard radial projection to the boundary. It is straightforward to verify there exists a constant $M_r$ that depends on $r$ such that if $w \in D_j$ and $z \in C_k$ with $D_j \cap D_k= \emptyset,$ then $\frac{1}{|1-z\overline{w}|^{n+1}}<M_r.$ Choose a sample point $z_j \in D_j$ for each $j$. Define the functions $g_j:=g-g(z_j)$ for $1 \leq j \leq N_r$ and use the mean value property to write
\begin{align*}
&\int_{(R \mathbb{B}_n)^c} (1-|z|^2) \left|\int_{\partial \mathbb{B}_n} \frac{\overline{w_\ell} g(w) \overline{f(w)}}{(1-z\overline{w} )^{n+1}} \mathop{ds(w)} \right| ^2 \mathop{dv(z)}\\
& = \sum_{j=1}^{N_r}\int_{(R \mathbb{B}_n)^c\cap C_j} (1-|z|^2) \left|\int_{\partial \mathbb{B}_n} \frac{\overline{w_\ell} g_j(w)\overline{f(w)}}{(1-z\overline{w})^{n+1}} \mathop{ds(w)} \right| ^2 \mathop{dv(z)}\\
 & \leq N_r^2 \sum_{j,k=1}^{N_r}\int_{(R \mathbb{B}_n)^c \cap C_j} (1-|z|^2) \left|\int_{D_k} \frac{\overline{w_\ell} g_j(w)\overline{f(w)}}{(1-z\overline{w})^{n+1}} \mathop{ds(w)} \right| ^2 \mathop{dv(z)}
 \end{align*}
\begin{align*}
& = N_r^2 \sum_{\substack{1 \leq j,k \leq N_r \\  D_j \cap D_k= \emptyset}}\int_{(R \mathbb{B}_n)^c \cap C_j} (1-|z|^2) \left|\int_{D_k} \frac{\overline{w_\ell} g_j(w)\overline{f(w)}}{(1-z\overline{w})^{n+1}} \mathop{ds(w)} \right| ^2 \mathop{dv(z)}\\
& \quad\quad+ N_r^2 \sum_{\substack{1 \leq j,k \leq N_r \\ D_j \cap D_k \neq  \emptyset}}\int_{(R \mathbb{B}_n)^c \cap C_j} (1-|z|^2) \left|\int_{D_k} \frac{\overline{w_\ell} g_j(w)\overline{f(w)}}{(1-z\overline{w})^{n+1}} \mathop{ds(w)} \right| ^2 \mathop{dv(z)}\\
& =: N_r^2[(\text{I}) + (\text{II})].
\end{align*}

We control $(\text{I})$ using the bound on the kernel, the disjointness of the $D_k$, and H\"{o}lder's inequality together with the fact that $\|f\|_{\mathcal{H}^2(\mathbb{B}_n)}$ as follows:
\begin{align*}
(\text{I}) & \leq \sum_{\substack{1 \leq j,k \leq N_r \\  D_j \cap D_k= \emptyset}}\int_{(R \mathbb{B}_n)^c \cap C_j} (1-|z|^2) \left(\int_{D_k} \frac{ |g_j(w)||f(w)|}{|1-z\overline{w}|^{n+1}} \mathop{ds(w)} \right) ^2 \mathop{dv(z)}\\
& \leq  M_r^2 \sum_{j=1}^{N_r}\int_{(R \mathbb{B}_n)^c \cap C_j} (1-|z|^2) \left(\int_{\partial \mathbb{B}_n}  |g_j(w)||f(w)| \mathop{ds(w)} \right) ^2 \mathop{dv(z)}\\
& \leq  M_r^2 \sum_{j=1}^{N_r} \|g_j\|_{\mathcal{H}^2(\mathbb{B}_n)}^2  \int_{(R \mathbb{B}_n)^c \cap C_j}  (1-|z|^2)\mathop{dv(z)}\\
& \leq 4   M_{r}^2 \|g\|_{L^\infty(\partial\mathbb{B}_n)}^2 \int_{(R \mathbb{B}_n)^c}  (1-|z|^2)\mathop{dv(z)}.
\end{align*}
Choosing $R$ sufficiently close to $1$ so that $\int_{(R \mathbb{B}_n)^c}  (1-|z|^2)\mathop{dv(z)} \leq \frac{\varepsilon}{8 N_r^2  M_r^2 \|g\|_{L^{\infty}(\partial\mathbb{B}_n)}^2}$, we deduce that $(\text{I})<\frac{\varepsilon}{2 N_r^2}.$

To control $(\text{II})$, we use the equivalence of $\mathcal{H}^2(\mathbb{B}_n)$ norms and the boundedness of the Szeg\H{o} projection on $L^2(\partial \mathbb{B}_n).$ If $z \in D_k$ and $D_k \cap D_j \neq \emptyset,$ then the continuity of $g$ together with the triangle inequality implies that $|g_j(z)|=|g(z)-g(z_j)|  \leq 2 \sqrt{\frac{\varepsilon}{8 C_{\mathcal{H}^2}K N_r^2}}$. Thus 
\begin{align*}
(\text{II}) & \leq \sum_{\substack{1 \leq j,k \leq N_r \\  D_j \cap D_k \neq  \emptyset}}\int_{\mathbb{B}_n} (1-|z|^2) \left|\dfrac{\partial}{\partial z_\ell} S(\chi_{D_k} g_j \overline{f})(z) \right| ^2 \mathop{dv(z)}\\
& \leq C_{\mathcal{H}^2} \sum_{\substack{1 \leq j,k \leq N_r \\  D_j \cap D_k \neq  \emptyset}}\int_{\partial \mathbb{B}_n} |S(\chi_{D_k}g_j \overline{f})(z)|^2 \mathop{ds(z)}\\
& \leq C_{\mathcal{H}^2} \sum_{\substack{1 \leq j,k \leq N_r \\  D_j \cap D_k \neq  \emptyset}} \int_{D_k} |g_j(z)f(z)|^2 \mathop{d s  (z)}\\
& <\frac{C_{\mathcal{H}^2} K \varepsilon}{2 C_{\mathcal{H}^2} K N_r^2 } \sum_{j=1}^{N_r} \int_{D_j} |f(z)|^2 \mathop{d s (z)}\\
& \leq \frac{\varepsilon}{2 N_r^2}.
\end{align*}

Putting all this together, we deduce that for this choice of $R,$
$$\int_{(R \mathbb{B}_n)^c} (1-|z|^2) \left|\int_{\partial \mathbb{B}_n} \frac{\overline{w_\ell} g(w) \overline{f(w)}}{(1-z\overline{w})^{n+1}} \mathop{ds(w)} \right| ^2 \mathop{dv(z)} < \frac{\varepsilon}{2}+\frac{\varepsilon}{2}=\varepsilon,$$
which completes the proof.

\end{proof}

\section{Acknowledgments}
M. Mitkovski's research is supported in part by National Science Foundation grant DMS \#2000236.

N. A. Wagner's research is supported in part by National Science Foundation grant DGE \#1745038.

B. D. Wick's research is supported in part by National Science Foundation grants DMS \#1800057, \#2054863, \#20000510 and Australian Research Council - DP 220100285.

\begin{bibdiv}
\begin{biblist}


\bib{ARS2008}{article}{
   author={Arcozzi, N.},
   author={Rochberg, R.},
   author={Sawyer, E.},
   title={Carleson measures for the Drury-Arveson Hardy space and other
   Besov-Sobolev spaces on complex balls},
   journal={Adv. Math.},
   volume={218},
   date={2008},
   number={4},
   pages={1107--1180},
   issn={0001-8708},
}
    
\bib{AZ1998}{article}{
    title={Compact operators via the Berezin transform},
    author={S. Axler},
    author={D. Zheng},
    journal={Indiana Univ. Math. J.},
    volume={47},
    date={1998},
    number={2},
    pages={387--400},
}

\bib{AU2020}{article}{
    title={The Kolmogorov-Riesz theorem and some compactness criterions of bounded subsets in weighted variable exponent amalgam and Sobolev spaces},
    author={I. Aydin},
    author={C. Unal},
    journal={Collect. Math.},
    volume={71},
    date={2020},
    number={3},
    pages={349--363},
}

\bib{B1987}{book}{
    title={Theory of linear operations. Translated from the French by F. Jellett. With comments by A. Pelczy\'nski and Cz. Bessaga},
    author={S. Banach},
    series={North-Holland Mathematical Library, 38},
    publisher={North-Holland Publishing Co.},
    address={Amsterdam},
    date={1987},
}

\bib{B2017}{article}{
    title={Compactness criteria in weighted variable Lebesgue spaces},
    author={R. A. Bandaliyev},
    journal={Miskolc Math. Notes},
    volume={18},
    date={2017},
    number={1},
    pages={95--101},
}

\bib{CDLW2019}{article}{
title={Compactness of Riesz transform commutator on stratified Lie groups},
author={P. Chen},
author={X. T. Duong},
author={J. Li},
author={Q. Wu},
journal={J. Funct. Anal.},
volume={277},
date={2019},
number={6},
pages={1639--1676},
}

\bib{CC2013}{article}{
    title={Weighted estimates for Beltrami equations},
    author={A. Clop},
    author={V. Cruz},
    journal={Ann. Acad. Sci. Fenn. Math.},
    volume={38},
    date={2013},
    number={1},
    pages={91--113},
}

\bib{CRW}{article}{
   author={Coifman, R. R.},
   author={Rochberg, R.},
   author={Weiss, Guido},
   title={Factorization theorems for Hardy spaces in several variables},
   journal={Ann. of Math. (2)},
   volume={103},
   date={1976},
   number={3},
   pages={611--635},
}



\bib{DFG2002}{article}{
title={Compactness criteria in function spaces},
author={M. D\"orfler},
author={H. G. Feichtinger},
author={K. Gr\"ochenig},
journal={Colloq. Math.},
volume={94},
date={2002},
number={1},
pages={37--50},
}

\bib{DLMWY2018}{article}{
title={Compactness of Riesz transform commutator associated with Bessel operators},
author={X. T. Duong},
author={J. Li},
author={S. Mao},
author={H. Wu},
author={D. Yang},
journal={J. Anal. Math.},
volume={135},
date={2018},
number={2},
pages={639--673},
}

\bib{F1982}{article}{
    title={A compactness criterion for translation invariant Banach spaces of functions},
    author={H. G. Feichtinger},
    journal={Anal. Math.},
    volume={8},
    date={1982},
    number={3},
    pages={165--172},
}

\bib{F1937}{article}{
    title={Sur les ensembles compacts de fonctions de carr\'es sommables},
    author={M. Fr\'echet},
    journal={Acta Litt. Sci. Szeged},
    volume={8},
    date={1937},
    pages={116--126}
}

\bib{GW1970}{article}{
    title={Compactness criteria for K\"othe spaces},
    author={S. Goes},
    author={R. Welland},
    journal={Math. Ann.},
    volume={188},
    date={1970},
    pages={251--269},
}

\bib{GM2015}{article}{
    title={Almost everything you need to know about relatively compact sets in variable Lebesgue spaces},
    author={P. G\'orka},
    author={A. Macios},
    journal={J. Funct. Anal.},
    volume={269},
    date={2015},
    number={7},
    pages={1925--1949},
}

\bib{GM2014}{article}{
    title={The Riesz-Kolmogorov theorem on metric spaces},
    author={P. G\'orka},
    author={A. Macios},
    journal={Miskolc Math. Notes},
    volume={15},
    date={2014},
    number={2},
    pages={459--465},
}

\bib{GR2016}{article}{
    title={From Arzel\'a-Ascoli to Riesz-Kolmogorov},
    author={P. G\'orka},
    author={H. Rafeiro},
    journal={Nonlinear Anal.},
    volume={144},
    date={2016},
    pages={23--31},
}


\bib{GM2021}{article}{
title={Invertibility of Positive Toeplitz Operators and Associated Uncertainty Principle},
author={A. W. Green},
author={M. Mitkovski},
journal={Arxiv e-prints: 2109.13393},
date={2021}
}

\bib{GZ2020}{article}{
    title={On relatively compact sets in quasi-Banach function spaces},
    author={W. Guo},
    author={G. Zhao},
    journal={Proc. Amer. Math. Soc.},
    volume={148},
    date={2020},
    number={8},
    pages={3359--3373},
}

\bib{HoH2010}{article}{
    author={H. Hanche-Olsen},
    author={H. Holden},
    title={The Kolmogorov-Riesz compactness theorem},
    journal={Expo. Math.},
    volume={28},
    date={2010},
    number={4},
    pages={385--394},
}

\bib{HoHM2019}{article}{
    author={H. Hanche-Olsen},
    author={H. Holden},
    author={E. Malinnikova},
    title={An improvement of the Kolmogorov-Riesz compactness theorem},
    journal={Expo. Math.},
    volume={37},
    date={2019},
    number={1},
    pages={84--91},
}

\bib{Hartman1958}{article}{
title={On completely continuous Hankel matrices},
author={P. Hartman},
journal={Proc. Amer. Math. Soc.},
volume={9},
date={1958},
pages={862--866},
}


\bib{IMW2015}{article}{
    title={Localization and compactness in Bergman and Fock spaces},
    author={J. Isralowitz},
    author={M. Mitkovski},
    author={B. D. Wick},
    journal={Indiana Univ. Math. J.},
    volume={64},
    date={2015},
    number={5},
    pages={1553--1573},
}

\bib{JP2007}{article}{
    title={Uncertainty principles for orthonormal sequences},
    author={P. Jaming},
    author={A. M. Powell},
    journal={J. Funct. Anal.},
    volume={243},
    date={2007},
    number={2},
    pages={611--630},
}

\bib{IK2009}{article}{
title={Compactness of embeddings of Sobolev type on metric measure spaces},
author={I. A. Ivanishko},
author={V. G. Krotov},
journal={Math. Notes},
volume={86},
date={2009},
number={5-6},
pages={775--788}
}

\bib{K1999}{article}{
title={On compactness of embedding for Sobolev spaces defined on metric spaces},
author={A Kalamajska},
journal={Ann. Acad. Sci. Fenn. Math.},
volume={24},
number={1},
pages={123--132},
date={1999}
}


\bib{K1931}{article}{
    author={A. N. Kolmogorov},
    title={\"Uber Kompaktheit der Funktionenmengen bei der Konvergenz im Mittel},
    journal={Nach. Ges. Wiss. Gdtt},
    volume={9},
    date={1931},
    pages={60--63},
}


\bib{LS2012}{article}{
    title={The Bergman projection in $L^p$ for domains with minimal smoothness},
    author={L. Lanzani},
    author={E. M. Stein},
    journal={Illinois J. Math.},
    volume={56},
    date={2012},
    number={1},
    pages={127 -- 154},
}

\bib{LYZ2021}{article}{
    title={Matrix weighted Kolmogorov-Riesz's compactness theorem},
    author={S. Liu},
    author={D. Yang},
    author={C. Zhuo},
    journal={Arxiv e-prints: 2102.01354},
    date={2021}
}

\bib{MSW2013}{article}{
    title={The essential norm of operators on $A^p_{\alpha}(\mathbb{B}_n)$},
    author={M. Mitkovski},
    author={D. Su\'arez},
    author={B. D. Wick},
    journal={Integral Equations Operator Theory},
    volume={75},
    date={2013},
    number={2},
    pages={197--233},
}

\bib{M1983}{book}{
    title={Orlicz spaces and modular spaces},
    author={J. Musielak},
    series={Lecture Notes in Mathematics, 1034},
    publisher={Springer-Verlag},
    address={Berlin},
    date={1983},
}

\bib{P1940}{article}{
    title={On linear transforms},
    author={R. S. Phillips},
    journal={Trans. Am. Math. Soc.},
    volume={48},
    date={1940},
    pages={516--541}
}

\bib{R2009}{article}{
    title={Kolmogorov compactness criterion in variable exponent Lebesgue spaces},
    author={H. Rafeiro},
    journal={Proc. A. Razmadze Math. Inst.},
    volume={150},
    date={2009},
    pages={105--113},
}

\bib{R1933}{article}{
    author={M. Riesz},
    title={Sur les ensembles compacts de fonctions sommables},
    journal={Acta Szeged Sect. Math},
    volume={6},
    date={1933},
    pages={136--142}
}


\bib{S2007}{article}{
    title={The essential norm of operators in the Toeplitz algebra on $A^p(\mathbb{B}_n)$},
    author={D. Su\'arez},
    journal={Indiana Univ. Math. J.},
    volume={56},
    date={2007},
    number={5},
    pages={2185--2232},
}

\bib{S1957}{article}{
    title={Criteria of compactness in function spaces. (Russian)},
    author={V. N. Sudakov},
    journal={Uspehi Mat. Nauk},
    volume={12},
    date={1957},
    number={3},
    pages={221-224},
}

\bib{T1934}{article}{
    title={On the compactness of the function-set by the convergence in mean of general type},
    author={T. Takahashi},
    journal={Studio Math.},
    volume={5},
    date={1934},
    pages={141--150}
}

\bib{U1978}{article}{
title={On the compactness of operators of Hankel type},
author={A. Uchiyama},
journal={Tohoku Math. J. (2)},
volume={30},
date={1978},
number={1},
pages={163--171}
}

\bib{WW2020}{article}{
    title={Weighted $L^p$ Estimates for the Bergman and Szeg\H{o} Projections on Strongly Pseudoconvex Domains with Near Minimal Smoothness},
    author={N. A. Wagner},
    author={B. D. Wick},
    journal={Adv. Math.},
    volume={384},
    date={2021},
    pages={107745, 45 pp},
}

\bib{WX2021}{article}{
   author={Y. Wang},
   author={J. Xia},
   title={Essential commutants on strongly pseudo-convex domains},
   journal={J. Funct. Anal.},
   volume={280},
   date={2021},
   number={1},
   pages={108775, 56 pp.},
}

\bib{W1940}{article}{
    title={L’int\'egration dans les groupes topologiques et ses applications},
    author={A. Weil},
    journal={Herman et Cie., Paris},
    date={1940}
}
	
\bib{Zhu}{book}{
    title={Spaces of holomorphic functions on the unit ball},
    author={K. Zhu},
    edition={Third edition},
    series={Graduate Texts in Mathematics},
    publisher={Springer-Verlag},
    address={New York},
    date={2005},
    volume={226},
    pages={x+271},
}
\end{biblist}
\end{bibdiv}

\end{document}